\numberwithin{equation}{section}
\newtheorem{theorem}{Theorem}[section]
\newtheorem{proposition}{Proposition}[section]
\newtheorem{lemma}{Lemma}[section]
\newtheorem{remark}{Remark}[section]
\newtheorem{corollary}{Corollary}[section]
\newtheorem{definition}{Definition}[section]
\newcommand{\dif}{\,\mathrm{d}}
\def\R{\mathbb{R}}
\def\C{\mathcal{C}}
\def\E{\mathcal E}
\def \p{\partial}
\def \e {\varepsilon}
\def \I {\mathcal I}
\def \O {\Omega}
\def \N {\mathcal{N}}
\DeclareMathOperator{\dive}{div}
\DeclareMathOperator{\tr}{tr}
\DeclareMathOperator{\curl}{curl}
\DeclareMathOperator{\argmin}{argmin}
\def\@seccntformat#1{\@ifundefined{#1@cntformat}%
   {\csname the#1\endcsname\quad}
   {\csname #1@cntformat\endcsname}
}
\author{R\'emy Rodiac, Pa\'ul Ubill\'us}
\address[R\'emy Rodiac]{Université Paris-Saclay, CNRS,  Laboratoire de mathématiques d'Orsay, 91405, Orsay, France}
\email{remy.rodiac@universite-paris-saclay.fr}
\address[Pa\'ul Ubill\'us]{Universite catholique de Louvain, Institut de Recherche en Mathématique et Physique (IRMP), Chemin du Cyclotron 2, 1348 Louvain-la-Neuve, Belgium}
\email{paul.ubillus@uclouvain.be}
\title[Renormalized energies]{Renormalized energies for unit-valued harmonic maps in multiply connected domains}
\date{}
\begin{document}
\begin{abstract}
In this article we derive the expression of \textit{renormalized energies} for unit-valued harmonic maps defined on a smooth bounded domain in \(\R^2\) whose boundary has several connected components. The notion of renormalized energies was introduced by Bethuel-Brezis-H\'elein in order to describe the position of limiting Ginzburg-Landau vortices in simply connected domains. We show here, how a non-trivial topology of the domain modifies the expression of the renormalized energies. We treat the case of Dirichlet boundary conditions and Neumann boundary conditions as well.
\end{abstract}

\maketitle

\section{Introduction}

The motivation for introducing the notion of renormalized energy of unit-valued harmonic maps comes from a topological obstruction. As observed by Bethuel-Brezis-H\'elein in their pioneering work \cite{BBH_1994}, if \(G\subset \R^2\) is a smooth bounded domain and \(g \in \C^1(\p G,\mathbb{S}^1)\), the space
\[H^1_g(G,\mathbb{S}^1):= \{ u \in H^1(G,\mathbb{C}); \tr_{|\p G} u=g ,\ |u|=1 \text{ a.e.}\} \]
can be empty. In order to explain this, we introduce the definition of the topological degree. If \(\Gamma\) is a smooth simple closed curve and if \( g \in \C^1(\Gamma,\mathbb{S}^1)\), the topological degree of \(g\) is defined by
\begin{equation}\label{eq:degree}
\deg(g, \Gamma)=\frac{1}{2\pi}\int_{\Gamma} g \wedge \p_\tau g
\end{equation}
where \(\tau\) is the tangent vector to the curve, oriented anti-clockwise and the wedge product \( \wedge\) is defined by 
\begin{equation*}
a \wedge b =\frac{1}{2i} (\bar{a}b-a\bar{b})=a_1b_2-a_2b_1 \quad \text{ for } a=a_1+ia_2,\ b=b_1+ib_2 \in \mathbb{C}.
\end{equation*}
It can be shown that the topological degree is an integer (see e.g.\ \cite{Nirenberg_1974}). Furthermore the degree can be extended to functions \(g\) in \(H^{\frac12}(\Gamma,\mathbb{S}^1)\) by using formula \eqref{eq:degree}, where the product is understood in the sense of the \(H^{\frac12}-H^{-\frac12}\) duality. This remains integer-valued as was observed in  the appendix of \cite{Boutet-de-Monvel-Berthie_Georgescu_Purice_1991}, see also \cite{Brezis_Nirenberg_1995,Brezis_1997,Brezis_2006}.  In the rest of the paper, unless stated otherwise, \(G\) is a smooth bounded domain which is multiply connected, i.e., \(\pi_1(G)\neq \{0\}\) where \(\pi_1(G)\) is the fundamental group of \(G\). More precisely \(G= \widetilde{G}\setminus \cup_{l=1}^n \overline{\omega}_l\), where  \(n \in \mathbb{N}^*\) and \( \widetilde{G}, \omega_l, l=1,\dots,n\) are simply connected smooth bounded domains. We call \(\Gamma_0=\p \widetilde{G}\) and \(\Gamma_l=\p \omega_l\), \(l=1,\dots,n\). We fix a boundary data \(g\) on \(\p G\), that we assume to be \(\C^1\) for simplicity. Then, we recall

\begin{proposition}\label{prop:Sobolev_empty_or_not}
The space \(H^1_g(G,\mathbb{S}^1)\) is not empty if and only if \( \sum_{i=1}^n \deg( g, \Gamma_l)= \deg(g,\Gamma_0).\)
\end{proposition}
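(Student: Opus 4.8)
The plan is to prove the two implications separately, treating the easier sufficiency by an explicit construction and the necessity by a homological (Stokes) argument.

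For the \emph{if} direction, assume $\sum_{l=1}^n \deg(g,\Gamma_l)=\deg(g,\Gamma_0)$ and write $d_l:=\deg(g,\Gamma_l)$. First I would fix a point $a_l\in\omega_l$ in each hole and set $w(z):=\prod_{l=1}^n\big((z-a_l)/|z-a_l|\big)^{d_l}$, which is a smooth $\mathbb{S}^1$-valued map on $\overline G$ since its singularities lie in the holes, outside $\overline G$. A direct computation of winding numbers gives $\deg(w,\Gamma_l)=d_l$ for $l=1,\dots,n$ and $\deg(w,\Gamma_0)=\sum_{l}d_l=\deg(g,\Gamma_0)$, so $w$ carries exactly the prescribed degree on every boundary component. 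Consequently $g\bar w$ has degree $0$ on each $\Gamma_l$, $l=0,\dots,n$, and hence admits a single-valued lifting $g\bar w=e^{i\chi}$ with $\chi\in \C^1(\p G,\R)$. I would then extend $\chi$ to some $\Phi\in H^1(G,\R)$ (e.g.\ the harmonic extension of its $H^{\frac12}$ trace) and set $u:=w\,e^{i\Phi}$. Since $w$ is smooth with bounded gradient and $e^{i\Phi}\in H^1\cap L^\infty$, the product lies in $H^1(G,\mathbb{S}^1)$, and its trace on $\p G$ equals $w e^{i\chi}=g$; thus $u\in H^1_g(G,\mathbb{S}^1)$, which is therefore nonempty.

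For the \emph{only if} direction, suppose $u\in H^1_g(G,\mathbb{S}^1)$ and consider the $L^2$ vector field $j(u):=u\wedge\nabla u$. Because $|u|=1$ forces $\nabla u_1$ and $\nabla u_2$ to be parallel a.e., one gets $\curl j(u)=2\,\p_1 u_1\,\p_2 u_2-2\,\p_1 u_2\,\p_2 u_1=0$ in $\D'(G)$. Noting that $\int_{\Gamma_l} j(u)\cdot\tau=2\pi\deg(g,\Gamma_l)$ for the anticlockwise tangent $\tau$, I would integrate the vanishing curl over $G$ by Stokes' theorem, taking into account that $\Gamma_0$ inherits the anticlockwise orientation while the inner curves $\Gamma_l$ inherit the clockwise one; this yields $\int_{\Gamma_0}j(u)\cdot\tau-\sum_{l=1}^n\int_{\Gamma_l}j(u)\cdot\tau=0$, that is, $\deg(g,\Gamma_0)=\sum_{l=1}^n\deg(g,\Gamma_l)$.

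The main obstacle is making this last integration by parts rigorous, since $j(u)$ is only $L^2$ and its curl vanishes only distributionally, so the boundary integrals are a priori ill-defined as classical line integrals. I would resolve this by either (i) using that $j(u)\in L^2$ with $\curl j(u)\in L^2$ admits a well-defined tangential trace in $H^{-\frac12}(\p G)$, so that each $\int_{\Gamma_l}j(u)\cdot\tau$ is interpreted in the $H^{\frac12}$--$H^{-\frac12}$ duality exactly as in the extended definition of degree recalled above, with Stokes holding in that pairing; or (ii) approximating $u$ in $H^1(G,\mathbb{S}^1)$ by smooth $\mathbb{S}^1$-valued maps, for which the identity follows from the classical Stokes theorem, and passing to the limit using the continuity of the degree under $H^{\frac12}$ convergence of traces. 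Conceptually both routes encode the homological relation $[\Gamma_0]=\sum_{l=1}^n[\Gamma_l]$ in $H_1(G;\mathbb{Z})$.
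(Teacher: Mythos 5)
Your proposal is correct. A point of comparison worth noting: the paper states this proposition with ``we recall'' and gives no standalone proof, but both halves of your argument coincide with machinery the paper does deploy elsewhere. Your \emph{only if} direction is exactly the identity embedded in the proof of Lemma~\ref{lem:approxiamte_degree}: there, for \(u\in H^1(\O_\rho,\mathbb{S}^1)\), the paper uses that \(\p_x u\wedge\p_y u=0\) almost everywhere (both partial derivatives lie in the one-dimensional space \(\R\,(iu)\), so the Jacobian vanishes) and integrates by parts to get precisely the degree balance \(\deg(u,\Gamma_0)=\sum_i\deg(u,\p B_\rho(a_i))+\sum_l\deg(u,\Gamma_l)\); specialized to \(\O_\rho=G\) (no excised balls) this is your necessity claim, and like you the paper treats the boundary terms as the \(H^{\frac12}\)--\(H^{-\frac12}\) pairings defining the extended degree. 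Your \emph{if} direction, via \(w=\prod_l\bigl((z-a_l)/|z-a_l|\bigr)^{d_l}\) with poles placed in the holes followed by a single-valued lifting of the degree-zero map \(g\bar w\), mirrors the product ansatz \eqref{eq:canocnical_harmonic_map} and the lifting Lemma~\ref{lem:lifting} used in the proofs of the main theorems. Where you go beyond the paper is in flagging and repairing the regularity issue in Stokes' theorem for the merely \(L^2\) field \(j(u)\); both of your fixes are sound --- the \(H^{-\frac12}\) tangential-trace route works because \(j(u)\in L^2\) with \(\curl j(u)=0\), and the approximation route works because smooth \(\mathbb{S}^1\)-valued maps are dense in \(H^1(G,\mathbb{S}^1)\) in dimension two (Schoen--Uhlenbeck, \(p=\dim G\)), combined with the integrality and \(H^{\frac12}\)-continuity of the degree that the paper recalls in the introduction. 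The orientation bookkeeping (all \(\Gamma_l\) anticlockwise, inner components entering with a minus sign in Stokes) matches the paper's conventions.
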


 If \(H^1_g(G,\mathbb{S}^1)=\emptyset\) there is no unit-valued harmonic map with trace \(g\), i.e., there is no critical point of the Dirichlet energy 
\[ E(u)=\frac12 \int_G |\nabla u|^2\]
in the space \(H^1_g(G,\mathbb{S}^1)\). We can then relax the problem of finding a unit-valued harmonic map with trace \(g\) by creating small holes in the domain. More precisely we consider \(k \in \mathbb{N}^*\), \(a_1, \dots,a_k \in G\), \(d_1,\dots,d_k\in \mathbb{Z}\) such that 
\begin{equation}\label{eq:relation_degrees}
\sum_{i=1}^k d_i+\sum_{l=1}^n \deg(g, \Gamma_l)= \deg(g,\Gamma_0). 
\end{equation}
For \(\rho\) small enough so that the balls \(\bar{B}_\rho(a_i)\) are disjoint and included in \(G\), we set 
\begin{equation}\label{eq:Omega_rho}
\O_\rho:= G \setminus \cup_{i=1}^k \bar{B}_\rho(a_i),
\end{equation}
\begin{equation}\label{eq:E_rho}
\E_{g,\rho}:=\{ u \in H_g^1(\O_\rho, \mathbb{S}^1); \tr_{|\p G} u =g; \ \deg(u, \p B_\rho(a_i))=d_i\},
\end{equation}
\begin{equation}\label{eq:min_w_d_rho}
W_{g}^\rho(\{a_i\},\{d_i\}):= \inf_{u \in \E_{g,\rho}} \frac12 \int_{\O_\rho} |\nabla u|^2.
\end{equation}
We can then study the asymptotic behaviour of \( W_{g}^\rho (\{a_i\},\{d_i\})\) as \(\rho \rightarrow 0\) and the convergence of minimizers for \(W_{g}^\rho\) (we will prove in Proposition \ref{prop:existence_Euler_Lagrange} that minimizers exist). When \(G\) is simply connected Bethuel-Brezis-H\'elein proved that
\begin{equation}\label{eq:renormalized_energy_abstract}
W_{g}(\{a_i\},\{d_i\}):=\lim_{\rho \rightarrow 0} \left( W_g^\rho(\{a_i\},\{d_i\})-\pi \left(\sum_{i=1}^k d_i^2 \right)|\log \rho| \right) <+\infty,
\end{equation}
and they gave an expression of \(W_g(\{a_i\},\{d_i\})\) in terms of Green functions with Neumann boundary condition, cf.\ Theorem I.7 in \cite{BBH_1994}. The quantity \(W_g(\{a_i\},\{d_i\})\) is called the \textit{renormalized energy} of the configurations \( (\{a_i\},\{d_i\}) \) (with Dirichlet boundary condition). In \cite{BBH_1994} the authors also related this renormalized energy to another way of relaxing the problem of finding unit-valued harmonic map with a given trace \(g\). They considered the Ginzburg-Landau energy 
\begin{equation}
E_\e(u)= \frac{1}{2}\int_G |\nabla u|^2+\frac{1}{4\e^2}\int_G (1-|u|^2)^2,
\end{equation}
defined in \(H^1_g:=\{  u \in H^1(G,\mathbb{C}); \tr_{|\p G} u=g \}\) and studied the asymptotic behaviour of a family of minimizers \( (u_\e)_\e\) of \(E_\e\) in \(H^1_g\). When \(G\) is star-shaped and \(\deg(g, \p G)=d \neq 0\), they proved that there exist \(d\) points \(a_1,\dots,a_d\) in \(G\), a singular harmonic map \(u_*\in \C^\infty(G \setminus \{a_1,\dots,a_d\}, \mathbb{S}^1)\) such that \(u_*\) has degree \(1\) around each \(a_i\), with \(u_{\e_p} \rightarrow u_* \) in \(\C^1(G \setminus \{a_1,\dots,a_d\})\), up to a subsequence \(\e_p \to 0\) and with the \(a_i\)'s which minimize the renormalized energy \(W_g(\{a_i\},\{d_i=1\})\). This was extended to simply connected domains in \cite{Struwe_1994}, \cite{delPino_Felmer_1998}. Recently, in \cite{Monteil_Rodiac_VanSchatfingen_2020a,Monteil_Rodiac_VanSchatfingen_2020b}, the authors obtained an analogous result where \(\mathbb{S}^1\) is replaced by an arbitrary smooth compact Riemannian manifold \(\mathcal{N}\) and without any assumption on the topology of \(G\). However, in \cite{Monteil_Rodiac_VanSchatfingen_2020a,Monteil_Rodiac_VanSchatfingen_2020b}, the renormalized energy is given by an abstract formula similar to \eqref{eq:renormalized_energy_abstract}. One of the goals of this article is to derive an explicit expression of this renormalized energy when \(\mathcal{N}=\mathbb{S}^1\) and \(G\) is multiply connected.

Another motivation for studying renormalized energies in multiply connected domains is to have a better understanding of the role of the topology in this problem. In recent works \cite{Ignat_Jerrard_2017,Ignat_Jerrard_2020}, Ignat and Jerrard studied a Ginzburg-Landau problem for tangent vector fields defined on smooth closed Riemannian surfaces. In this context, another topological obstruction to the existence of \(H^1\) unit-valued vector fields occurs. This is due to the \(H^1\) version of the Poincar\'e-Hopf theorem, which states that when the genus of the surface is not equal to \(1\) there is no continuous (nor \(H^1\)) vector field of unit norm on the surface. Ignat-Jerrard introduced a renormalized energy and proved that this is the \(\Gamma\)-limit at second order of the Ginzburg-Landau functional they considered. They also showed that, compared to the work \cite{BBH_1994}, new terms appear in the renormalized energy when the genus of the surface is not zero. These terms involve flux-integrals of a limiting singular harmonic map, they depend on the position and of the degrees of the singular points and are constrained to belong to a vorticity-dependent lattice. The topology of a surface is determined by its genus and the number of the connected components of its boundary. Thus, in this article, we are interested in the effect of the number of the connected components of the boundary on the renormalized energy rather than the effect of the genus. We find that, in this case too, new terms  appear and they can also be computed as flux-integrals. As a side remark, we point out that the Ginzburg-Landau energy is used in superconductivity, superfluidity and nonlinear optics. In physics, and in particular in electromagnetic, it is known that the topology of the domain has an effect on the existence of potentials and this can be at the origin of a new phenomenon like, for example, the Ahoronov-Bohm effect \cite{aharonov1959significance}.

We now introduce some definitions in order to state our main results. We call \(\Phi_0\)  the solution to 
\begin{equation}\label{eq:Phi_0}
\left\{
\begin{array}{rcll}
\Delta \Phi_0 &=& 2\pi \sum_{i=1}^kd_i \delta_{a_i} & \text{ in } G,\\
\p_\nu \Phi_0&=& g \wedge \p_\tau g & \text{ on } \p G, \\
\int_{\p G} \Phi_0&=&0,
\end{array}
\right.
\end{equation}
and \(R_0\) the regular part of \(\Phi_0\) given by
\begin{equation}
R_0(x)=\Phi_0(x)-\sum_{i=1}^k d_i \log |x-a_i|.
\end{equation}
We define \(\varphi_l\), \(l=1,\dots,n\) to be the solutions to
\begin{equation}\label{eq:def_varphi_l}
\left\{
\begin{array}{rcll}
\Delta \varphi_l &=&0 &\text{ in } G, \\
\varphi_l&=&1 &\text{ on } \Gamma_l, \\
\varphi_l&=&0 &\text{ on } \Gamma_m, \ m \neq l.
\end{array}
\right.
\end{equation}
For \(g\in \C^1(\p G,\mathbb{S}^1)\) and \(d_1,\dots,d_k\in \mathbb{Z}\) verifying \eqref{eq:relation_degrees} we introduce
\begin{multline}\label{eq:class_Dirichlet}
\I_{g,{d_i}}:=\Bigl\{v\in H^1(G,\mathbb{S}^1); \deg(v,\Gamma_l)=\deg(g,\Gamma_l), \ l=1,\dots,n,  \\
\ \deg (v,\Gamma_0)=\deg(g,\Gamma_0)-\sum_{i=1}^kd_i\Bigr\}
\end{multline}
and we call \(U_{g,d_i}\) a minimizer of \(\frac12 \int_G |\nabla v|^2\) for \(v\in \I_{g,{d_i}}\), i.e.,
\begin{equation}\label{eq:U_g_d}
\frac12 \int_G |\nabla U_{g,{d_i}}|^2=\min_{v \in \I_{g,{d_i}}} \frac12 \int_G |\nabla v|^2.
\end{equation}
We will obtain in the proof of Theorem \ref{th:main1} that such a minimizer exists and is unique up to a phase. 

\begin{theorem}\label{th:main1}
Let \(g\in \C^1(\p G,\mathbb{S}^1)\), \(a_1,\dots,a_k \in G\), \(d_1,\dots,d_k\in \mathbb{Z}\) satisfying \eqref{eq:relation_degrees}. There exists a unique minimizer \(u_\rho\) for the problem \eqref{eq:min_w_d_rho}. There exist a subsequence \(\rho_p \to 0\) and a map \(u_0\in W^{1,q}(G,\mathbb{S}^1)\) for every \(1\leq q<2\) such that, as \(p \to +\infty\), \(u_{\rho_p} \rightarrow u_0\) in \(\C^m_{\text{loc}}(G\setminus \{a_1,\dots,a_k\})\) for all \(m\in \mathbb{N}\). The map \(u_0\) satisfies
\begin{equation}
\left\{
\begin{array}{rcll}
-\Delta u_0&=&|\nabla u_0|^2u_0 &\text{ in } G \setminus \{a_1,\dots,a_k\},\\
u_0&=& g &\text{ on } \p G,
\end{array}
\right.
\end{equation}
and \(u_0\) can be written as 
\begin{equation}\label{eq:canocnical_harmonic_map}
u_0=\prod_{i=1}^k \left(\frac{x-a_i}{|x-a_i|} \right)^{d_i} U_{g,{d_i}} e^{i \psi_g}
\end{equation}
where \(\psi_g\) is a harmonic function in \(G\) and \(U_{g,d_i}\) satisfies \eqref{eq:U_g_d}.
Furthermore,
\begin{equation}
W_g^\rho(\{a_i\},\{d_i\})=\pi \left( \sum_{i=1}^k d_i^2\right) |\log \rho| +W_g(\{a_i\},\{d_i\})+o(1),
\end{equation}
with
\begin{multline}\label{eq:renormalized_Dirichlet}
W_g(\{a_i\},\{d_i\})= -\pi \sum_{i\neq j} d_id_j \log |a_i-a_j|+\frac12 \int_{\p G} \Phi_0(g \wedge \p_\tau g) -\pi \sum_{i=1}^k d_iR_0(a_i) \\
+ \sum_{l=1}^n \int_{\Gamma_l} \alpha_l \p_\tau \Phi_0+\frac12\int_G \left|\sum_{l=1}^n \alpha_l\nabla \varphi_l \right|^2,
\end{multline}
where \(\alpha_l=\alpha_l(g,\{a_i\},\{d_i\})\) are real constants.\\   Besides, there exist \(\theta_l=\theta_l(g,\{a_i\},\{d_i\})\in [-\pi,\pi[\) such that  \(\alpha_l=\theta_l+2\pi \mathbb{Z}\) and the coefficients \(\alpha_l\) are solutions to the linear system
\begin{equation}\label{eq:alpha_l_sol_linear_system}
\sum_{m=1}^n \alpha_m \int_{\Gamma_m} \p_\nu \varphi_l= \int_{\Gamma_l} u_0 \wedge \p_\nu u_0 \quad \text{ for } l=1,\dots,n.
\end{equation}
\end{theorem}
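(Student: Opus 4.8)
My plan is to transplant the Bethuel--Brezis--H\'elein analysis of \cite{BBH_1994} to the multiply connected setting, the new ingredient being a Hodge-type decomposition of the canonical current $j=u_0\wedge\nabla u_0$ that isolates the $n$ cohomologically nontrivial contributions carried by the functions $\varphi_l$. First, for fixed $\rho$ I would solve \eqref{eq:min_w_d_rho} by the direct method: $\mathcal{E}_{g,\rho}$ is nonempty (one builds by hand an $\mathbb{S}^1$-valued competitor realizing the prescribed degrees $d_i$ on $\partial B_\rho(a_i)$ and trace $g$ on $\partial G$), the Dirichlet energy is weakly lower semicontinuous on $H^1(\Omega_\rho,\mathbb{S}^1)$, and both the trace and the degrees on the fixed smooth circles are stable under weak $H^1$-convergence; this yields a minimizer $u_\rho$ solving $-\Delta u_\rho=|\nabla u_\rho|^2u_\rho$. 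For uniqueness I would pass to the current $j_\rho=u_\rho\wedge\nabla u_\rho$: on the perforated domain it is divergence-free, curl-free, with tangential trace $g\wedge\partial_\tau g$ on $\partial G$ and prescribed circulations $2\pi d_i$, $2\pi\deg(g,\Gamma_l)$ around the holes; the set of such fields is affine and $\tfrac12\int|j|^2$ is strictly convex, so $j_\rho$, and hence $u_\rho$ up to a constant phase killed by $g$, is unique.

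Next I would establish the two-sided bound $W_g^\rho=\pi(\sum_i d_i^2)|\log\rho|+O(1)$, the upper bound from an explicit competitor modelled on $\Phi_0$ and the matching lower bound from the standard vortex lower estimates. This uniform control gives $C^m_{\mathrm{loc}}$ compactness away from the $a_i$ (small-energy elliptic estimates for harmonic maps on annuli) and a $W^{1,q}$ bound for $q<2$ from the $1/|x-a_i|$ behaviour of $\nabla u_\rho$, producing the limit $u_0$ and its equation. For the canonical form I would factor out $\prod_i\big(\tfrac{x-a_i}{|x-a_i|}\big)^{d_i}$: the quotient $v:=u_0\prod_i(\ldots)^{-d_i}$ has degree $0$ about each $a_i$, extends to $H^1(G,\mathbb{S}^1)$, and by computing degrees on each $\Gamma_l$ lies in $\mathcal{I}_{g,d_i}$; minimality of $u_\rho$ forces $v$ to minimize the energy over $\mathcal{I}_{g,d_i}$, which gives existence of $U_{g,d_i}$, with uniqueness up to a constant phase again from the convexity of the current problem, and the single-valued harmonic $\psi_g$ is then fixed by the Dirichlet problem restoring the trace $g$ on $\partial G$.

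For the energy expansion I would use $j=u_0\wedge\nabla u_0=\nabla^\perp\Phi_0+\sum_{l=1}^n\alpha_l\nabla\varphi_l$: indeed $j-\nabla^\perp\Phi_0$ is divergence-free, curl-free and tangent to $\partial G$, hence lies in the $n$-dimensional space of harmonic fields spanned by the $\nabla\varphi_l$. Then $\tfrac12\int_{\Omega_\rho}|j|^2$ splits into (i) $\tfrac12\int_{\Omega_\rho}|\nabla\Phi_0|^2$, which by the integration by parts $\tfrac12\int_{\partial\Omega_\rho}\Phi_0\,\partial_\nu\Phi_0$ produces $\pi(\sum_i d_i^2)|\log\rho|$ together with the terms $-\pi\sum_{i\neq j}d_id_j\log|a_i-a_j|-\pi\sum_i d_iR_0(a_i)+\tfrac12\int_{\partial G}\Phi_0(g\wedge\partial_\tau g)$; (ii) the cross term, which integrates by parts to the boundary contribution $\sum_l\int_{\Gamma_l}\alpha_l\,\partial_\tau\Phi_0$; and (iii) the self-energy $\tfrac12\int_G|\sum_l\alpha_l\nabla\varphi_l|^2$, giving \eqref{eq:renormalized_Dirichlet}. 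To pin down the $\alpha_l$ I would compute the normal flux $\int_{\Gamma_l}u_0\wedge\partial_\nu u_0=\int_{\Gamma_l}j\cdot\nu$: since $\nabla^\perp\Phi_0\cdot\nu=-\partial_\tau\Phi_0$ integrates to $0$ over the closed curve $\Gamma_l$, one is left with $\sum_m\alpha_m\int_{\Gamma_l}\partial_\nu\varphi_m$, which by the Green symmetry $\int_{\Gamma_l}\partial_\nu\varphi_m=\int_{\Gamma_m}\partial_\nu\varphi_l$ is exactly \eqref{eq:alpha_l_sol_linear_system}; the matrix is symmetric and definite, so $\alpha_l$ is uniquely determined. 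The quantization $\alpha_l\in\theta_l+2\pi\mathbb{Z}$ follows from the gauge maps $v\mapsto v\,e^{2\pi i\varphi_l}$, which are single-valued, leave the trace and all boundary degrees unchanged, and shift the harmonic part of the current by $2\pi\nabla\varphi_l$; hence $\alpha_l$ is defined only modulo $2\pi$ at the level of $\mathcal{I}_{g,d_i}$ and the minimizer selects one representative.

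The hard part will be the quantitative energy expansion: obtaining the sharp matching upper and lower bounds that yield the precise finite renormalized part $W_g$, while simultaneously upgrading the weak compactness of $(u_\rho)$ to the strong $C^m_{\mathrm{loc}}$ convergence needed to pass the cross term and the self-energy to the limit and to justify that the limiting current carries exactly the quantized, linear-system-determined coefficients $\alpha_l$. Once this convergence and the exact decomposition of $j$ are in hand, the remaining identities are essentially the algebra of Green's identities and the capacity matrix of the $\varphi_l$.
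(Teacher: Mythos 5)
Your overall architecture coincides with the paper's: the decomposition \(j(u_0)=\nabla^\perp\Phi_0+\sum_{l=1}^n\alpha_l\nabla\varphi_l\) (the paper writes the second term as \(\nabla H_0\) with \(H_0=\sum_l\alpha_l\varphi_l\)), the three-term splitting of \(\tfrac12\int_{\O_\rho}|j|^2\), the flux computation with the symmetry \(\int_{\Gamma_l}\p_\nu\varphi_m=\int_{\Gamma_m}\p_\nu\varphi_l\) yielding \eqref{eq:alpha_l_sol_linear_system}, and the modulo-\(2\pi\) quantization are all as in the paper. But two of your steps fail as written. For the uniqueness of \(u_\rho\), the currents of maps in \(\E_{g,\rho}\) do \emph{not} fill the affine set you describe: the pointwise condition \(u=g\) on every component of \(\p G\) is strictly stronger than \(j\cdot\tau=g\wedge\p_\tau g\), and it pins the boundary constants of the harmonic-gradient part of the current to the lattice coset \(\theta_{l,\rho}+2\pi\mathbb{Z}\) --- precisely the quantization you yourself invoke two paragraphs later. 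The realizable currents are therefore a \(\mathbb{Z}^n\)-indexed union of parallel affine slices, not a convex set, and strict convexity of \(\tfrac12\int|j|^2\) on the affine hull proves nothing: the unconstrained minimizer over your affine set is \(\nabla^\perp\Phi_\rho\) (all fluxes \(\int_{\Gamma_l}j\cdot\nu=0\)), whose lifting \(v_\rho\) equals \(e^{-i\theta_{l,\rho}}g\) on each \(\Gamma_l\) and is generally \emph{not} admissible. The paper's route (Propositions \ref{prop:exist_Phi_rho} and \ref{prop:Phi_rho}) fixes \(\Phi_\rho\) as the unique solution of \eqref{eq:phi_rho}, writes \(j(u_\rho)=\nabla^\perp\Phi_\rho+\nabla H_\rho\) with \(H_\rho=\alpha_{l,\rho}\in\theta_{l,\rho}+2\pi\mathbb{Z}\) on \(\Gamma_l\), and recovers \(u_\rho=v_\rho e^{iH_\rho}\) via the lifting Lemma \ref{lem:lifting}.

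Your factorization step is also incorrect: \(v:=u_0\prod_i\bigl(\frac{x-a_i}{|x-a_i|}\bigr)^{-d_i}\) does \emph{not} minimize over \(\I_{g,{d_i}}\), since that class carries no trace constraint and minimality of \(u_\rho\) against competitors with trace \(g\) does not transfer. In fact \(v=U_{g,d_i}e^{i\psi_g}\), and because \(U_{g,d_i}\) satisfies the natural condition \(U\wedge\p_\nu U=0\) on \(\p G\), the cross term vanishes and \(\tfrac12\int_G|\nabla v|^2=\tfrac12\int_G|\nabla U_{g,d_i}|^2+\tfrac12\int_G|\nabla\psi_g|^2\), which is strictly larger unless \(\psi_g\) is constant --- and \(\psi_g\) is not constant in general, else \eqref{eq:canocnical_harmonic_map} would need no phase factor. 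The paper instead obtains \(U_{g,d_i}\) by the direct method in \(\I_{g,{d_i}}\), proves uniqueness up to phase through the conjugate function \(\Phi_U\) and convexity, and only then shows \(j\bigl(u_0\overline{U}_{g,d_i}\prod_i(\cdots)\bigr)=\nabla\psi_g\) via Lemma \ref{lem:generalized-Poincare}. Finally, what you call ``the hard part'' is exactly where the paper's mechanism lives and your sketch offers no substitute: the two monotone quantities of Lemma \ref{lem:finiteness_renormalized_Dirichlet} --- \(W_g^\rho-\pi(\sum_id_i^2)|\log\rho|\) non-increasing versus the frozen-boundary-data comparison \(\tilde{W}_g^\rho-\pi(\sum_id_i^2)|\log\rho|\) non-decreasing --- give existence of the limit \(W_g\) along the full family \(\rho\to0\) (your \(O(1)\) two-sided bound only gives boundedness) and supply the uniform estimate \(\sup_\rho\int_{\O_\rho}|\nabla H_\rho|^2<+\infty\) (Proposition \ref{prop:conv_H_rho}) needed to pass the cross and self-energy terms to the limit. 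A minor point: the weak \(H^1\)-stability of the degrees on \(\p B_\rho(a_i)\), which you assert, holds only because the maps are \(\mathbb{S}^1\)-valued; the paper proves it via the auxiliary functions \(V_i\) of Lemma \ref{lem:approxiamte_degree}.
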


We are also interested in the renormalized energy with Neumann boundary conditions. Indeed, although there is no topological obstruction related to the degree in this case and the minimizers of the Ginzburg-Landau energy without any constraints on the boundary are constants of unit modulus, one can be interested in the asymptotic behaviour of critical points of the Ginzburg-Landau energy. Furthermore, the Neumann boundary conditions are the natural conditions when we consider a Ginzburg-Landau energy with magnetic field, see e.g.\ \cite{Sandier_Serfaty_2007}. Let us first define this renormalized energy: for \(G\) a smooth bounded domain, \(k \in \mathbb{N}^*\), \(a_1,\dots,a_k\in G\), \(d_1,\dots,d_k \in \mathbb{Z}\) and \(\rho\) sufficiently small, we define
\begin{equation}\label{eq:class_min_Neumann}
\E_{\N,\rho}:=\{ u \in H^1(\O_\rho,\mathbb{S}^1); \deg(u,\p B_\rho(a_i))=d_i, \ i= 1,\dots,k \}
\end{equation}
\begin{equation}\label{eq:min_w_N_rho}
W_\N^\rho(\{a_i\},\{d_i\}):=\inf_{u \in \E_{\N,\rho}} \frac12 \int_{\O_\rho} |\nabla u|^2.
\end{equation}
The renormalized energy with Neumann boundary condition is defined as
\begin{equation}\nonumber
W_\N(\{a_i\},\{d_i\}):= \lim_{\rho \to 0} \left[ W_\N^\rho(\{a_i\},\{d_i\})-\pi \left( \sum_{i=1}^k d_i^2\right)|\log \rho| \right].
\end{equation}
When \(G\) is simply connected, this quantity was shown to be finite in \cite{Lefter_Radulescu_1996} and an expression in terms of Green functions with homogeneous Dirichlet boundary conditions was derived in the same article. We will obtain a similar result when \(G\) is multiply connected. It was shown in \cite{Serfaty_2005} that, when \(G\) is simply connected, critical points of the Ginzburg-Landau equation with homogeneous Neumann boundary condition converge to critical points of the renormalized energy (with Neumann boundary condition). For the renormalized energy with magnetic field and with Neumann boundary conditions we refer to \cite{Spirn_2003}, \cite{Du_Lin_1997}, \cite{Rubinstein_1995}. In the case of a multiply connected domain, the renormalized energy was formally derived in \cite{delPino_Kowalczyk_Musso_2006} as the limit when \(\e\rightarrow 0\) of the Ginzburg-Landau energy of a suitable approximation of a solution to the Ginzburg-Landau equation with homogeneous Neumann boundary conditions.
 We again introduce some definitions, we call
\(\hat{G}_0\) the solution to
\begin{equation}\label{eq:Green_Dirichlet}
\left\{
\begin{array}{rcll}
\Delta \hat{G}_0&=& 2\pi \sum_{i=1}^k d_i\delta_{a_i} & \text{ in } G, \\
\hat{G}_0&=&0 & \text{ on } \p G,
\end{array}
\right.
\end{equation}
and we call \(\hat{R}_0\) the regular part of \(\hat{G}_0\), i.e., 
\begin{equation}\label{eq:regular_part_Dirichlet}
\hat{R}_0(x)=\hat{G}_0(x)-\sum_{i=1}^k d_i \log |x-a_i|.
\end{equation}
We also define \(\psi_\N\) the solution to
\begin{equation}\label{eq:phase_Neumann}
\left\{
\begin{array}{rcll}
\Delta \psi_\N &=&0 &\text{ in } G,\\
\p_\nu \psi_\N &=& -\sum_{i=1}^k d_i \frac{(x-a_i)^\perp}{|x-a_i|^2} \cdot \nu & \text{ on } \p G,\\
\int_G \psi_\N&=&0.
\end{array}
\right.
\end{equation}
For \( \tilde{d}_0, \tilde{d}_1,\dots,\tilde{d}_n \in \mathbb{Z}\) verifying \(\sum_{l=1}^n \tilde{d}_l+\sum_{i=1}^kd_i=\tilde{d}_0\), we define
\begin{equation}\label{eq:class_I_Neumann}
\I_{\tilde{d}_l,d_i}:= \Bigl\{ v\in H^1(G,\mathbb{S}^1); \deg(v, \Gamma_l)=\tilde{d}_l, l=1,\dots,n, \deg(v,\Gamma_0)=\tilde{d}_0-\sum_{i=1}^kd_i\Bigr\}
\end{equation}
and we call \(U_{\tilde{d}_l,d_i}\) a minimizer of the Dirichlet energy in \(\I_{\tilde{d}_l,d_i}\), i.e.,
\begin{equation}\label{eq:min_ref}
U_{\tilde{d}_l,d_i}=\argmin \Bigl\{ \frac12 \int_G |\nabla v|^2; v \in \I_{\tilde{d}_l,d_i}\Bigr\}.
\end{equation}
Again, we will show, in the proof of Theorem \ref{th:main2}, that such a minimizer exists and is unique up to a phase.

\begin{theorem}\label{th:main2}
There exists a  minimizer \(\hat{u}_\rho\) for the problem \eqref{eq:min_w_N_rho} and it is unique modulo to a phase. There exist a subsequence \(\rho_p \to 0\) and a map \(\hat{u}_0\in W^{1,q}(G,\mathbb{S}^1)\) for every \(1\leq q<2\) such that, as \(p \to +\infty\), \(\hat{u}_{\rho_p} \rightarrow \hat{u}_0\) in \(\C^m_{\text{loc}}(G\setminus \{a_1,\dots,a_k\})\) for all \(m \in \mathbb{N}\). The map \(\hat{u}_0\) satisfies
\begin{equation}
\left\{
\begin{array}{rcll}
-\Delta \hat{u}_0&=&|\nabla \hat{u}_0|^2\hat{u}_0 &\text{ in } G \setminus \{a_1,\dots,a_k\},\\
\hat{u}_0\wedge \p_\nu \hat{u}_0&=& 0 &\text{ on } \p G,
\end{array}
\right.
\end{equation}
and we can write
\begin{equation}\label{eq:canonical_map_Neumann}
\hat{u}_0=\prod_{i=1}^k \left( \frac{x-a_i}{|x-a_i|} \right)^{d_i} U_{\deg(\hat{u}_0,\Gamma_l),d_i}e^{i\psi_\N}
\end{equation}
where \(\psi_\N\) is defined in \eqref{eq:phase_Neumann} and \( U_{\deg(\hat{u}_0,\Gamma_l),d_i}\) in \eqref{eq:min_ref}.
Furthermore,
\begin{equation}\label{eq:expansion_Neumann}
W_\N^\rho(\{a_i\},\{d_i\})=\pi \left( \sum_{i=1}^k d_i^2\right) |\log \rho| +W_\N(\{a_i\},\{d_i\})+o(1),
\end{equation}
with
\begin{multline}\label{eq:renormalized_Neumann_explicit}
W_\N(\{a_i\},\{d_i\})= -\pi \sum_{i\neq j} d_id_j \log |a_i-a_j| -\pi \sum_{i=1}^k d_i\hat{R}_0(a_i) 
-\pi \sum_{l=1}^n \sum_{i=1}^k d_i \beta_l\varphi_l(a_i) \\-\frac12\sum_{l=1}^n \sum_{m=1}^n \beta_l\beta_m\int_{\Gamma_l} \p_\nu \varphi_m-\frac12\sum_{l=1}^n \beta_l \int_{\Gamma_l} \p_\nu \hat{R}_0,
\end{multline}

where the functions \(\varphi_l\) satisfy \eqref{eq:def_varphi_l} and the coefficients \(\beta_l=\beta_l(\{a_i\},\{d_i\})\) are real numbers that solve the linear system
\begin{equation}\label{eq:coeff_linear_system_2}
2\pi \deg (\hat{u}_0,\Gamma_l)=\int_{\Gamma_l} \p_\nu \hat{\Phi}_0+\sum_{m=1}^n \beta_m \int_{\Gamma_l} \p_\nu \varphi_m \quad \text{ for } l=1,\dots,n.
\end{equation}
\end{theorem}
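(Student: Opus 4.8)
The plan is to follow the strategy used for Theorem \ref{th:main1}, replacing the Dirichlet data by the homogeneous Neumann condition and the Neumann Green function by the Dirichlet Green function $\hat{G}_0$; the genuinely new feature is that the degrees $\deg(\hat u_0,\Gamma_l)$ on the boundary components are \emph{not} prescribed in $\E_{\N,\rho}$ but are selected by the minimization, and it is their determination that produces the linear system \eqref{eq:coeff_linear_system_2}. First I would establish, for each fixed $\rho$, existence of a minimizer $\hat u_\rho$ of \eqref{eq:min_w_N_rho} by the direct method: $\E_{\N,\rho}$ is nonempty and sequentially weakly closed in $H^1(\O_\rho,\mathbb{C})$, since the constraint $|u|=1$ passes to weak $H^1$ limits together with a.e.\ convergence and the integer quantities $\deg(u,\p B_\rho(a_i))$ are continuous along $H^1$-convergent sequences (traces on the fixed circles converge in $H^{1/2}$), while the energy is weakly lower semicontinuous. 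For uniqueness modulo a phase, I use that on unit-valued maps $\tfrac12\int|\nabla u|^2=\tfrac12\int|u\wedge\nabla u|^2$; writing competitors in a fixed homotopy class as $u=u'e^{i\zeta}$ with $\zeta\in H^1(\O_\rho,\R)$ single-valued, the energy is a strictly convex function of $\nabla\zeta$, so the optimal current is unique and $\hat u_\rho$ is unique modulo a constant phase. The Euler--Lagrange equation reads $\dive(\hat u_\rho\wedge\nabla\hat u_\rho)=0$ in $\O_\rho$ with natural boundary condition $\hat u_\rho\wedge\p_\nu\hat u_\rho=0$ on $\p G$.

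Next I would let $\rho\to0$. A competitor built from the canonical object in \eqref{eq:canonical_map_Neumann} yields the upper bound $W_\N^\rho\le\pi(\sum_i d_i^2)|\log\rho|+C$, while the standard lower bound on annuli surrounding each $a_i$ gives $W_\N^\rho\ge\pi(\sum_i d_i^2)|\log\rho|-C$; hence the energy of $\hat u_\rho$ is bounded uniformly in $\rho$ on every fixed region away from the $a_i$. In particular the integer degrees $\deg(\hat u_\rho,\Gamma_l)$ are bounded, so along a subsequence they are constant, equal to some $\tilde d_l\in\mathbb{Z}$. Using the resulting bound in $W^{1,q}(G)$ for $1\le q<2$ together with elliptic estimates for the harmonic map system, I extract $\hat u_{\rho_p}\to\hat u_0$ in $C^m_{\mathrm{loc}}(G\setminus\{a_1,\dots,a_k\})$; the limit $\hat u_0$ is an $\mathbb{S}^1$-valued harmonic map on $G\setminus\{a_i\}$, satisfies $\hat u_0\wedge\p_\nu\hat u_0=0$ on $\p G$, has degree $d_i$ around each $a_i$ and $\tilde d_l$ around each $\Gamma_l$, so $\hat u_0\in\I_{\tilde{d}_l,d_i}$ in the sense of \eqref{eq:class_I_Neumann}.

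I would then represent $\hat u_0$ through its current $j=\hat u_0\wedge\nabla\hat u_0$. Since $\dive j=0$ in $G$ and $\hat u_0\wedge\p_\nu\hat u_0=0$ on $\p G$, the flux of $j$ across each $\Gamma_l$ vanishes, so $j=\nabla^\perp H$ for a \emph{single-valued} potential $H$ with $\Delta H=\curl j=2\pi\sum_i d_i\delta_{a_i}$ in $G$, and the boundary condition forces $\p_\tau H=0$, i.e.\ $H$ is constant on each component of $\p G$. Normalizing $H=0$ on $\Gamma_0$, this gives $H=\hat{G}_0+\sum_{l=1}^n\beta_l\varphi_l$ with $\hat{G}_0,\varphi_l$ as in \eqref{eq:Green_Dirichlet}, \eqref{eq:def_varphi_l} and real constants $\beta_l=H|_{\Gamma_l}$. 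Computing the circulation, $2\pi\deg(\hat u_0,\Gamma_l)=\int_{\Gamma_l}j\cdot\tau=\int_{\Gamma_l}\p_\nu H=\int_{\Gamma_l}\p_\nu\hat{G}_0+\sum_m\beta_m\int_{\Gamma_l}\p_\nu\varphi_m$, which is exactly \eqref{eq:coeff_linear_system_2} (the symbol $\hat\Phi_0$ there being $\hat{G}_0$). Integrating the phase and peeling off the singularities, the multivalued part is absorbed into $\prod_i((x-a_i)/|x-a_i|)^{d_i}$ and the topology around the $\Gamma_l$ into $U_{\tilde{d}_l,d_i}$, the remaining single-valued harmonic phase being $\psi_\N$, whose Neumann data in \eqref{eq:phase_Neumann} is precisely what cancels the normal derivative of $\sum_i d_i\arg(x-a_i)$ so that $\hat u_0\wedge\p_\nu\hat u_0=0$; this yields \eqref{eq:canonical_map_Neumann}, and minimality of $\hat u_0$ in $\I_{\tilde{d}_l,d_i}$ gives uniqueness modulo a phase of the reference map.

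Finally I would compute the energy. Excising the disks $B_\rho(a_i)$ and integrating by parts, $\tfrac12\int_{\O_\rho}|\nabla H|^2=\tfrac12\int_{\p\O_\rho}H\,\p_\nu H$: the contributions of the $\p B_\rho(a_i)$ produce $\pi(\sum_i d_i^2)|\log\rho|$ together with $-\pi\sum_{i\neq j}d_id_j\log|a_i-a_j|-\pi\sum_i d_i\hat{R}_0(a_i)$ and the cross term $-\pi\sum_l\sum_i d_i\beta_l\varphi_l(a_i)$ coming from evaluating $\sum_l\beta_l\varphi_l$ at the $a_i$ via \eqref{eq:regular_part_Dirichlet}, while the contributions of $\p G$, where $H=\beta_l$ on $\Gamma_l$ and $0$ on $\Gamma_0$, produce $-\tfrac12\sum_{l,m}\beta_l\beta_m\int_{\Gamma_l}\p_\nu\varphi_m-\tfrac12\sum_l\beta_l\int_{\Gamma_l}\p_\nu\hat{R}_0$; collecting everything gives \eqref{eq:renormalized_Neumann_explicit} and the expansion \eqref{eq:expansion_Neumann}. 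The main obstacle I anticipate is twofold. First, obtaining the sharp $o(1)$ remainder requires matching the upper bound (from an explicit competitor of the form \eqref{eq:canonical_map_Neumann}) with a lower bound accurate up to $o(1)$, which I would get from the precise annular lower bound around each $a_i$ combined with the $C^m_{\mathrm{loc}}$ convergence away from the vortices. Second, and specific to the multiply connected setting, one must justify that the free boundary degrees $\tilde d_l$ stabilize and are those selected by the minimization — equivalently, that minimizing the quadratic flux energy over the admissible integer degrees commutes with passing to the limit — so that the constants $\beta_l$ produced by \eqref{eq:coeff_linear_system_2} are indeed the ones appearing in \eqref{eq:renormalized_Neumann_explicit}.
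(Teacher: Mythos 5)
Your overall skeleton is right and matches the paper on the key structural points: in the Neumann case the current is a \emph{pure} perpendicular gradient (no extra \(\nabla H\) term as in the Dirichlet case), because \(j\cdot\nu=0\) holds on every component of \(\p \O_\rho\), so the generalized Poincar\'e lemma gives a single-valued potential constant on each boundary component; the decomposition \(\hat{\Phi}_0=\sum_i d_i\log|x-a_i|+\sum_l\beta_l\varphi_l+\hat{R}_0\), the circulation identity producing \eqref{eq:coeff_linear_system_2} (and your reading of \(\hat\Phi_0\) there as \(\hat G_0\) is the correct interpretation of the paper's notation), and the boundary integration by parts giving \eqref{eq:renormalized_Neumann_explicit} are all as in the paper. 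However, there are two genuine gaps. First, in the existence step your justification is wrong as stated: a minimizing sequence converges only \emph{weakly} in \(H^1\), hence only weakly in \(H^{1/2}\) on the circles \(\p B_\rho(a_i)\), and the degree is \emph{not} continuous under weak \(H^{1/2}\) convergence. The closedness of \(\E_{\N,\rho}\) uses the \(\mathbb{S}^1\)-constraint in an essential way, via the approximate-degree estimate of Lemma \ref{lem:approxiamte_degree} (or White's theorem); this is how the paper proceeds (Proposition \ref{prop:existence_Neumann} refers back to Proposition \ref{prop:existence_Euler_Lagrange}). Relatedly, your uniqueness argument (strict convexity in a single-valued phase \(\zeta\)) only gives uniqueness \emph{within a fixed homotopy class} and does not by itself exclude two minimizers with different boundary degrees \(\tilde d_l\); the paper instead deduces uniqueness from the uniqueness of \(\hat\Phi_\rho\) (Proposition \ref{prop:phi_rho_Neumann}) combined with the lifting Lemma \ref{lem:lifting}.

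Second, and more seriously, your ``limit first, potential afterwards'' ordering leaves the boundary behavior unproven. Interior elliptic estimates give \(\C^m_{\text{loc}}(G\setminus\{a_i\})\) convergence but nothing on \(\p G\); yet you need convergence up to \(\overline G\setminus\{a_1,\dots,a_k\}\) to know that \(\hat u_0\) satisfies \(j(\hat u_0)\cdot\nu=0\) on \(\p G\) (this condition is quadratic in \(u\) and does not pass through weak \(H^1\) traces), to identify \(\tilde d_l=\deg(\hat u_0,\Gamma_l)\) and \(\beta_l\) as the limits of the corresponding quantities for \(\hat u_\rho\), and to obtain the \(o(1)\) in \eqref{eq:expansion_Neumann}. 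The paper's device — which is exactly what is missing from your plan — is to introduce the single-valued harmonic conjugate \(\hat\Phi_\rho\) already at fixed \(\rho\) and to apply Lemma \ref{lem:elliptic_estimate_2} (BBH's Lemma I.4) to \(\hat\Phi_0-\hat\Phi_\rho\); since \(\hat\Phi_\rho\) is constant on each \(\p B_\rho(a_i)\) and on each \(\Gamma_l\), this yields \(\|\hat\Phi_0-\hat\Phi_\rho\|_{L^\infty}=o_\rho(1)\) and then \(\C^m\) convergence up to \(\overline G\setminus\{a_i\}\) (Proposition \ref{prop:conv_phi_rho_chapeau}), after which the energy expansion is computed at level \(\rho\) with \(o(1)\) errors and the full limit exists by the monotonicity Lemma \ref{lem:finiteness_renormalized_Neumann} — no delicate matching of upper and lower bounds is required. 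Finally, your second self-identified obstacle (that minimization over integer degrees should ``commute with the limit'') is not actually needed: the theorem asserts \eqref{eq:coeff_linear_system_2} with the degrees of the limit map \(\hat u_0\) itself, and once these are fixed the \(\beta_l\) are uniquely determined because \(\bigl(\int_{\Gamma_l}\p_\nu\varphi_m\bigr)_{l,m}=\bigl(\int_G\nabla\varphi_l\cdot\nabla\varphi_m\bigr)_{l,m}\) is the Gram matrix of the linearly independent fields \(\nabla\varphi_l\), hence invertible.
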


Let us briefly indicate the difficulties to pass from simply connected domains to multiply connected ones. For a Dirichlet boundary condition, in \cite{BBH_1994}, the authors proved that the variational problem \eqref{eq:min_w_d_rho} is directly related to a minimization problem whose minimizer solve a linear PDE. Indeed, if \(\O\) is simply connected, and if \(u_\rho\) is a minimizer for \eqref{eq:min_w_d_rho} (which exists by Proposition \ref{prop:existence_Euler_Lagrange}), then \(|\nabla u_\rho|=|\nabla \Phi_\rho|\) where \(\Phi_\rho\) is the harmonic conjugate of the gradient of the phase of \(u_\rho\). The phase is not a well-defined function (this is a multi-valued function) but the gradient of this phase is well-defined and can be expressed by the current \(j(u_\rho)=u_\rho \wedge \nabla u_\rho\) which satisfies \(\dive j(u_\rho)=0\) in \(\O_\rho\). In multiply connected domains, since the Poincar\'e lemma does not necessarily hold, it is not true anymore that the current \(j(u_\rho)\) can be expressed as the perpendicular gradient of a harmonic function and its Hodge decomposition is more complicated. We will show that we can write \( j(u_\rho)=\nabla^\perp \Phi_\rho+ \nabla H_\rho\) where \(\Phi_\rho, H_\rho\) are harmonic functions. Then we study the asymptotic behaviours of these functions as \(\rho\) tends to zero. In \cite{BBH_1994} the main tools to do that were Lemma~\ref{lem:elliptic_estimate_1} and Lemma \ref{lem:elliptic_estimate_2}. We also employ these lemmas for the convergence of \(\Phi_\rho\), however to prove the convergence of \(H_\rho\) we employ a variational argument, cf.\ Lemma \ref{lem:finiteness_renormalized_Dirichlet} and Lemma \ref{lem:energy_of_Phi_rho}, and elliptic estimates. The same difficulties appear in the case of Neumann boundary conditions.

The paper is organized as follows: in section \ref{sec:preliminaries} we recall a generalization of Poincar\'e's lemma giving conditions for a vector field to be written as the gradient of a potential function. We show how it is related to the existence of a harmonic conjugate for a harmonic function and to the existence of a lifting for a \(\mathbb{S}^1\)-valued map. In section \ref{sec:Dirichlet} we study the minimization problem \eqref{eq:min_w_d_rho} and its asymptotics as \(\rho \rightarrow 0\), thus proving Theorem \ref{th:main1}. Section \ref{sec:Neumann} is devoted to the study of a similar minimization problem with Neumann boundary conditions and to the proof of Theorem \ref{th:main2}.  In section \ref{sec:another_approach} we show how the renormalized energies can be obtained by a slightly different approach similar to the point of view in \cite{Ignat_Jerrard_2020}. In the appendix we recall two lemmas presented in \cite[chapter I]{BBH_1994} that are used through this article.\\

\textbf{Acknowledgements.} The second-named author gratefully acknowledges the support of the Paris-Saclay University during this work.
\section{Preliminaries}\label{sec:preliminaries}


We start by stating a generalization of Poincar\'e's lemma which gives condition on which a vector field in \(\R^2\) can be written as the gradient of a function.

\begin{lemma}\label{lem:generalized-Poincare}
Let \(\O\) be a smooth bounded open set in \(\R^2\). Let us call \(\Gamma_1,\dots,\Gamma_N\) the connected components of \(\p \O\). Let \(D\) be a vector field in \(\C^1(\O,\R^2)\cap \C(\overline{\O},\R^2)\) satisfying
\begin{equation}
\left\{
\begin{array}{rcll}
\dive D &=&0 & \text{ in } \O,\\
\int_{\Gamma_i} D \cdot \nu &=&0 & \text{ for } i=1,\dots,N.
\end{array}
\right.
\end{equation}
Then there exists a function \(\phi \in \C^2(\O,\R)\cap \C^1(\overline{\O},\R)\) such that 
\begin{equation}
D=\nabla^\perp \phi:=(-\p_y \phi, \p_x \phi).
\end{equation}
In the same way, if \(D\in \C^1(\O,\R^2)\cap \C(\overline{\O},\R^2)\) is a vector field such that
\begin{equation}
\left\{
\begin{array}{rcll}
\curl D &=&0 & \text{ in } \O,\\
\int_{\Gamma_i} D \cdot \tau &=&0 & \text{ for } i=1,\dots,N.
\end{array}
\right.
\end{equation}
Then there exists a function \(\varphi \in \C^2(\O,\R)\cap\C^1(\overline{\O},\R)\) such that
\begin{equation}
D=\nabla \varphi.
\end{equation}
\end{lemma}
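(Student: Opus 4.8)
The plan is to deduce the second (curl-free) assertion from the first (divergence-free) one by a rotation, and to prove the first by constructing a single-valued potential through line integration, the only genuine difficulty being path-independence in the multiply connected setting.

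First I would reduce the curl-free statement to the divergence-free one. Given $D$ with $\curl D=0$ in $\O$ and $\int_{\Gamma_i}D\cdot\tau=0$, set $E:=D^\perp=(-D_2,D_1)$. A direct computation gives $\dive E=-\curl D=0$, while the pointwise identity $E\cdot\nu=-D\cdot\tau$ on $\p\O$ yields $\int_{\Gamma_i}E\cdot\nu=0$ for every $i$. Thus $E$ satisfies the hypotheses of the first part, so $E=\nabla^\perp\phi$ for some $\phi$; applying the rotation once more and using $(D^\perp)^\perp=-D$ gives $D=\nabla\phi$, which is the desired conclusion with $\varphi=\phi$. Hence it suffices to treat the divergence-free case.

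For the first part, I would introduce the field $F:=(D_2,-D_1)=-D^\perp$ and observe that finding $\phi$ with $D=\nabla^\perp\phi$ is the same as finding $\phi$ with $\nabla\phi=F$. Since $\curl F=-\dive D=0$, the field $F$ is curl-free, hence locally a gradient by the classical Poincar\'e lemma. To produce a globally defined potential I would fix a base point $x_0\in\O$ and set $\phi(x):=\int_{\gamma_x}F\cdot d\ell$, where $\gamma_x$ is any $\C^1$ path in $\O$ from $x_0$ to $x$. Granting that this is independent of the chosen path, $\phi$ is well defined and satisfies $\nabla\phi=F$, i.e.\ $D=\nabla^\perp\phi$; the regularity $\phi\in\C^2(\O,\R)\cap\C^1(\overline\O,\R)$ then follows at once from $F\in\C^1(\O,\R^2)\cap\C(\overline\O,\R^2)$.

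The heart of the argument, and the step where the topology of $\O$ enters, is path-independence, i.e.\ the vanishing of $\oint_\gamma F\cdot d\ell$ for every closed loop $\gamma$ in $\O$. Because $\curl F=0$, this circulation depends only on the homology class of $\gamma$, and for a bounded planar domain with boundary components $\Gamma_1,\dots,\Gamma_N$ every $1$-cycle is homologous to an integer combination of the $\Gamma_i$, so $\oint_\gamma F\cdot d\ell=\sum_{i=1}^N m_i\oint_{\Gamma_i}F\cdot\tau\,d\ell$ with $m_i\in\mathbb{Z}$. The key computation is that, with the outward normal $\nu$ and induced tangent $\tau$, one has $F\cdot\tau=-D\cdot\nu$ on $\p\O$, whence $\oint_{\Gamma_i}F\cdot\tau\,d\ell=-\int_{\Gamma_i}D\cdot\nu=0$ by hypothesis; therefore all periods vanish and $\phi$ is single-valued. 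I expect this homological reduction to be the main obstacle; to keep the proof self-contained one may instead cut $\O$ along $N-1$ disjoint smooth arcs joining the boundary components so as to render it simply connected, build $\phi$ there by the ordinary Poincar\'e lemma, and check that the jumps of $\phi$ across the cuts equal the boundary circulations, which vanish.
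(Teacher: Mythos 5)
Your proposal is correct: the paper itself gives no argument for this lemma, deferring entirely to Lemma I.1 of \cite{BBH_1994}, and your proof is precisely the standard argument behind that citation --- define the potential by a line integral \(\phi(x)=\int_{\gamma_x}F\cdot d\ell\) with \(F=-D^\perp\), and observe that all periods vanish because \(\oint_{\Gamma_i}F\cdot\tau=-\int_{\Gamma_i}D\cdot\nu=0\), so that every circulation, being determined by the homology class of the loop, is zero. Your reduction of the curl-free case to the divergence-free case by the rotation \(E=D^\perp\), together with the sign checks \(E\cdot\nu=-D\cdot\tau\) and \((D^\perp)^\perp=-D\), is also correct, so nothing is missing.
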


\begin{proof}
This is the same as in Lemma I.1 in \cite{BBH_1994}.
\end{proof}

As a consequence of the previous lemma we have the following criterion to determine when a harmonic function admits a harmonic conjugate.

\begin{lemma}\label{lem:harmonic_conjugate}
 Let \(\O\) be a smooth bounded open domain in \(\R^2\). Let us call \(\Gamma_i, i=1,\dots,N\) the connected components of \(\p \O\). Let \(H\in \C^1(\overline{\O})\) be a harmonic function in \(\O\). Then \(H\) admits a harmonic conjugate, i.e., there exists a harmonic function in \(\O\) denoted by \(H^\perp\) such that 
 \[ \nabla H= \nabla ^\perp H^\perp ,\]
 if and only if 
\begin{equation}
 \int_{\Gamma_i} \p_\nu H =0 \quad \text{ for all } i=1, \dots,N.
\end{equation}
\end{lemma}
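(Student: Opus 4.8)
The plan is to apply the first half of Lemma~\ref{lem:generalized-Poincare} to the vector field $D:=\nabla^\perp H=(-\p_y H,\p_x H)$, which encodes the desired conjugacy relation $\nabla H=\nabla^\perp H^\perp$ in the form $\nabla H^\perp = -\nabla^\perp H = D^\perp$-type bookkeeping; more cleanly, I would note that the condition $\nabla H=\nabla^\perp H^\perp$ is equivalent to saying that the field $V:=\nabla H$ is a $\nabla^\perp$ of something, and the generalized Poincaré lemma gives exactly the criterion for this.

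First I would prove the direction ($\Rightarrow$). Suppose $H^\perp$ exists with $\nabla H=\nabla^\perp H^\perp$. Componentwise this reads $\p_x H=-\p_y H^\perp$ and $\p_y H=\p_x H^\perp$ (the Cauchy--Riemann equations), so $\p_\nu H=\nabla H\cdot\nu=\nabla^\perp H^\perp\cdot\nu$. Since $\nabla^\perp H^\perp\cdot\nu=\nabla H^\perp\cdot\tau=\p_\tau H^\perp$ (rotating by $\pi/2$ turns the normal into the tangent), we get $\int_{\Gamma_i}\p_\nu H=\int_{\Gamma_i}\p_\tau H^\perp=0$ because $\Gamma_i$ is a closed curve and $H^\perp$ is single-valued on $\overline\O$. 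This handles the necessity.

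For the converse ($\Leftarrow$), I would set $D:=\nabla H$ and check the hypotheses of the curl-form of Lemma~\ref{lem:generalized-Poincare} applied to an appropriately rotated field, or—more symmetrically—apply the divergence-form to $\tilde D:=-\nabla^\perp H=(\p_y H,-\p_x H)$. Since $H$ is harmonic, $\dive \tilde D=\p_x\p_y H-\p_y\p_x H=0$ automatically, and $\tilde D\cdot\nu=-\nabla^\perp H\cdot\nu=-\p_\tau H\cdot(\text{rotation})$; after the identification $\nabla^\perp H\cdot\nu=-\p_\nu H^{\text{(rot)}}$ one sees that the boundary flux $\int_{\Gamma_i}\tilde D\cdot\nu$ equals $\pm\int_{\Gamma_i}\p_\nu H$, which vanishes by hypothesis. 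The lemma then produces $\phi\in\C^2(\O)\cap\C^1(\overline\O)$ with $\tilde D=\nabla^\perp\phi$, i.e.\ $-\nabla^\perp H=\nabla^\perp\phi$, giving $\nabla H=\nabla^\perp(-\phi)$; setting $H^\perp:=-\phi$ yields the harmonic conjugate. Finally, harmonicity of $H^\perp$ follows since $\Delta H^\perp=\dive\nabla H^\perp=\dive(-\nabla^\perp H)=0$.

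The main subtlety is purely bookkeeping: keeping the rotation operator $(\cdot)^\perp$ and the translation between $\p_\nu$ and $\p_\tau$ under this rotation consistent, so that the flux condition $\int_{\Gamma_i}\p_\nu H=0$ matches exactly the zero-flux (or zero-circulation) hypothesis required by Lemma~\ref{lem:generalized-Poincare}. There is no analytic difficulty here—the harmonicity of $H$ gives the differential constraint ($\dive$ or $\curl$ vanishing) for free, and the boundary integrals are literally the same integrals up to sign—so the whole argument reduces to correctly invoking the generalized Poincaré lemma with the right orientation conventions.
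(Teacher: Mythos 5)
Your necessity direction is fine (up to a harmless sign: with the paper's convention that \((\nu,\tau)\) is direct, one has \(\nabla^\perp f\cdot\nu=-\p_\tau f\) and \(\nabla^\perp f\cdot\tau=\p_\nu f\), but \(\int_{\Gamma_i}\p_\tau H^\perp=0\) either way), and your opening remark — that \(\nabla H=\nabla^\perp H^\perp\) asks precisely that \(\nabla H\) be a \(\nabla^\perp\), which is what the generalized Poincar\'e lemma characterizes — is exactly the paper's proof. But the sufficiency argument you actually carry out is broken. Applying the divergence form of Lemma \ref{lem:generalized-Poincare} to \(\tilde D:=-\nabla^\perp H\) is vacuous: \(\dive\tilde D=\p_x\p_yH-\p_y\p_xH=0\) holds for \emph{any} \(\C^2\) function (harmonicity plays no role, a warning sign that the PDE has been discarded), and \(\tilde D\cdot\nu=\p_\tau H\), not \(\pm\,\p_\nu H\), so \(\int_{\Gamma_i}\tilde D\cdot\nu=\int_{\Gamma_i}\p_\tau H=0\) automatically for every single-valued \(H\) — your hypothesis \(\int_{\Gamma_i}\p_\nu H=0\) is never used. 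Worse, the conclusion is circular: \(-\nabla^\perp H=\nabla^\perp\phi\) means \(\nabla^\perp(H+\phi)=0\), i.e.\ \(\phi=-H+\mathrm{const}\), and the step ``hence \(\nabla H=\nabla^\perp(-\phi)\)'' is an algebra error — rotating both sides of \(-\nabla^\perp H=\nabla^\perp\phi\) by \(\pi/2\) gives \(\nabla H=-\nabla\phi\), not \(\nabla H=-\nabla^\perp\phi\). Your \(H^\perp:=-\phi\) is just \(H\) up to a constant, not a conjugate. A sanity check confirming the argument cannot be right: \(H=\log|x|\) on the annulus \(\{1<|x|<2\}\) satisfies every condition your argument actually invokes, yet \(\int_{\Gamma_i}\p_\nu H=2\pi\neq0\) and it admits no single-valued harmonic conjugate.

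The repair is the paper's one-line proof, which is the route you named but then abandoned: apply the divergence form of Lemma \ref{lem:generalized-Poincare} to \(D:=\nabla H\) itself. Then \(\dive D=\Delta H=0\) is exactly where harmonicity enters, and \(\int_{\Gamma_i}D\cdot\nu=\int_{\Gamma_i}\p_\nu H=0\) is exactly the hypothesis; the lemma yields \(\phi\) with \(\nabla H=\nabla^\perp\phi\), and \(H^\perp:=\phi\) is harmonic since \(\Delta\phi=\curl(\nabla^\perp\phi)=\curl(\nabla H)=0\). If you prefer the rotated bookkeeping, the correct version is the \emph{curl} form applied to \(\nabla^\perp H\): there \(\curl(\nabla^\perp H)=\Delta H=0\) and \(\int_{\Gamma_i}\nabla^\perp H\cdot\tau=\int_{\Gamma_i}\p_\nu H=0\) is the hypothesis, and from \(\nabla^\perp H=\nabla\varphi\) one correctly deduces \(\nabla H=\nabla^\perp(-\varphi)\). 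The moral is that the obstruction lives in the tangential circulation of \(\nabla^\perp H\) (equivalently the normal flux of \(\nabla H\)); your chosen field tested the normal flux of \(\nabla^\perp H\), which vanishes for trivial reasons.
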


\begin{proof}
It suffices to apply the previous lemma with \(D=\nabla H\). Then we find \(H^\perp\) such that \( \nabla H=\nabla^\perp H^\perp\), by observing that \(\curl \nabla H=0\) we obtain that \(H^\perp\) is harmonic. 
\end{proof}

\begin{lemma}\label{lem:lifting}
Let \(\O\) be a smooth bounded open set in \(\R^2\). Let us call \(\Gamma_1,\dots,\Gamma_N\) the connected components of \(\p \O\). Let \( F=(F_1,F_2) \in \C^1(\O,\R^2)\cap \C(\overline{\O},\R^2)\) be such that
\begin{equation}
\curl F=\p_xF_2-\p_yF_1=0,
\end{equation}
\begin{equation}\label{eq:multi_valued}
\int_{\Gamma_i} F\cdot \tau \in 2\pi \mathbb{Z}.
\end{equation}
Then, there exists \(u\in \C^1(\overline{\O},\mathbb{S}^1)\), unique up to a phase, such that
\begin{equation}
j(u):= u\wedge \nabla u= F=(F_1,F_2).
\end{equation}
\end{lemma}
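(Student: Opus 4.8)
The plan is to construct the lifting $u$ from the given vector field $F$ by integrating $F$ along paths, using the hypothesis $\int_{\Gamma_i} F \cdot \tau \in 2\pi\mathbb{Z}$ to guarantee that the resulting multi-valued phase descends to a well-defined $\mathbb{S}^1$-valued map. The key observation is that if we want $u = e^{i\varphi}$ with $j(u) = u \wedge \nabla u = \nabla \varphi = F$, then $\varphi$ is a (possibly multi-valued) potential for $F$. The condition $\curl F = 0$ is exactly what is needed locally for such a potential to exist, and the quantization condition \eqref{eq:multi_valued} handles the global obstruction coming from the non-trivial topology of $\O$.

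First I would reduce to the case where $F$ is actually a gradient by subtracting off harmonic contributions associated to the holes. Concretely, using Lemma~\ref{lem:generalized-Poincare} (the $\curl$ version), I would like to write $F = \nabla\varphi$, but this is only possible when $\int_{\Gamma_i} F\cdot\tau = 0$ for all $i$. Since we only know these periods lie in $2\pi\mathbb{Z}$, write $m_i := \frac{1}{2\pi}\int_{\Gamma_i} F\cdot\tau \in \mathbb{Z}$. I would introduce correction functions: on each connected component, a locally-defined angular-type function $\t_i$ whose gradient is a closed (curl-free) field with period $2\pi$ around $\Gamma_i$ and $0$ around the others, analogous to $\arg(x - c_i)$ for a point $c_i$ inside the hole bounded by $\Gamma_i$. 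Then $F - \sum_i m_i \nabla \t_i$ has all periods equal to zero and curl zero, so by Lemma~\ref{lem:generalized-Poincare} it equals $\nabla\varphi$ for a single-valued $\varphi \in \C^2(\O)\cap\C^1(\overline{\O})$.

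Next I would define
\begin{equation}
u := e^{i\varphi}\prod_i e^{i m_i \t_i},
\end{equation}
where each factor $e^{i m_i \t_i}$ is a genuinely single-valued $\mathbb{S}^1$-valued function even though $\t_i$ is multi-valued, precisely because $m_i \in \mathbb{Z}$ makes the $2\pi m_i$ jumps in the phase invisible after exponentiation. Thus $u$ is a well-defined element of $\C^1(\overline{\O},\mathbb{S}^1)$. A direct computation then gives $j(u) = u \wedge \nabla u = \nabla\varphi + \sum_i m_i \nabla\t_i = F$, using the product rule for $j$ and the fact that $j(e^{i\psi}) = \nabla\psi$ for any real phase $\psi$. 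I expect the main technical obstacle to be the careful justification that $e^{i m_i \t_i}$ is single-valued and $\C^1$ up to the boundary despite $\t_i$ being multi-valued and singular inside the hole; this requires checking that the angular functions can be chosen smooth on a neighborhood of $\overline{\O}$ (the singularity sits inside $\omega_i$, away from $\O$) and that their periods are correctly normalized.

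Finally, for uniqueness up to a phase, suppose $u_1, u_2 \in \C^1(\overline{\O},\mathbb{S}^1)$ both satisfy $j(u_k) = F$. Then $w := u_1 \bar{u_2}$ is $\mathbb{S}^1$-valued and satisfies $j(w) = j(u_1) - j(u_2) = 0$, so $\nabla w = i (w \wedge \nabla w)\, \overline{(\cdot)}$-type computation shows $\nabla w = 0$; hence $w$ is a constant of modulus one, giving $u_1 = c\, u_2$ with $|c| = 1$. Connectedness of $\O$ is used here to pass from $\nabla w = 0$ to $w$ being globally constant, so I would either assume $\O$ connected or apply the argument component by component.
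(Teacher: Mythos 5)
Your proof is correct, and it takes a genuinely different route from the paper's. The paper's argument is a direct monodromy construction: it sets \(\psi(x)=\int_{\gamma_x} F\cdot\tau\) along paths from a fixed base point, observes that since \(\curl F=0\) the integral depends only on the homology class of the path and that, by \eqref{eq:multi_valued}, different branches of \(\psi\) differ by integer multiples of \(2\pi\) (the homology of \(\O\) being generated by the boundary components), and then defines \(u=e^{i\psi}\); uniqueness is proved exactly as you do, via \(j(u\bar{v})=0\) and \(|\nabla(u\bar v)|=|j(u\bar v)|\) for unimodular maps. Your version implements the same idea through an explicit de Rham--type decomposition \(F=\nabla\varphi+\sum_i m_i\nabla\theta_i\), invoking Lemma~\ref{lem:generalized-Poincare} after subtracting the angular fields. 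What this buys is an explicit product formula for \(u\), very much in the spirit of the canonical maps \eqref{eq:canocnical_harmonic_map} constructed later in the paper; what it costs is the extra bookkeeping with the auxiliary points \(c_i\) and the multi-valued angles, which the path-integral argument avoids entirely.

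One slip needs repair: as literally stated, your family of angular fields cannot exist. For a curl-free field on \(\O\), Stokes' theorem forces the period around the outer boundary component to equal the sum of the periods around the inner components, so there is no curl-free field with period \(2\pi\) around the \emph{outer} component and \(0\) around all the inner ones (and \(\arg(x-c)\) with \(c\) exterior to the outer curve has period \(0\) around every cycle in \(\O\)). The fix is standard and costless: introduce \(\theta_i\) only for the inner components, with \(c_i\) in the corresponding hole. Then \(F-\sum_i m_i\nabla\theta_i\) has zero period around each inner component by construction, and automatically zero period around the outer one as well, because \(\int_{\Gamma_{\mathrm{out}}}F\cdot\tau=\sum_{\mathrm{inner}}\int_{\Gamma_i}F\cdot\tau\) (each \(m_i\in\mathbb{Z}\) by hypothesis, so the quantization of the outer period is consistent); Lemma~\ref{lem:generalized-Poincare} then applies. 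Your closing remark that connectedness of \(\O\) is needed to pass from \(\nabla(u_1\bar u_2)=0\) to a single global phase is a correct observation; the paper leaves it implicit since all domains it considers are connected.
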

\begin{proof}
We define \(\psi(x)= \int_{\gamma_x} F \cdot \tau\) where \(\gamma_x\) is a path joining a given point \(x_0\) to a point \(x\in \O\). The function \(\psi\) is multi-valued because \(\O\) is possibly multiply connected but, thanks to \eqref{eq:multi_valued}, the different values differ only by an integer multiple of \(2\pi\). Thus \(u= e^{i\psi}\) is well-defined and satisfies that \( j(u)= F\). To prove the uniqueness, we assume that \(u,v\in \C^1(\overline{\O},\mathbb{S}^1)\) are such that \(j(u)=j(v)\). Then, we compute that \(j(u\bar{v})=u\bar{v}\wedge \nabla (u\bar{v})=u\bar{v}\wedge (\bar{v} \nabla u+ u\nabla \bar{v})=j(u)+j(\bar{v})=j(u)-j(v)=0\). But since \(u\bar{v}\) is \(\mathbb{S}^1\)-valued, we have that \( |j(u\bar{v})|=|\nabla (u\bar{v})|\) and it implies that \(\nabla (u \bar{v})=0\) in \(G\). Thus \( u=e^{i\eta} v\) for some \(\eta \in \R\).
\end{proof}

To conclude this section we make the following observation: we define the vector fields \( X_0=(1,1), \{X_l:=\nabla \varphi_l\}_{l=1,\dots,n}\), where the functions \(\varphi_l, l=1,\dots,n\) are defined in \eqref{eq:def_varphi_l}; thanks to Lemma \ref{lem:generalized-Poincare}, \((X_0,X_1,\dots,X_l)\) is a basis of the vector space \[ \{X \in \mathcal{C}^\infty(\overline{G},\R^2); \dive X=\curl X=0, \ X\cdot \tau=0 \text{ on } \p G\}.\] This basis is in duality with a basis of the space of smooth harmonic one-forms in \(G\) with vanishing tangential components. However the basis \( (X_0,\nabla \varphi_1,\dots,\nabla \varphi_l)\) is not orthonormal for the \(L^2\)-inner product since \( \int_G \nabla \varphi_l \cdot \nabla \varphi_m= \int_{\Gamma_l} \p_\nu \varphi_m=\int_{\Gamma_m} \p_\nu \varphi_l\) has no reason to vanish a priori.  
\section{Renormalized energies with Dirichlet boundary conditions}\label{sec:Dirichlet}

Let \(G=\widetilde{G} \setminus \cup_{l=1}^n \overline{\omega}_l\) be a smooth multiply connected bounded domain, with \(\widetilde{G}, \omega_1,\dots, \omega_n\) smooth simply connected bounded domains. We call \(\Gamma_0=\p \widetilde{G}\) the exterior connected component of \(\p G\) and \(\Gamma_1=\p \omega_1,\dots,\Gamma_n=\p \omega_n\) the inner connected components of \(\p G\). These are smooth curves that we orient in an anti-clockwise manner. More precisely \(\nu\) denotes the outward unit normal to \(\p \widetilde{G}\) and the outward unit normal to \(\omega_l\), \(l=1,\dots,n\) and \( (\nu,\tau)\) is always direct, with \(\tau\) a tangent vector to \(\p G\). We take \(g\in \C^1(\p G)\).

 For \(k \in \mathbb{N}^*\), let \(a_1,\dots,a_k\in G\) be \(k\) distinct points in \(G\). For \(\rho>0\) small enough so that \(\bar{B}_\rho(a_i) \cap \bar{B}_\rho(a_j)= \emptyset\) for every \(i \neq j\) and \(\bar{B}_\rho(a_i) \subset \O\) we recall that \(\O_\rho\) is defined by \eqref{eq:Omega_rho}. Our goals in this section is to study the asymptotic behaviour as \(\rho\) goes to \(0\) of the minimization problem \eqref{eq:Omega_rho}, where the class \(\E_{g,\rho}\) is defined in \eqref{eq:E_rho}, and to prove Theorem \ref{th:main1}. In the following, \(\nu\) also denotes the outward unit normal to \(B_\rho(a_i), i=1,\dots,k\).  We start with

\begin{proposition}\label{prop:existence_Euler_Lagrange}
The infimum \eqref{eq:min_w_d_rho} is attained by a map \(u_\rho \in H^1(\O_\rho, \mathbb{S}^1)\) which satisfies the following Euler-Lagrange equation:
\begin{equation}
\left\{
\begin{array}{rcll}
-\Delta u_\rho &=& |\nabla u_\rho|^2u_\rho & \text{ in } \O_\rho, \\
u_\rho&=& g & \text{ on } \p G,\\
u_\rho \wedge \p_\nu u_\rho &=&0  &\text{ on } \p B_\rho(a_i),\ i=1,\dots,k.
\end{array}
\right.
\end{equation}
Furthermore, \(u_\rho \in \C^\infty(\O_\rho,\mathbb{S}^1)\cap \C^1(\overline{\O}_\rho,\mathbb{S}^1)\) and \(u_\rho\) is also smooth up to the boundary of every \(B_\rho(a_i)\) for \(i=1,\dots,k\).
\end{proposition}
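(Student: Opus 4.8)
The plan is to obtain $u_\rho$ by the direct method, then compute the first variation to get the Euler--Lagrange system, and finally bootstrap regularity through a local lifting. Let me take a minimizing sequence $(u_n)\subset\E_{g,\rho}$ for \eqref{eq:min_w_d_rho}. Since $|u_n|=1$ a.e.\ and $\int_{\O_\rho}|\nabla u_n|^2$ is bounded, $(u_n)$ is bounded in $H^1(\O_\rho,\mathbb{C})$, so up to a subsequence $u_n\rightharpoonup u$ weakly in $H^1$, strongly in every $L^p$ with $p<\infty$, and a.e. The a.e.\ limit forces $|u|=1$, continuity of the trace operator gives $\tr_{\p G}u=g$, and weak lower semicontinuity of the Dirichlet integral yields $\frac12\int_{\O_\rho}|\nabla u|^2\le W_g^\rho$. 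It remains to check that the degree constraints in \eqref{eq:E_rho} survive the weak limit, so that $u\in\E_{g,\rho}$ and hence $u$ is a minimizer.

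Passing the degrees to the limit is the one genuinely delicate point. The current $j(u_n)=u_n\wedge\nabla u_n$ converges weakly in $L^q$ for $q<2$ to $j(u)$, since $u_n\to u$ strongly in $L^p$ while $\nabla u_n\rightharpoonup\nabla u$ weakly in $L^2$. For $\mathbb{S}^1$-valued $H^1$ maps the Jacobian $\p_x u\wedge\p_y u$ vanishes a.e.\ (both partials are tangent to $\mathbb{S}^1$, hence parallel), so $\curl j(u_n)=0$ distributionally, and the circulation $\int_{\p B_r(a_i)}j(u_n)\cdot\tau$ is independent of $r$ on a fixed annulus $A_i\subset\O_\rho$ around $\p B_\rho(a_i)$. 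Averaging this identity against $\dif r/r$ rewrites the circulation as $\int_{A_i}j(u_n)\cdot\frac{(x-a_i)^\perp}{|x-a_i|^2}$, an integral against a fixed smooth bounded field; passing to the weak limit and running the same computation backwards for $u$ gives $\deg(u,\p B_\rho(a_i))=d_i$. Thus $u\in\E_{g,\rho}$.

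For the Euler--Lagrange system I use the admissible variations $u_t=u_\rho\,e^{it\psi}$ with $\psi\in H^1(\O_\rho,\R)$ and $\psi=0$ on $\p G$: these keep $|u_t|=1$, fix the trace on $\p G$, and, because $e^{it\psi}$ is single-valued, preserve every degree $\deg(u_t,\p B_\rho(a_i))=d_i$. Differentiating $\frac12\int_{\O_\rho}|\nabla u_t|^2$ at $t=0$ gives $\int_{\O_\rho}j(u_\rho)\cdot\nabla\psi=0$ for all such $\psi$. Taking $\psi$ compactly supported yields $\dive j(u_\rho)=0$ weakly, and since $\dive j(u_\rho)=u_\rho\wedge\Delta u_\rho$ with $|u_\rho|=1$, this is equivalent to $-\Delta u_\rho=|\nabla u_\rho|^2u_\rho$; allowing $\psi\neq0$ on the inner circles produces the natural boundary condition $j(u_\rho)\cdot\nu=u_\rho\wedge\p_\nu u_\rho=0$ on each $\p B_\rho(a_i)$.

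Regularity follows from a local lifting. On a simply connected piece, $j(u_\rho)$ is curl-free in $L^2$, so by Lemma \ref{lem:generalized-Poincare} there is $\varphi$ with $j(u_\rho)=\nabla\varphi$, and decomposing $\nabla u_\rho$ in the frame $\{u_\rho,iu_\rho\}$ shows $u_\rho=c\,e^{i\varphi}$; then $\dive j(u_\rho)=0$ makes $\varphi$ harmonic, hence $\C^\infty$, giving $u_\rho\in\C^\infty(\O_\rho)$. Up to $\p B_\rho(a_i)$ the lift satisfies the homogeneous Neumann condition $\p_\nu\varphi=0$ on an analytic circle, so reflection and elliptic regularity make $\varphi$, and thus $u_\rho$, smooth up to $\p B_\rho(a_i)$. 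Near $\p G$ the lift solves a Dirichlet problem with the $\C^1$ datum obtained by lifting $g$, and Schauder theory on the smooth boundary yields $\varphi\in\C^1$ up to $\p G$, whence $u_\rho\in\C^1(\overline{\O}_\rho,\mathbb{S}^1)$. I expect the degree-preservation step of the second paragraph to be the main obstacle, since weak $H^1$ convergence alone does not continuously preserve the degree and one must exploit the vanishing Jacobian of $\mathbb{S}^1$-valued maps.
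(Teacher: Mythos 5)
Your proof is correct, and its two key steps take genuinely different routes from the paper's. For passing the degree constraint to the weak limit, the paper proves the quantitative stability estimate of Lemma \ref{lem:approxiamte_degree}: it represents \(\deg(u,\p B_\rho(a_i))\) as the bulk integral \(\frac{1}{2\pi}\int_{\O_\rho} u\wedge(\p_xV_i\p_yu-\p_yV_i\p_xu)\) with \(V_i\) the harmonic potential \eqref{def:V_i}, which yields \(|\deg(u,\p B_\rho(a_i))-\deg(v,\p B_\rho(a_i))|\lesssim \|u-v\|_{L^2}\left(\|\nabla u\|_{L^2}+\|\nabla v\|_{L^2}\right)\), so that strong \(L^2\) convergence with bounded gradients freezes the degrees along the minimizing sequence. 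Your annulus-averaging of the circulation against \(\dif r/r\) is the classical alternative and rests on exactly the structural fact you single out, \(\p_xu\wedge\p_yu=0\) a.e.\ for \(\mathbb{S}^1\)-valued \(H^1\) maps --- the same fact that drives the paper's lemma; the paper's version buys an explicit Lipschitz modulus for the degree (reused nowhere else, so nothing is lost), while yours gives only qualitative convergence, which suffices here. For regularity, the paper cites Morrey/H\'elein for interior regularity of minimizing harmonic maps and \cite[Lemma 4.4]{Berlyand_Mironescu_unpublished} for smoothness up to \(\p B_\rho(a_i)\), whereas your local lifting reduces everything to harmonic functions; this is more elementary, exploits the \(\mathbb{S}^1\) target, applies to any weak solution rather than only minimizers, and is consistent with how the paper itself handles liftings elsewhere (Lemma \ref{lem:lifting}, Proposition \ref{prop:exist_Phi_rho}). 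Your derivation of the Euler--Lagrange system from phase variations alone, using that \(\dive j(u_\rho)=0\) is equivalent to \(-\Delta u_\rho=|\nabla u_\rho|^2u_\rho\) for unit-valued maps, is if anything cleaner than the paper's, whose variations \(u_\rho+t\varphi\) preserve \(|u|=1\) only after renormalization. Two points deserve tightening: Lemma \ref{lem:generalized-Poincare} is stated for \(\C^1\) vector fields, so lifting the merely-\(L^2\) current \(j(u_\rho)\) on a simply connected piece requires either the \(H^1\) lifting theorem of Bethuel--Zheng (or Carbou) or a preliminary smoothing step; and near \(\p G\), ``Schauder theory'' with only \(\C^1\) Dirichlet data does not by itself give \(\C^1\) regularity up to the boundary (one needs \(\C^{1,\alpha}\) or Dini-continuous data) --- the paper's own statement glosses over the same point, but your appeal to Schauder should be flagged as needing this refinement.
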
 

When one uses the direct method of calculus of variations  to prove Proposition \ref{prop:existence_Euler_Lagrange}, the difficulty is that the degree is not continuous with respect to the \(H^{\frac12}\)-weak convergence. However, since we work with \(\mathbb{S}^1\)-valued maps, it is possible to show that, in this particular case, we can recover weak continuity of the degree. This follows for example from a result of White \cite{White_1989}, but we will give a direct proof relying on Lemma \ref{lem:approxiamte_degree} below. We first introduce functions \(V_i\) for \(i=1,\dots,k\) defined by
\begin{equation}\label{def:V_i}
\left\{
\begin{array}{rcll}
-\Delta V_i &=&0 &\text{ in } \O_\rho,\\
V_i&=& 1 &\text{ on } \p \O_\rho \setminus \p B_\rho(a_i), \\
V_i&=& 0 &\text{ on } \p B_\rho(a_i).
\end{array}
\right.
\end{equation}

\begin{lemma}\label{lem:approxiamte_degree}
Let \(u \in H^1(\O_\rho, \mathbb{S}^1),\) then 
\begin{equation}
\deg(u, \p B_\rho(a_i))=\frac{1}{2\pi}\int_{\O_\rho} u \wedge (\p_x V_i \p_y u-\p_yV_i \p_xu).
\end{equation}
Furthermore if \(u,v\in H^1(\O_\rho,\mathbb{S}^1)\) then
\begin{equation}
|\deg(u,\p B_\rho(a_i))- \deg(v,\p B_\rho(a_i))| \leq \frac{2}{\pi} \|V_i\|_{\mathcal{C}^1}\|u-v\|_{L^2}(\|\nabla u\|_{L^2}+\|\nabla v\|_{L^2}).
\end{equation}
\end{lemma}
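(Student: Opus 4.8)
The plan is to establish both assertions first for smooth $\mathbb{S}^1$-valued maps and then to pass to $H^1$ by density, the guiding observation being that for a map into $\mathbb{S}^1$ the current $j(u)=u\wedge\nabla u$ is curl-free. Indeed, differentiating $|u|^2=1$ shows $\det\nabla u=0$, and a direct computation gives $\curl j(u)=2\det\nabla u=0$. The auxiliary function $V_i$ solving \eqref{def:V_i} is harmonic, hence by elliptic regularity $V_i\in\mathcal{C}^\infty(\overline{\O}_\rho)$ (the boundary of $\O_\rho$ is smooth), it is constant on each connected component of $\p\O_\rho$, and $\dive\nabla^\perp V_i=0$. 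These three facts are what make everything work.

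For the integral formula I would first rewrite the integrand using $\nabla^\perp V_i=(-\p_yV_i,\p_xV_i)$: one checks that $\p_xV_i\,\p_yu-\p_yV_i\,\p_xu=\nabla^\perp V_i\cdot\nabla u$, and since $\wedge$ is bilinear,
\[
u\wedge(\p_xV_i\,\p_yu-\p_yV_i\,\p_xu)=\nabla^\perp V_i\cdot j(u).
\]
Then, using $\nabla^\perp V_i\cdot j(u)=\curl(V_i\,j(u))-V_i\,\curl j(u)$ with $\curl j(u)=0$, Green's theorem reduces the bulk integral to the boundary term $\int_{\p\O_\rho}V_i\,(j(u)\cdot t)$, where $t$ is the positively oriented unit tangent. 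Because $V_i\equiv1$ on $\p\O_\rho\setminus\p B_\rho(a_i)$ and $V_i\equiv0$ on $\p B_\rho(a_i)$, and because $\int_{\p\O_\rho}j(u)\cdot t=\int_{\O_\rho}\curl j(u)=0$, only the circle $\p B_\rho(a_i)$ contributes, where $t=-\tau$; this produces exactly $2\pi\deg(u,\p B_\rho(a_i))$. The only care here is the orientation bookkeeping ($t=\tau$ on $\Gamma_0$, $t=-\tau$ on the inner curves $\Gamma_l$ and on the $\p B_\rho(a_m)$), following the conventions fixed at the start of Section~\ref{sec:Dirichlet}.

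For the Lipschitz estimate I would apply the formula to $u$ and $v$, subtract, and use the algebraic splitting $j(u)-j(v)=u\wedge\nabla w+w\wedge\nabla v$ with $w=u-v$. The crucial step is to integrate the first term by parts so as to move the derivative off $w$: since $\dive\nabla^\perp V_i=0$ and the boundary contribution carries the factor $\nabla^\perp V_i\cdot n=-\p_t V_i=0$ (the tangential derivative of $V_i$ vanishes as $V_i$ is constant on each component of $\p\O_\rho$), one gets the key identity
\[
\int_{\O_\rho}\nabla^\perp V_i\cdot(u\wedge\nabla w)=\int_{\O_\rho}\nabla^\perp V_i\cdot(w\wedge\nabla u).
\]
Substituting back yields $\int_{\O_\rho}\nabla^\perp V_i\cdot(j(u)-j(v))=\int_{\O_\rho}\nabla^\perp V_i\cdot\big(w\wedge\nabla(u+v)\big)$, and Cauchy--Schwarz together with $|\nabla^\perp V_i|=|\nabla V_i|\le\|V_i\|_{\mathcal{C}^1}$ and $|w\wedge\nabla(u+v)|\le|w|(|\nabla u|+|\nabla v|)$ gives the stated bound (in fact with a constant no larger than $\tfrac{1}{2\pi}\le\tfrac{2}{\pi}$).

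The genuine obstacle is not the algebra but the justifications at the level of $H^1$ maps: that $\curl j(u)=0$ and that the above integrations by parts are licit. This I would handle by density of $\mathcal{C}^\infty(\overline{\O}_\rho,\mathbb{S}^1)$ in $H^1(\O_\rho,\mathbb{S}^1)$, valid in dimension two, observing that both sides of the integral formula are continuous for the $H^1$ topology (the degree being continuous under strong $H^{1/2}$ convergence of the traces), while the estimate itself only requires the smooth $V_i$ and the derivative-transfer identity. I expect this last identity to be the conceptual heart of the lemma, since it is precisely what replaces $\|\nabla w\|_{L^2}$ by the weaker $\|w\|_{L^2}$, and hence what will later allow the estimate to be combined with Rellich compactness to recover the weak continuity of the degree.
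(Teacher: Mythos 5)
Your proposal is correct and follows essentially the same route as the paper: integration by parts of \(\nabla^\perp V_i\cdot j(u)\) against the boundary values of \(V_i\) combined with the total degree balance (which the paper derives from \(\p_x u\wedge\p_y u=0\) a.e., the same fact as your \(\curl j(u)=0\)), and for the Lipschitz bound the same derivative-transfer identity exploiting \(\dive\nabla^\perp V_i=0\) and the local constancy of \(V_i\) on \(\p\O_\rho\) (the paper states it in the symmetric form \(\int_{\O_\rho} v\wedge(\p_xu\,\p_yV_i-\p_yu\,\p_xV_i)=\int_{\O_\rho} u\wedge(\p_xv\,\p_yV_i-\p_yv\,\p_xV_i)\), which is your identity with \(w\) replaced by \(v\)). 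Your explicit density justification for the \(H^1\) setting and the slightly sharper constant \(\tfrac{1}{2\pi}\) are harmless refinements of the paper's argument, which works directly with \(H^1\) maps and uses the cruder bound yielding \(\tfrac{2}{\pi}\).
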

The proof of this lemma can be found in \cite[section 3]{Berlyand_Rybalko_2010} and \cite[Proposition 1]{Dos_Santos_2009}, we give the details for the comfort of the reader.
\begin{proof}

 For \(i=1,\dots,k\), an integration by parts gives
\begin{align*}
\frac{1}{2\pi} \int_{\O_\rho} u \wedge (\p_x V_i \p_yu-\p_yV_i \p_x u)&=\frac{1}{2\pi} \int_{\Gamma_0}  V_i (u \wedge \p_\tau u) - \frac{1}{2\pi}\int_{\p \O_\rho \setminus \Gamma_0} V_i (u \wedge \p_\tau u) \\
&=\deg(u,\Gamma_0)-\sum_{j=1, j \neq i}^k \deg( u, \p B_\rho(a_i)) -\sum_{l=1}^n \deg(u,\Gamma_l) \\
& = \deg(u, \p B_\rho(a_i)).
\end{align*}
For the last equality we have used that, since \(u \in H^1(\O_\rho,\mathbb{S}^1)\), we have \(\p_xu \cdot \p_yu=0\) almost everywhere and thus by integrating by parts we find
\[ 0= \frac{1}{2\pi}\int_{\O_\rho} \p_x u \wedge \p_y u = \deg(u, \Gamma_0)-\sum_{i=1}^k \deg(u,\p B_\rho(a_i))-\sum_{l=1}^n \deg(u, \Gamma_l).\]
For the second point we observe that since \(V_i\) is locally constant on \(\p \O_\rho\), an integration by parts gives
\[ \int_{\O_\rho} v \wedge (\p_xu \p_yV_i-\p_yu \p_x V_i)=\int_{\O_\rho} u \wedge (\p_xv \p_yV_i-\p_yv \p_x V_i).\]
Hence, by using the first point we find
\begin{multline*}
2\pi|\deg(u,\p B_\rho(a_i))-\deg(v,\p B_\rho(a_i))|  \\
 =\left | \int_{\O_\rho} (u-v) \wedge \left[ (\p_x V_i \p_yu-\p_yV_i \p_xu)+(\p_xV_i\p_yv-\p_yV_i\p_xv) \right] \right| \\
 \leq 4\|u-v\|_{L^2}\|V_i\|_{\mathcal{C}^1}(\|\nabla u\|_{L^2}+\|\nabla v\|_{L^2}).
\end{multline*}
\end{proof}

\begin{proof}(proof of Proposition \ref{prop:existence_Euler_Lagrange}) 
We take a minimizing sequence \((u_n)_n\) for the Dirichlet energy \(E\) in the class \(\E_{g,\rho}\). Since it is bounded in \(H^1\),  we can extract a subsequence weakly converging to some \(u_\rho \in H^1(\O_\rho,\R^2)\). Up to other subsequences, we can assume that \(u_n\) converges strongly to \(u_\rho\) in \(L^2(\O_\rho)\) and \(u_n\) converges almost everywhere to \(u_\rho\). Hence \(u \in H^1(\O_\rho,\mathbb{S}^1)\) and by using Lemma \ref{lem:approxiamte_degree} we find that \( \deg(u_\rho,\p B_\rho(a_i))=\deg(u_n,\p B_\rho(a_i))\) for all \(n\in \mathbb{N}^*\) and \(i=1,\dots,k\). With the weak continuity of the trace operator and the lower semi-continuity of the Dirichlet energy we are able to conclude to the existence. To derive the Euler-Lagrange equations we can make variations of the form \(u_\rho+t\varphi\) for \(t\) small and \(\varphi \in \C^\infty_c(\O_\rho,\R^2)\) and \(u_\rho e^{it \psi}\) for \(t\) small and \(\psi \in \C^\infty( \overline{\O}_\rho,\R)\) with \(\psi\) vanishing on \(\p G\). These variations do preserve the class \(\E_{g,\rho}\). The regularity of \(u_\rho\) follows from the regularity for minimizing harmonic maps due to \cite{Morrey_1948} (see also \cite{Helein_1991}). The regularity up to the boundaries \(\p B_\rho(a_i)\) can be proved as in \cite[Lemma 4.4]{Berlyand_Mironescu_unpublished}.
\end{proof}

In the rest of the paper we will make an intensive use of the \textit{current} of a function.
\begin{definition}
If \( U \subset \R^2\) is a  bounded open set, for \(u \in L^p\cap W^{1,p'}(U,\mathbb{C})\), with \(1 \leq p,p'\leq +\infty\) and \(1/p+1/p'=1\), we define the \textit{current} associated to \(u\) by
\begin{equation}
j(u):=u \wedge \nabla u= (u\wedge \p_xu, u \wedge \p_y u).
\end{equation}
\end{definition}

\begin{lemma}\label{lem:equation_current}
If \(u_\rho\) is a solution of the minimization problem \eqref{eq:min_w_d_rho} given by Proposition~\ref{prop:existence_Euler_Lagrange} then its current \(j(u_\rho)\) satisfies
\begin{equation}
\left\{
\begin{array}{rcll}
\dive j({u_\rho})=0  & \text{ and } & \curl j(u_\rho)=0 \text{ in } \O_\rho,\\
j({u_\rho}) \cdot \nu =0  \text{ on } \p B_\rho(a_i), \ i=1,\dots, k,  & \text{ and } & j({u_\rho}) \cdot \tau = g \wedge \p_\tau g  \text{ on } \p G.
\end{array}
\right.
\end{equation}
\end{lemma}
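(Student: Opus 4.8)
The plan is to verify the four identities directly, using that Proposition~\ref{prop:existence_Euler_Lagrange} provides enough regularity (\(u_\rho\) is smooth inside \(\O_\rho\) and \(\C^1\) up to \(\p G\), and smooth up to each \(\p B_\rho(a_i)\)) for all the derivatives below to make classical sense, together with the Euler-Lagrange system and the pointwise constraint \(|u_\rho|=1\). I will use repeatedly that \(\wedge\) is bilinear and antisymmetric, so that \(a\wedge a=0\) and the Leibniz rule \(\p_k(a\wedge b)=\p_k a\wedge b+a\wedge\p_k b\) holds.

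For the divergence, I would expand, dropping the terms \(\p_x u_\rho\wedge\p_x u_\rho\) and \(\p_y u_\rho\wedge\p_y u_\rho\) by antisymmetry,
\[
\dive j(u_\rho)=\p_x(u_\rho\wedge\p_x u_\rho)+\p_y(u_\rho\wedge\p_y u_\rho)=u_\rho\wedge\Delta u_\rho .
\]
The first line of the Euler-Lagrange system gives \(\Delta u_\rho=-|\nabla u_\rho|^2u_\rho\), hence \(u_\rho\wedge\Delta u_\rho=-|\nabla u_\rho|^2(u_\rho\wedge u_\rho)=0\). For the curl, the same expansion, now using the cancellation of the mixed second derivatives \(u_\rho\wedge\p_x\p_y u_\rho=u_\rho\wedge\p_y\p_x u_\rho\), leaves
\[
\curl j(u_\rho)=\p_x u_\rho\wedge\p_y u_\rho-\p_y u_\rho\wedge\p_x u_\rho=2\,\p_x u_\rho\wedge\p_y u_\rho .
\]
At this point I would invoke the target constraint: differentiating \(|u_\rho|^2=1\) yields \(u_\rho\cdot\p_k u_\rho=0\), so both \(\p_x u_\rho\) and \(\p_y u_\rho\) are collinear with \(iu_\rho\), whence \(\p_x u_\rho\wedge\p_y u_\rho=0\). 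This is precisely the \(\mathbb{S}^1\)-valued identity already exploited in the proof of Lemma~\ref{lem:approxiamte_degree}.

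For the boundary conditions, pairing the vector \(j(u_\rho)\) against \(\nu\) or \(\tau\) amounts, by bilinearity, to wedging \(u_\rho\) against the corresponding directional derivative, i.e.\ \(j(u_\rho)\cdot\nu=u_\rho\wedge\p_\nu u_\rho\) and \(j(u_\rho)\cdot\tau=u_\rho\wedge\p_\tau u_\rho\). On each \(\p B_\rho(a_i)\) the third line of the Euler-Lagrange system states exactly \(u_\rho\wedge\p_\nu u_\rho=0\), giving \(j(u_\rho)\cdot\nu=0\). On \(\p G\) we have \(u_\rho=g\), and since the tangential derivative along the boundary is determined by the boundary trace, \(\p_\tau u_\rho=\p_\tau g\) there, so \(j(u_\rho)\cdot\tau=g\wedge\p_\tau g\).

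All of these computations are routine once the regularity of \(u_\rho\) is granted; the only step that genuinely uses that the target is \(\mathbb{S}^1\) rather than all of \(\mathbb{C}\) is the vanishing of \(\p_x u_\rho\wedge\p_y u_\rho\), and that is the one point I would take care to justify carefully from \(|u_\rho|=1\).
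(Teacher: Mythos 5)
Your proof is correct and follows essentially the same route as the paper: expand $\dive j(u_\rho)$ and $\curl j(u_\rho)$ via the Leibniz rule, use the Euler--Lagrange equation together with $u_\rho\wedge u_\rho=0$ for the divergence, the pointwise identity $\p_x u_\rho\wedge\p_y u_\rho=0$ (forced by $|u_\rho|=1$) for the curl, and read off the boundary conditions from the trace conditions on $u_\rho$. Your explicit justification that $\p_x u_\rho$ and $\p_y u_\rho$ are both collinear with $iu_\rho$ is a welcome bit of detail the paper leaves implicit.
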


\begin{proof}
We compute
\begin{align*}
\dive j(u_\rho)&= \p_x (u_\rho \wedge \p_xu_\rho)+\p_y (u_\rho \wedge \p_y u_\rho) =u_\rho \wedge \Delta u_\rho = 0.
\end{align*}
In the same way 
\[ \p_x (u_\rho \wedge \p_y u_\rho)-\p_y (u_\rho \wedge \p_x u_\rho)=2 \p_x u_\rho \wedge \p_y u_\rho =0.\]
The information for \(j(u_\rho)\) on the boundary comes from the information on \(u_\rho\) on the boundary.
\end{proof}
We now use the generalized Poincar\'e Lemma \ref{lem:generalized-Poincare} to derive a Hodge decomposition of the current \(j(u_\rho)\). First we prove

\begin{proposition}\label{prop:exist_Phi_rho}
There exists a function \(\Phi_\rho \in H^1(\O_\rho,\R)\), satisfying
\begin{equation}\label{eq:phi_rho}
\left\{
\begin{array}{rcll}
\Delta \Phi_\rho &=&0 &\text{ in } \O_\rho,\\
\Phi_\rho&=& \text{cst.} & \text{ on } \p B_\rho(a_i),\ i=1,\dots,k,\\
\int_{\p B_\rho(a_i)} \p_\nu \Phi_\rho &=& 2\pi d_i & \text{ for }i=1,\dots, k, \\
\p_\nu \Phi_\rho &=& g \wedge \p_\tau g &\text{ on } \p G, \\
\int_{\p G} \Phi_\rho &=&0.
\end{array}
\right.
\end{equation}
We also have \(\Phi_\rho \in \C^\infty(\O_\rho,\R)\cap \C^1(\overline{\O}_\rho,\R)\) and is smooth up to the boundaries \(\p B_\rho(a_i), i=1,\dots,k\).

Moreover, there exist a unique \(v_\rho \in H^1(\O_\rho,\mathbb{S}^1)\) and a unique \(\theta_{l,\rho}\in [-\pi,\pi[\), \(l=1,\dots,n\), such that \( j({v_\rho})=\nabla^\perp \Phi_\rho\) in \(\O_\rho\), \(v_\rho=g\) on \( \Gamma_0 \subset \p G\) and \(v_\rho=e^{-i\theta_{l,\rho}}g\) on \(\Gamma_l\), \(l=1,\dots,n\).
\end{proposition}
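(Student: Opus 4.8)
The plan is to construct $\Phi_\rho$ by applying the generalized Poincaré Lemma to the current $j(u_\rho)$, and then construct $v_\rho$ by lifting. First I would recall from Lemma 2.4 that $j(u_\rho)$ is divergence-free in $\O_\rho$, with $j(u_\rho)\cdot\nu=0$ on each $\p B_\rho(a_i)$ and $j(u_\rho)\cdot\tau=g\wedge\p_\tau g$ on $\p G$. The boundary $\p\O_\rho$ has connected components $\Gamma_0,\Gamma_1,\dots,\Gamma_n$ and $\p B_\rho(a_1),\dots,\p B_\rho(a_k)$. To invoke the first part of Lemma 2.1 for $D=j(u_\rho)$ I need $\int_{\Gamma}D\cdot\nu=0$ on each component. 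On the circles this is immediate since $j(u_\rho)\cdot\nu=0$ there. On the $\Gamma_l$ (with $l=0,\dots,n$) the flux $\int_{\Gamma_l}j(u_\rho)\cdot\nu$ need not vanish, so Lemma 2.1 does not apply directly to $D=j(u_\rho)$; instead I would apply the scalar-potential construction to the \emph{rotated} field. More cleanly, since $\dive j(u_\rho)=0$ on the whole $\O_\rho$ and $j(u_\rho)\cdot\nu=0$ on the inner circles, the total flux through $\p\O_\rho$ vanishes, and the object I want is a function $\Phi_\rho$ with $\nabla^\perp\Phi_\rho$ equal to $j(u_\rho)$ only up to the harmonic-field components $\nabla\varphi_l$; this is precisely the point of the Hodge decomposition announced in the introduction. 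Thus I would instead \emph{define} $\Phi_\rho$ as the solution of the linear elliptic boundary value problem \eqref{eq:phi_rho} and verify it is well posed, deferring the relation $j(v_\rho)=\nabla^\perp\Phi_\rho$ to the lifting step.

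For the well-posedness of \eqref{eq:phi_rho} I would argue as follows. The unknowns are $\Phi_\rho$ together with the $k$ constants $c_i$ (its values on $\p B_\rho(a_i)$); the data are the Neumann condition $\p_\nu\Phi_\rho=g\wedge\p_\tau g$ on $\p G$ and the prescribed integrals $\int_{\p B_\rho(a_i)}\p_\nu\Phi_\rho=2\pi d_i$. The solvability condition for a mixed Neumann problem with $\Phi_\rho$ locally constant on the inner circles is the global compatibility $\int_{\p G}(g\wedge\p_\tau g)+\sum_{i=1}^k 2\pi d_i = 0$, up to a sign coming from orientation conventions. I would check that this is exactly the constraint $\sum_i d_i+\sum_l\deg(g,\Gamma_l)=\deg(g,\Gamma_0)$ of \eqref{eq:relation_degrees}, since $\int_{\p G}g\wedge\p_\tau g=2\pi\big(\deg(g,\Gamma_0)-\sum_l\deg(g,\Gamma_l)\big)$ by the definition \eqref{eq:degree} of the degree and the orientation of the inner boundaries. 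Uniqueness follows after fixing the normalization $\int_{\p G}\Phi_\rho=0$; an energy argument (testing $\Delta\Phi_\rho=0$ against $\Phi_\rho$) gives uniqueness of the gradient, and the integral normalization removes the additive constant. Interior smoothness is elliptic regularity, and smoothness up to $\p G$ and up to each $\p B_\rho(a_i)$ follows from boundary elliptic regularity together with the smoothness of $g$ and of the circles.

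For the second part I would produce $v_\rho$ via Lemma 2.3 applied to $F:=\nabla^\perp\Phi_\rho$. First, $\curl F=\curl\nabla^\perp\Phi_\rho=\Delta\Phi_\rho=0$ in $\O_\rho$, so the curl hypothesis holds. The quantization hypothesis \eqref{eq:multi_valued} requires $\int_{\Gamma}F\cdot\tau\in2\pi\mathbb{Z}$ on each boundary component; but $\int_{\Gamma}\nabla^\perp\Phi_\rho\cdot\tau=\int_{\Gamma}\p_\nu\Phi_\rho$, which on $\p B_\rho(a_i)$ equals $2\pi d_i\in2\pi\mathbb{Z}$ by construction, while on each $\Gamma_l$ (for $l=0,\dots,n$) the integral $\int_{\Gamma_l}\p_\nu\Phi_\rho=\int_{\Gamma_l}g\wedge\p_\tau g=2\pi\deg(g,\Gamma_l)\in2\pi\mathbb{Z}$. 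Hence Lemma 2.3 furnishes a map $v_\rho\in\C^1(\overline{\O}_\rho,\mathbb{S}^1)$, unique up to a global phase, with $j(v_\rho)=\nabla^\perp\Phi_\rho$. Since $j(v_\rho)\cdot\tau=\nabla^\perp\Phi_\rho\cdot\tau=\p_\nu\Phi_\rho=g\wedge\p_\tau g=j(g)\cdot\tau$ on $\p G$, the traces of $v_\rho$ and $g$ have the same current on each boundary curve, so on each component $v_\rho$ and $g$ differ by a constant phase; using the remaining free global phase I normalize $v_\rho=g$ on $\Gamma_0$, and on each $\Gamma_l$ we then get $v_\rho=e^{-i\theta_{l,\rho}}g$ for a uniquely determined $\theta_{l,\rho}\in[-\pi,\pi[$. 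The main obstacle, and the step deserving the most care, is verifying the compatibility condition making \eqref{eq:phi_rho} solvable and checking that it coincides with the degree relation \eqref{eq:relation_degrees}; everything else is standard elliptic theory plus the already-proved Lemmas 2.1 and 2.3.
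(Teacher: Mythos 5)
Your proposal is correct, and its second half --- lifting $F=\nabla^\perp\Phi_\rho$ via Lemma \ref{lem:lifting} after checking $\curl \nabla^\perp\Phi_\rho=\Delta\Phi_\rho=0$ and the circulation quantization $\int_{\p B_\rho(a_i)}\p_\nu\Phi_\rho=2\pi d_i$, $\int_{\Gamma_l}\p_\nu\Phi_\rho=2\pi\deg(g,\Gamma_l)$, then using the constancy of $v_\rho\bar g$ on each boundary component and the free global phase to normalize $v_\rho=g$ on $\Gamma_0$ --- is exactly the paper's argument. The only genuine divergence is the existence step for \eqref{eq:phi_rho}: the paper obtains $\Phi_\rho$ as the minimizer of the strictly convex functional $\frac12\int_{\O_\rho}|\nabla\phi|^2+2\pi\sum_{i=1}^k d_i\,\phi_{|\p B_\rho(a_i)}-\int_{\p G}\phi\,(g\wedge\p_\tau g)$ over the constrained space $V_\rho$ (functions locally constant on the circles with $\int_{\p G}\phi=0$), which delivers existence, uniqueness and the boundary conditions in one stroke, whereas you treat \eqref{eq:phi_rho} as a mixed linear boundary value problem and verify its compatibility condition, which you correctly identify --- via the flux balance computed with the outward normal to $\O_\rho$, hence the minus signs on the inner components $\Gamma_1,\dots,\Gamma_n$ --- as exactly the degree relation \eqref{eq:relation_degrees}. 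Your version in fact makes explicit a point the paper leaves tacit: \eqref{eq:relation_degrees} is what guarantees that the natural boundary condition of the paper's constrained minimization reads $\p_\nu\Phi_\rho=g\wedge\p_\tau g$ exactly, with no constant Lagrange-multiplier correction arising from the normalization $\int_{\p G}\phi=0$. Finally, your opening detour through $j(u_\rho)$ is correctly self-corrected: since $\int_{\Gamma_l}j(u_\rho)\cdot\nu$ need not vanish, Lemma \ref{lem:generalized-Poincare} does not make $j(u_\rho)$ a global $\nabla^\perp$ of a single function, and the discrepancy $\nabla H_\rho$ is precisely the content of Proposition \ref{prop:Phi_rho}, not of this statement.
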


\begin{proof}
The existence follows from the fact that a solution to \eqref{eq:phi_rho} is a minimizer of
\[ \frac12 \int_{\O_\rho} |\nabla \phi|^2+2\pi \sum_{i=1}^k d_i \phi_{|\p B_\rho(a_i)} -\int_{\p G} \phi \left( g \wedge \p_\tau g \right) \]
in the space \[ V_\rho:=\{\phi\in H^1(\O_\rho,\R); \varphi=\text{cst.}=\phi_{|\p B_\rho(a_i)} \text{ on each } \p B_\rho(a_i), \ i=1,\dots,k, \int_{\p G} \varphi=0\}.\] The uniqueness follows because the functional to be minimized is strictly convex in \(V_\rho\). The smoothness of \(\Phi_\rho\) follows from the regularity for harmonic functions.
Since the vector field \( (-\p_y \Phi_\rho, \p_x \Phi_\rho)\) satisfies the assumption of Lemma \ref{lem:lifting}, we can find a function \( v_\rho \in H^1(\O_\rho,\mathbb{S}^1)\) satisfying \( j({v_\rho})=\nabla^\perp \Phi_\rho\) in \(\O_\rho\), furthermore \(v_\rho \) is unique up to a phase.
Since \(v_\rho\) satisfies
\[ v_\rho \wedge \p_\tau v_\rho= \p_\nu\Phi_\rho =g \wedge \p_\tau g \text{ on } \p G \]
we can choose an appropriate phase to prescribe \( v_\rho =g \) on \( \Gamma_0\), and then we have \( v_\rho=e^{-i \theta_{l,\rho}}g \) on each \(\Gamma_l\), \(l=1,\dots,n\) for some \( \theta_{l,\rho}=\theta_{l,\rho}(g,\{a_i\},\{d_i\}) \in [-\pi,\pi[\).
\end{proof}

\begin{proposition}\label{prop:Phi_rho}
Let \(u_\rho\) be a solution of \eqref{eq:min_w_d_rho} and let \(\Phi_\rho\) be a solution of \eqref{eq:phi_rho} then there exists \(H_\rho\in \C^\infty(\overline{\O}_\rho,\R)\) such that
\begin{equation}\label{eq:Hodge_of_urho}
j({u_\rho})= \nabla^\perp \Phi_\rho +\nabla H_\rho, \quad u_\rho=v_\rho e^{i H_\rho},
\end{equation}
\begin{equation}\label{eq:H_rho}
\left\{
\begin{array}{rcll}
\Delta H_\rho &=&0 &\text{ in } \O_\rho, \\
\p_\nu H_\rho &=&0 &\text{ on } \p B_\rho(a_i),\ i=1,\dots,k,\\
H_\rho &=& \alpha_{l,\rho}(g,\{a_i\},\{d_i\}) &\text{ on } \Gamma_l,\ l=1\dots,n, \\
H_\rho&=&0  &\text{ on } \Gamma_0,
\end{array}
\right.
\end{equation}
where \(\alpha_{l,\rho}(g,\{a_i\},\{d_i\})=\theta_{l,\rho}+2\pi\mathbb{Z}\), \(l=1,\dots,n\), with \(\theta_{l,\rho}\) defined in Proposition \ref{prop:exist_Phi_rho}. In particular, we have uniqueness of a solution of \eqref{eq:min_w_d_rho}.
\end{proposition}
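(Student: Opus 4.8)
The plan is to construct $H_\rho$ by applying the curl-part of the generalized Poincaré Lemma \ref{lem:generalized-Poincare} to the vector field
\[ F := j(u_\rho)-\nabla^\perp\Phi_\rho = j(u_\rho)-j(v_\rho). \]
By Lemma \ref{lem:equation_current} we have $\curl j(u_\rho)=0$ in $\O_\rho$, while $\curl\nabla^\perp\Phi_\rho=\Delta\Phi_\rho=0$ since $\Phi_\rho$ is harmonic; hence $\curl F=0$. To invoke the lemma it remains to check that the circulation of $F$ along each connected component of $\p\O_\rho$ vanishes. Writing $\int_\Gamma F\cdot\tau = 2\pi\bigl(\deg(u_\rho,\Gamma)-\deg(v_\rho,\Gamma)\bigr)$, this is precisely the requirement that $u_\rho$ and $v_\rho$ carry the same degree on every boundary curve, which is where the multiply connected topology enters and is the conceptual heart of the argument.

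I would verify this component by component. On $\p B_\rho(a_i)$, the identity $\nabla^\perp\Phi_\rho\cdot\tau=\p_\nu\Phi_\rho$ (valid with the convention that $(\nu,\tau)$ is direct) together with the third line of \eqref{eq:phi_rho} gives $2\pi\deg(v_\rho,\p B_\rho(a_i))=\int_{\p B_\rho(a_i)}\p_\nu\Phi_\rho = 2\pi d_i = 2\pi\deg(u_\rho,\p B_\rho(a_i))$. On $\Gamma_0$ both maps equal $g$, so the degrees coincide; on $\Gamma_l$ we have $v_\rho=e^{-i\theta_{l,\rho}}g$ and $u_\rho=g$, and multiplication by a constant unimodular factor does not change the degree. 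Thus $\int_\Gamma F\cdot\tau=0$ on every component and Lemma \ref{lem:generalized-Poincare} produces $H_\rho\in\C^2(\O_\rho)\cap\C^1(\overline{\O}_\rho)$ with $F=\nabla H_\rho$, i.e.\ $j(u_\rho)=\nabla^\perp\Phi_\rho+\nabla H_\rho$. Harmonicity is immediate, since $\Delta H_\rho=\dive\nabla H_\rho=\dive j(u_\rho)-\dive\nabla^\perp\Phi_\rho=0$. For the Neumann condition, on $\p B_\rho(a_i)$ one has $\p_\nu H_\rho=F\cdot\nu=j(u_\rho)\cdot\nu-\nabla^\perp\Phi_\rho\cdot\nu$, where $j(u_\rho)\cdot\nu=u_\rho\wedge\p_\nu u_\rho=0$ by Lemma \ref{lem:equation_current} and $\nabla^\perp\Phi_\rho\cdot\nu=-\p_\tau\Phi_\rho=0$ because $\Phi_\rho$ is constant on $\p B_\rho(a_i)$; hence $\p_\nu H_\rho=0$.

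Next I would identify the phase. As $w:=u_\rho\overline{v_\rho}$ is $\mathbb{S}^1$-valued, the computation in the proof of Lemma \ref{lem:lifting} gives $j(w)=j(u_\rho)-j(v_\rho)=\nabla H_\rho$, whence $j(we^{-iH_\rho})=0$; since $we^{-iH_\rho}$ is unimodular this forces $\nabla(we^{-iH_\rho})=0$, so $w=c\,e^{iH_\rho}$ for a constant $c\in\mathbb{S}^1$. Absorbing $c$ into the still-free additive constant of $H_\rho$ yields $u_\rho=v_\rho e^{iH_\rho}$. The boundary values are then read off from this identity: on $\Gamma_0$, $u_\rho=v_\rho=g$ gives $e^{iH_\rho}=1$, and after subtracting the resulting integer multiple of $2\pi$ we may take $H_\rho=0$ on $\Gamma_0$; on $\Gamma_l$, $u_\rho=g$ and $v_\rho=e^{-i\theta_{l,\rho}}g$ give $e^{iH_\rho}=e^{i\theta_{l,\rho}}$, so by continuity $H_\rho$ is constant on $\Gamma_l$ and equal to some $\alpha_{l,\rho}\in\theta_{l,\rho}+2\pi\mathbb{Z}$. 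This establishes \eqref{eq:Hodge_of_urho}--\eqref{eq:H_rho}, and the smoothness $H_\rho\in\C^\infty(\overline{\O}_\rho)$ follows from elliptic regularity, the boundary data being smooth and of a single type on each (disjoint, smooth) component of $\p\O_\rho$.

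Finally, for uniqueness I would exploit the decomposition to split the energy. Since $u_\rho$ is unimodular, $E(u_\rho)=\tfrac12\int_{\O_\rho}|\nabla\Phi_\rho|^2+\int_{\O_\rho}\nabla^\perp\Phi_\rho\cdot\nabla H_\rho+\tfrac12\int_{\O_\rho}|\nabla H_\rho|^2$. Integrating the cross term by parts and using $\dive\nabla^\perp\Phi_\rho=0$ gives $\int_{\O_\rho}\nabla^\perp\Phi_\rho\cdot\nabla H_\rho=-\int_{\p\O_\rho}H_\rho\,\p_\tau\Phi_\rho$, and each boundary contribution vanishes: $\p_\tau\Phi_\rho=0$ where $\Phi_\rho$ is constant, $H_\rho=0$ on $\Gamma_0$, and $\int_{\Gamma_l}\p_\tau\Phi_\rho=0$ on the closed curve $\Gamma_l$ where $H_\rho$ is constant. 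As $\Phi_\rho$ is the unique solution of \eqref{eq:phi_rho} and independent of the chosen minimizer, minimizing $E$ over $\E_{g,\rho}$ reduces to minimizing the strictly convex functional $\tfrac12\int_{\O_\rho}|\nabla H|^2$ among harmonic $H$ with $\p_\nu H=0$ on the circles, $H=0$ on $\Gamma_0$, and $H$ equal to an admissible constant on each $\Gamma_l$. Strict convexity in the finite set of boundary constants determines $H_\rho$, hence the current $j(u_\rho)$, and by Lemma \ref{lem:lifting} together with the normalization $u_\rho=g$ on $\Gamma_0$ the minimizer $u_\rho$ is unique. I expect the circulation computation of the first two paragraphs to be the main obstacle, as it is the step that genuinely uses the topology; the uniqueness, once the energy splits, is the remaining delicate point, the admissible boundary constants being constrained to the cosets $\theta_{l,\rho}+2\pi\mathbb{Z}$ rather than to all of $\R$.
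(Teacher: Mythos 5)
Your construction of \(H_\rho\) is correct and follows essentially the same route as the paper: apply the curl half of Lemma \ref{lem:generalized-Poincare} to \(j(u_\rho)-\nabla^\perp \Phi_\rho\), then identify \(u_\rho=v_\rho e^{iH_\rho}\) via the uniqueness part of Lemma \ref{lem:lifting} and read off \(\alpha_{l,\rho}\in\theta_{l,\rho}+2\pi\mathbb{Z}\) from the boundary values of \(v_\rho\). The only cosmetic difference is on \(\p G\): the paper kills the circulations there by the pointwise identity \(j(u_\rho)\cdot\tau=g\wedge\p_\tau g=\p_\nu\Phi_\rho\), whereas you pass through equality of degrees; both are fine. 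Your explicit verifications of \(\Delta H_\rho=0\) and of \(\p_\nu H_\rho=0\) on the circles (using \(j(u_\rho)\cdot\nu=0\) from Lemma \ref{lem:equation_current} and \(\p_\tau\Phi_\rho=0\) where \(\Phi_\rho\) is constant) spell out what the paper leaves implicit, and your observation that the cross term \(\int_{\O_\rho}\nabla^\perp\Phi_\rho\cdot\nabla H_\rho\) vanishes \emph{identically} (since \(\int_{\Gamma_l}\p_\tau\Phi_\rho=0\) over each closed curve) is correct, and indeed sharper than the paper's bookkeeping, which carries the terms \(\sum_{l}\int_{\Gamma_l}\alpha_{l,\rho}\,\p_\tau\Phi_\rho\) along without noting that they vanish.

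The genuine soft spot is your final uniqueness step. The energy splitting correctly reduces \eqref{eq:min_w_d_rho} to minimizing the positive definite quadratic form \(Q(\alpha)=\tfrac12\int_{\O_\rho}|\nabla H_\alpha|^2\) over the discrete coset \(\alpha\in\prod_{l=1}^n\bigl(\theta_{l,\rho}+2\pi\mathbb{Z}\bigr)\), but the assertion that ``strict convexity in the finite set of boundary constants determines \(H_\rho\)'' does not hold: a strictly convex quadratic restricted to an affine lattice can attain its minimum at more than one point. Concretely, for \(n=1\) one has \(Q(\alpha)=c\,\alpha^2\) with \(c>0\), and if the data produce \(\theta_{1,\rho}=-\pi\), then \(\alpha=-\pi\) and \(\alpha=\pi\) are both admissible, give equal energy, and yield two distinct competitors \(v_\rho e^{iH_{\pm\pi}}\). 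So your argument establishes uniqueness only when the minimizer of \(Q\) over the coset is unique, i.e., outside such symmetric (tie) configurations. In fairness, the paper's own treatment is no more complete: after establishing the quantization \(\alpha_{l,\rho}=\theta_{l,\rho}+2\pi\mathbb{Z}\) it disposes of uniqueness with the single sentence ``this means also that \(u_\rho\) is uniquely determined,'' without confronting possible ties. You have therefore made explicit --- but not closed --- a step the paper glosses over; a complete proof would need either a separate argument showing the optimal lattice point is unique or an explicit exclusion of the degenerate case.
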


\begin{proof}
This follows from Lemma \ref{lem:generalized-Poincare} since \(j({u_\rho})-\nabla^\perp \Phi_\rho\) verifies that \(\curl \left(j(u_\rho)- \nabla^\perp \Phi_\rho \right)=0\), \( \left(j(u_\rho)\cdot \tau- \p_\nu \Phi_\rho\right) =g\wedge \p_\tau g- g \wedge \p_\tau g=0\) on \(\p G\) and \(\int_{\p B_\rho(a_i)} \left(j(u_\rho) \cdot \tau- \p_\nu \Phi_\rho\right)=2\pi(d_i-d_i)=0\) for \(i=1,\dots,k\). Note that \(H_\rho\) is defined up to a constant and that is why we can impose \(H_\rho=0\) on \(\Gamma_0\). Now we have that
\begin{align*}
 j(v_\rho e^{iH_\rho})&=  v_\rho e^{iH_\rho} \wedge \nabla (v_\rho e^{iH_\rho})
 = v_\rho e^{i H_\rho} \wedge  \left( \nabla v_\rho e^{iH_\rho}+i v_\rho \nabla H_\rho e^{i H_\rho} \right) \\
 &=v_\rho \wedge \nabla v_\rho +\nabla H_\rho =\nabla^\perp \Phi_\rho +\nabla H_\rho.
\end{align*}
From Lemma \ref{lem:lifting}, this means that \(u_\rho=v_\rho e^{i H_\rho}\) up to a  phase, but since \(H_\rho=0\) on \( \Gamma_0\) and \(v_\rho= g \) on \(\Gamma_0\) we have \( u_\rho =v_\rho e^{i H_\rho}.\) Hence, since from Proposition \ref{prop:exist_Phi_rho} \(v_\rho=e^{-i\theta_{l,\rho}}g\) on \(\Gamma_l\), \(l=1,\dots,n\) we find that \(\alpha_{l,\rho}=\theta_{l,\rho}+2\pi \mathbb{Z}\). This means also that \(u_\rho\) is uniquely determined.
\end{proof}

We recall that \(\Phi_0\in \C^\infty( G \setminus \{a_1,\dots,a_k\})\) is the solution of \eqref{eq:Phi_0}.

\begin{lemma}\label{lem:existence_v_0}
 There exists a unique  \(v_0 \in \C^\infty ( G \setminus \{a_1,\dots,a_k\}, \mathbb{S}^1)\) such that
 \begin{equation}
 \left\{
 \begin{array}{rcll}
  j(v_0)&=&v_0 \wedge \nabla v_0= \nabla^\perp \Phi_0 &\text{ in } G \setminus \{a_1,\dots,a_k\}, \\
 v_0&=&g & \text{ on } \Gamma_0 \subset G, \\
 v_0&=&e^{-i \theta_l} g & \text{ on } \Gamma_l, \ l=1,\dots,n,
\end{array}
\right.
 \end{equation}
 for some \(\theta_l=\theta_l(g,\{a_i\},\{d_i\}) \in [-\pi,\pi[\) for \(l=1,\dots,n\).
\end{lemma}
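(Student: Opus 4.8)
The plan is to construct $v_0$ by applying the lifting Lemma~\ref{lem:lifting} to the vector field $\nabla^\perp \Phi_0$ on the domain $G\setminus\{a_1,\dots,a_k\}$, but the presence of point singularities means I cannot apply the lemma directly. Instead I would work on the punctured-and-holed domain $\O_\rho = G\setminus\cup_i \bar B_\rho(a_i)$ for small $\rho$, where $\Phi_0$ is smooth up to the inner boundaries, and then pass to the limit $\rho\to 0$. So first I would verify the hypotheses of Lemma~\ref{lem:lifting} for $D=\nabla^\perp\Phi_0=(-\p_y\Phi_0,\p_x\Phi_0)$ on $\O_\rho$: we have $\curl(\nabla^\perp\Phi_0)=\Delta\Phi_0=0$ away from the $a_i$, which holds on $\O_\rho$; and the circulation condition $\int_{\Gamma}\nabla^\perp\Phi_0\cdot\tau\in 2\pi\mathbb Z$ must be checked on every connected component $\Gamma$ of $\p\O_\rho$.

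The key computation is the circulation, namely $\int_{\Gamma}\nabla^\perp\Phi_0\cdot\tau=\int_{\Gamma}\p_\nu\Phi_0$. On each small circle $\p B_\rho(a_i)$, the definition \eqref{eq:Phi_0} gives $\Delta\Phi_0=2\pi\sum_i d_i\delta_{a_i}$, so by the divergence theorem $\int_{\p B_\rho(a_i)}\p_\nu\Phi_0=2\pi d_i\in 2\pi\mathbb Z$. On $\Gamma_0$ I would use the boundary condition $\p_\nu\Phi_0=g\wedge\p_\tau g$ together with $\int_{\Gamma_0}g\wedge\p_\tau g=2\pi\deg(g,\Gamma_0)$; similarly on each $\Gamma_l$ the circulation is $2\pi\deg(g,\Gamma_l)$. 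All these are integer multiples of $2\pi$, so \eqref{eq:multi_valued} holds on each component and Lemma~\ref{lem:lifting} produces a map $v_0\in\C^1(\overline{\O}_\rho,\mathbb S^1)$, unique up to a global phase, with $j(v_0)=\nabla^\perp\Phi_0$ on $\O_\rho$. Since $\Phi_0$ is smooth on $G\setminus\{a_1,\dots,a_k\}$, elliptic regularity and the construction $v_0=e^{i\psi}$ upgrade $v_0$ to $\C^\infty(G\setminus\{a_1,\dots,a_k\},\mathbb S^1)$; the definition of $v_0$ is independent of $\rho$ because the lifting is determined by $\nabla^\perp\Phi_0$ on the common domain, so the maps for different $\rho$ agree and patch to a single map on $G\setminus\{a_1,\dots,a_k\}$.

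It remains to fix the phase and read off the boundary values. Because $j(v_0)\cdot\tau=\nabla^\perp\Phi_0\cdot\tau=\p_\nu\Phi_0=g\wedge\p_\tau g=j(g)\cdot\tau$ on $\p G$, the maps $v_0$ and $g$ have the same current restricted to each boundary curve; hence on each connected component $v_0$ and $g$ differ by a constant phase. I would use the freedom of the global phase to normalize $v_0=g$ on $\Gamma_0$, which is legitimate since $\Gamma_0$ is connected and the tangential currents match there. On each inner component $\Gamma_l$ the ratio $v_0\bar g$ is then a constant in $\mathbb S^1$, say $e^{-i\theta_l}$ with a unique representative $\theta_l\in[-\pi,\pi[$, giving $v_0=e^{-i\theta_l}g$ on $\Gamma_l$; by construction $\theta_l$ depends only on $g,\{a_i\},\{d_i\}$ through $\Phi_0$. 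Uniqueness of $v_0$ follows from the uniqueness-up-to-phase clause of Lemma~\ref{lem:lifting} combined with the normalization on $\Gamma_0$.

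The main obstacle is the singular behaviour at the points $a_i$: Lemma~\ref{lem:lifting} is stated for vector fields continuous up to the boundary of a fixed smooth domain, whereas $\nabla^\perp\Phi_0$ blows up like $1/|x-a_i|$ near each $a_i$, so I must be careful that the $\rho\to 0$ limit genuinely yields a single well-defined smooth map on the full punctured domain rather than merely a family of local lifts. The circulation bookkeeping on all components of $\p\O_\rho$ simultaneously—and confirming that the integer circulations are exactly $2\pi d_i$ at the singularities (forced by the Dirac masses in \eqref{eq:Phi_0}) and $2\pi\deg(g,\Gamma_l)$ on the outer curves—is the crux that makes the construction consistent.
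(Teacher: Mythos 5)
Your proof is correct and takes essentially the same route as the paper: the paper's proof simply says to adapt Lemma~\ref{lem:lifting} (i.e.\ your path-integral lifting of $\nabla^\perp\Phi_0$, with circulations $2\pi d_i$ around the $a_i$ forced by the Dirac masses in \eqref{eq:Phi_0} and $2\pi\deg(g,\Gamma_l)$ along the boundary components) and then reads off the constant boundary phases from $\p_\nu\Phi_0=g\wedge\p_\tau g=v_0\wedge\p_\tau v_0$ on $\p G$, choosing $\theta_0=0$. Your exhaustion by the domains $\O_\rho$ and the patching argument merely make that adaptation explicit.
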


\begin{proof}
The proof of Lemma \ref{lem:lifting} can be adapted to this context to find this \(v_0\). Note that \(\p_\nu \Phi_0=g\wedge \p_\tau g=v_0\wedge \p_\tau v_0\) on \(\p G\) implies that on every connected component \(\Gamma_0,\Gamma_1,\dots,\Gamma_{n}\) of \(\p G\) we can write \(v_0=e^{-i \theta_l}g\), furthermore we can choose \(\theta_0=0\).
\end{proof}

\begin{proposition}\label{prop:conv_phi_rho}
Let \(\Phi_\rho\) be the solution to \eqref{eq:phi_rho} and \(\Phi_0\) the solution to \eqref{eq:Phi_0}, then for every \(m\in \mathbb{N}\) and every compact set \(K \subset \overline{G} \setminus \{a_1,\dots,a_k\}\) there exists \(C_{m,K}\) such that 
\begin{equation}\label{eq:estimates_derivatives}
\|\Phi_\rho-\Phi_0\|_{\C^m(K)} \leq C_{m,K} \rho .
\end{equation}
\end{proposition}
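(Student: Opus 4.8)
The plan is to estimate directly the difference \(w_\rho:=\Phi_\rho-\Phi_0\), which is harmonic in \(\O_\rho\) since \(\Delta\Phi_0=2\pi\sum_id_i\delta_{a_i}\) vanishes away from the points \(a_i\). First I would record the boundary value problem satisfied by \(w_\rho\). On \(\p G\) both \(\Phi_\rho\) and \(\Phi_0\) have the same Neumann datum \(g\wedge\p_\tau g\), so \(\p_\nu w_\rho=0\) on \(\p G\), and \(\int_{\p G}w_\rho=0\) by the normalizations. Writing \(\Phi_0(x)=\sum_j d_j\log|x-a_j|+R_0(x)\) with \(R_0\) harmonic (hence smooth) in \(G\), the function \(S_i:=\sum_{j\neq i}d_j\log|x-a_j|+R_0\) is harmonic near \(a_i\), so on \(\p B_\rho(a_i)\) one has \(\Phi_0=d_i\log\rho+S_i(x)\) with \(S_i(x)=S_i(a_i)+O(\rho)\). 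Since \(\Phi_\rho\) is constant on \(\p B_\rho(a_i)\), this shows \(w_\rho=\gamma_i^\rho+\phi_i^\rho\) on \(\p B_\rho(a_i)\), where \(\gamma_i^\rho\) is an a priori uncontrolled constant and \(\phi_i^\rho:=-(S_i-S_i(a_i))\) satisfies \(\|\phi_i^\rho\|_{L^\infty(\p B_\rho(a_i))}\le C\rho\). Crucially, because \(\Phi_\rho\) and \(\Phi_0\) carry the same flux \(2\pi d_i\) across \(\p B_\rho(a_i)\) (the logarithmic singularities, and hence the degrees, match), we get \(\int_{\p B_\rho(a_i)}\p_\nu w_\rho=0\).

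Next I would run an energy estimate in which the unknown constants \(\gamma_i^\rho\) disappear. The integration by parts \(\int_{\O_\rho}|\nabla w_\rho|^2=\int_{\p\O_\rho}w_\rho\,\p_\nu w_\rho\) loses the contribution of \(\p G\) (where \(\p_\nu w_\rho=0\)) and, on each \(\p B_\rho(a_i)\), the constant part contributes \(\gamma_i^\rho\int_{\p B_\rho(a_i)}\p_\nu w_\rho=0\) thanks to the vanishing flux, so \(\int_{\O_\rho}|\nabla w_\rho|^2=\sum_i\int_{\p B_\rho(a_i)}\phi_i^\rho\,\p_\nu w_\rho\). To turn this into a bound I would introduce the explicit extension \(\Psi:=-\sum_i\chi_i\,(S_i-S_i(a_i))\), with \(\chi_i\) a cutoff equal to \(1\) on \(B_\rho(a_i)\) and supported in \(B_{2\rho}(a_i)\), so that \(\Psi=\phi_i^\rho\) on \(\p B_\rho(a_i)\), \(\Psi=0\) near \(\p G\), and, since \(|S_i-S_i(a_i)|\le C\rho\) and \(|\nabla\chi_i|\le C/\rho\) on an annulus of area \(O(\rho^2)\), one has \(\|\nabla\Psi\|_{L^2(\O_\rho)}\le C\rho\). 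Integrating by parts again and using \(\Delta w_\rho=0\) gives \(\int_{\O_\rho}\nabla w_\rho\cdot\nabla\Psi=\sum_i\int_{\p B_\rho(a_i)}\phi_i^\rho\,\p_\nu w_\rho=\int_{\O_\rho}|\nabla w_\rho|^2\), whence \(\|\nabla w_\rho\|_{L^2(\O_\rho)}\le\|\nabla\Psi\|_{L^2(\O_\rho)}\le C\rho\) by Cauchy--Schwarz. Combined with \(\int_{\p G}w_\rho=0\) and a Poincaré inequality on a fixed subdomain \(G':=G\setminus\cup_i B_{\rho_0}(a_i)\) (valid for all small \(\rho\)), this yields \(\|w_\rho\|_{L^2(G')}\le C\rho\).

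Finally I would upgrade this \(L^2\) bound to the claimed \(\C^m\) estimate. On any compact \(K\subset\overline{G}\setminus\{a_1,\dots,a_k\}\) and for \(\rho\) small enough that \(K\cap\overline{B_\rho(a_i)}=\emptyset\), the function \(w_\rho\) is harmonic with homogeneous Neumann datum on \(\p G\), so interior elliptic estimates together with the boundary elliptic estimates of Lemma \ref{lem:elliptic_estimate_1} and Lemma \ref{lem:elliptic_estimate_2} control \(\|w_\rho\|_{\C^m(K)}\) by \(\|w_\rho\|_{L^2(K')}\) on a slightly larger set \(K'\), giving \(\|\Phi_\rho-\Phi_0\|_{\C^m(K)}\le C_{m,K}\rho\).

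The main obstacle is the presence of the shrinking holes: one cannot control the Dirichlet constants \(\gamma_i^\rho\) of \(w_\rho\) on the inner circles individually. The point of the argument is that the exact matching of the degrees \(d_i\) forces the flux of \(w_\rho\) across each \(\p B_\rho(a_i)\) to vanish, which makes these constants drop out of the energy identity; only the \(O(\rho)\) oscillation of the regular part \(R_0\) of \(\Phi_0\) survives and drives the estimate. Care is also needed to keep the Poincaré and elliptic constants uniform in \(\rho\), which is why the last two steps are carried out on domains that do not degenerate as \(\rho\to0\).
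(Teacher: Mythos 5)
Your proof is correct, but it takes a genuinely different route from the paper's. The paper's argument is a short maximum-principle estimate: it applies the BBH oscillation lemma (Lemma \ref{lem:elliptic_estimate_1}) to \(v_\rho=\Phi_0-\Phi_\rho\), using exactly the structure you identified (harmonicity in \(\O_\rho\), \(\p_\nu v_\rho=0\) on \(\p G\), zero flux across each \(\p B_\rho(a_i)\)); since \(\Phi_\rho\) is constant on each \(\p B_\rho(a_i)\) while \(\Phi_0=d_i\log\rho+S_i\) oscillates there only by \(O(\rho)\), it gets \(\sup_{\O_\rho}v_\rho-\inf_{\O_\rho}v_\rho\leq C\rho\), and the normalization \(\int_{\p G}(\Phi_\rho-\Phi_0)=0\) furnishes a point of \(\p G\) where \(v_\rho\) vanishes, whence \(\|\Phi_\rho-\Phi_0\|_{L^\infty(\O_\rho)}\leq C\rho\) on all of \(\O_\rho\), uniformly up to the shrinking holes, before concluding by elliptic estimates. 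You instead run an energy argument: the matched fluxes make the uncontrolled constants \(\gamma_i^\rho\) drop out of the boundary pairing, the explicit cutoff extension \(\Psi\) with \(\|\nabla\Psi\|_{L^2(\O_\rho)}\leq C\rho\) yields \(\|\nabla w_\rho\|_{L^2(\O_\rho)}\leq C\rho\) by the Dirichlet principle, and a Poincar\'e inequality on a fixed subdomain plus local regularity upgrade this to \(\C^m(K)\). Your computations (the decomposition \(w_\rho=\gamma_i^\rho+\phi_i^\rho\) on \(\p B_\rho(a_i)\), the two integrations by parts, the \(O(\rho)\) bound on \(\nabla\Psi\) over annuli of area \(O(\rho^2)\)) all check out, and the energy route has the merit of being independent of the appendix lemma and of the maximum principle, so it would survive in settings where the latter is unavailable; the paper's route is shorter and gives the stronger uniform \(L^\infty\) control on the whole of \(\O_\rho\), arbitrarily close to the balls \(B_\rho(a_i)\), which your method recovers only away from the singularities. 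One correction to your last step: Lemmas \ref{lem:elliptic_estimate_1} and \ref{lem:elliptic_estimate_2} are oscillation estimates of maximum-principle type, not elliptic regularity estimates, and they cannot bound \(\C^m\) norms by \(L^2\) norms; what you need there is standard interior regularity for harmonic functions together with regularity up to \(\p G\) for the homogeneous Neumann problem (e.g.\ by reflection or Schauder theory, cf.\ \cite[Theorem 2.10]{Gilbarg_Trudinger} and its boundary analogues). With that citation repaired, the proof is complete.
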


\begin{proof}
We apply Lemma \ref{lem:elliptic_estimate_1} to \(v_\rho= \Phi_0-\Phi_\rho\) which satisfies, \(\Delta v_\rho=0\) in \(\O_\rho\), \(\p_\nu v_\rho=0\) on \(\p G\), \(\int_{\p B_\rho(a_i)} \p_\nu v_\rho=0\) for \(i=1,\dots,k\).  Hence, since \(\Phi_\rho\) is constant on \(\p B_\rho(a_i)\) we find
\begin{align*}
\sup_{\O_\rho} v_\rho- \inf_{\O_\rho} v_\rho \leq \sum_{i=1}^k \left(\sup_{\p  B_\rho(a_i)} v_\rho- \inf_{\p B_\rho(a_i)} v_\rho\right) \leq  \sum_{i=1}^k \left(\sup_{\p B_\rho(a_i)} \Phi_0- \inf_{\p B_\rho(a_i)} \Phi_0 \right)\leq C\rho.
\end{align*}
Since \(\int_{\p G} (\Phi_\rho-\Phi_0)=0\), there exists a point \(x \in \p G\) such that \((\Phi_\rho-\Phi_0)(x)=0\), thus we find that \( \|\Phi_\rho-\Phi_0\|_{L^\infty(\O_\rho)} \leq C\rho\). By elliptic estimates, see e.g.\ \cite[Theorem 2.10]{Gilbarg_Trudinger}, we obtain \eqref{eq:estimates_derivatives}.
\end{proof}

We introduce 
\begin{equation}
\tilde{\E}_{g,\rho}:= \Bigl\{ v \in H^1(\O_\rho,\mathbb{S}^1); v=g \text{ on } \p G; v=\left(\frac{x-a_i}{|x-a_i|}\right)^{d_i}, \ i=1,\dots,k\Bigr\}
\end{equation}
\begin{equation}\label{eq:minu_tilde_rho}
\tilde{W}_g^\rho(\{a_i\},\{d_i\}):= \inf_{v\in \tilde{\E}_{g,\rho}} \frac12 \int_{\O_\rho} |\nabla v|^2.
\end{equation}

\begin{lemma}\label{lem:finiteness_renormalized_Dirichlet}
The map \(\rho \mapsto W_g^\rho(\{a_i\},\{d_i\})-\pi \left( \sum_{i=1}^k d_i^2\right) |\log \rho|\) is non-increasing, whereas the map \( \rho \mapsto \tilde{W}_g^\rho(\{a_i\},\{d_i\})-\pi \left( \sum_{i=1}^k d_i^2\right) |\log \rho|\) is non-decreasing. Furthermore\\ \(W_g^\rho(\{a_i\},\{d_i\}) \leq\  \tilde{W}_g^\rho(\{a_i\},\{d_i\})\) and we have that 
\begin{equation}
W_g(\{a_i\},\{d_i\}):=\lim_{\rho \to 0} \left( W_g^\rho(\{a_i\},\{d_i\})-\pi \left( \sum_{i=1}^k d_i^2\right) |\log \rho| \right) \text{ exists and is finite}.
\end{equation}
\end{lemma}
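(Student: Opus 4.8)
The plan is to establish the two monotonicity statements by comparing energies on the annuli $B_{\rho'}(a_i)\setminus\bar B_\rho(a_i)$ that appear when one shrinks the holes from radius $\rho'$ to radius $\rho<\rho'$, and then to combine them with the ordering $W_g^\rho\le\tilde W_g^\rho$ so as to squeeze the relevant quantity between a monotone bounded pair, which forces the limit to exist and be finite.

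First I would prove that $\rho\mapsto W_g^\rho-\pi(\sum_i d_i^2)|\log\rho|$ is non-increasing. Fix $\rho<\rho'$ and let $u_\rho$ be the minimizer for $W_g^\rho$ provided by Proposition \ref{prop:existence_Euler_Lagrange}. Since $\O_{\rho'}\subset\O_\rho$, the restriction $u_\rho|_{\O_{\rho'}}$ lies in $\E_{g,\rho'}$: it has trace $g$ on $\p G$, and since $u_\rho$ is a smooth $\mathbb S^1$-valued map on the closed annulus $\bar B_{\rho'}(a_i)\setminus B_\rho(a_i)$, homotopy invariance of the degree gives $\deg(u_\rho,\p B_{\rho'}(a_i))=\deg(u_\rho,\p B_\rho(a_i))=d_i$. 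Hence $W_g^{\rho'}\le\frac12\int_{\O_{\rho'}}|\nabla u_\rho|^2=W_g^\rho-\frac12\int_{\cup_i(B_{\rho'}(a_i)\setminus\bar B_\rho(a_i))}|\nabla u_\rho|^2$. The crux is the lower bound for the annular energy: working in polar coordinates centred at $a_i$ and using $|\p_\theta u_\rho|=|u_\rho\wedge\p_\theta u_\rho|$ (valid since $u_\rho$ is $\mathbb S^1$-valued) together with $\int_0^{2\pi}u_\rho\wedge\p_\theta u_\rho\,d\theta=2\pi d_i$, Cauchy–Schwarz yields $\int_0^{2\pi}|\p_\theta u_\rho|^2\,d\theta\ge 2\pi d_i^2$, whence $\frac12\int_{B_{\rho'}(a_i)\setminus\bar B_\rho(a_i)}|\nabla u_\rho|^2\ge\pi d_i^2\log(\rho'/\rho)$. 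Summing over $i$ gives $W_g^\rho-W_g^{\rho'}\ge\pi(\sum_i d_i^2)\log(\rho'/\rho)=\pi(\sum_i d_i^2)(|\log\rho|-|\log\rho'|)$, which is exactly the asserted monotonicity.

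Next, for $\tilde W_g^\rho$ I would argue in the opposite direction, replacing the restriction by an explicit extension. Given $\e>0$, pick $\tilde u_{\rho'}\in\tilde\E_{g,\rho'}$ with energy at most $\tilde W_g^{\rho'}+\e$; it equals $(\frac{x-a_i}{|x-a_i|})^{d_i}$ on each $\p B_{\rho'}(a_i)$. I extend it to $\O_\rho$ by setting it equal to $(\frac{x-a_i}{|x-a_i|})^{d_i}$ on each annulus $B_{\rho'}(a_i)\setminus\bar B_\rho(a_i)$; the boundary values match on $\p B_{\rho'}(a_i)$, and the extended map lies in $\tilde\E_{g,\rho}$. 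A direct computation gives the energy of $(\frac{x-a_i}{|x-a_i|})^{d_i}$ on that annulus as exactly $\pi d_i^2\log(\rho'/\rho)$, so $\tilde W_g^\rho\le\tilde W_g^{\rho'}+\e+\pi(\sum_i d_i^2)\log(\rho'/\rho)$; letting $\e\to0$ shows $\rho\mapsto\tilde W_g^\rho-\pi(\sum_i d_i^2)|\log\rho|$ is non-decreasing. The inequality $W_g^\rho\le\tilde W_g^\rho$ is immediate, since $\tilde\E_{g,\rho}\subset\E_{g,\rho}$ (prescribing $v=(\frac{x-a_i}{|x-a_i|})^{d_i}$ on $\p B_\rho(a_i)$ forces $\deg(v,\p B_\rho(a_i))=d_i$).

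Finally I combine these. Set $f(\rho):=W_g^\rho-\pi(\sum_i d_i^2)|\log\rho|$ and $\tilde f(\rho):=\tilde W_g^\rho-\pi(\sum_i d_i^2)|\log\rho|$. The first step shows $f$ is non-increasing and the second that $\tilde f$ is non-decreasing in $\rho$, while $f\le\tilde f$ pointwise. Fixing any admissible $\rho_0$, for every $\rho\le\rho_0$ we obtain $f(\rho)\le\tilde f(\rho)\le\tilde f(\rho_0)<+\infty$. Thus $f$ is non-increasing in $\rho$ and bounded above, so $\lim_{\rho\to 0}f(\rho)$ exists and is finite, and this limit is precisely $W_g(\{a_i\},\{d_i\})$. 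I expect the only genuinely delicate point to be the annular energy lower bound of the first step, and in particular verifying that the degree is preserved under restriction so that the competitor is admissible; the remaining steps are explicit computations or direct comparisons of the admissible classes.
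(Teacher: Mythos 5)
Your proof is correct and follows essentially the same route as the paper: the annular lower bound via polar coordinates and Cauchy--Schwarz for the monotonicity of \(W_g^\rho\), the explicit extension by \(\left(\frac{x-a_i}{|x-a_i|}\right)^{d_i}\) for the monotonicity of \(\tilde{W}_g^\rho\), and the squeeze \(f\leq \tilde f\) to conclude. The only (harmless) differences are that you use an \(\e\)-almost-minimizer for \(\tilde{W}_g^{\rho'}\) rather than an exact one and that you spell out the degree-preservation under restriction, which the paper leaves implicit.
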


This lemma follows from Proposition 2.10 and Lemma 2.11 in \cite{Monteil_Rodiac_VanSchatfingen_2020a}. We reproduce the proof for the comfort of the reader.
\begin{proof}
Let \(0<\rho<\sigma\) with \(\sigma\) small enough so that the balls \( \overline{B}_\sigma(a_i)\) are disjoints and included in \(G\). We can write that
\begin{equation}\nonumber
\int_{G \setminus \cup_{i=1}^k \overline{B}_\rho(a_i)} |\nabla u_\rho|^2 = \int_{G \setminus \cup_{i=1}^k \overline{B}_\sigma(a_i)} |\nabla u_\rho|^2 +\int_{\cup_{i=1}^k \left(B_\sigma(a_i) \setminus \overline{B}_\rho(a_i)\right)} |\nabla u_\rho|^2.
\end{equation}
Now, by using polar coordinates centred at \(a_i\) and the Cauchy-Schwarz inequality, we have
\begin{align*}
\int_{B_\sigma(a_i)\setminus \overline{B}_\rho(a_i)} |\nabla u_\rho|^2 & \geq \int_\rho^\sigma \int_0^{2\pi} \frac{|\p_{\theta_i}u_\rho|^2}{r_i} \dif \theta_i  \dif r_i  \geq \frac{1}{2\pi} \int_\rho^{\sigma} \frac{1}{r_i} \left| \int_0^{2\pi} (u_\rho \wedge \p_{\theta_i} u_\rho) \dif \theta_i \right|^2 \dif r_i \\
& \geq 2\pi d_i^2 \log\frac{\sigma}{\rho}.
\end{align*}
Thus we find that
\begin{equation}\label{eq:intermediaire}
W_{g}^{\rho}-\pi \left(\sum_{i=1}^k d_i^2\right)\log \frac{1}{\rho} \geq W_{g}^{\sigma}-\pi\left( \sum_{i=1}^k d_i^2 \right) \log \frac{1}{\sigma}
\end{equation}
which proves the first assertion (here and in the rest of the proof, for simplicity, we do not write the dependence of the singularities \( \{a_i\},\{d_i\}\)). For the second assertion, if \( 0<\rho<\sigma\) and if \(\tilde{u}_\sigma\) is a minimizer for the problem \(\tilde{W}_{g}^{\sigma}\), then the map \[v(x)=\begin{cases} \tilde{u}_\sigma(x) & \text{ if } x\in \O_\sigma \\
\left(\frac{x-a_i}{|x-a_i|}\right)^{d_i} & \text{ if } x \in B_\sigma(a_i) \setminus \overline{B}_\rho(a_i) \end{cases}\] is a comparison map from the minimization problem \( \tilde{W}_{g}^{\rho}\). Thus
\begin{align*}
 \tilde{W}_{g}^{\rho}  &\leq \frac12 \int_{\O_\rho} |\nabla v|^2 = \tilde{W}_{g}^{\sigma}+\pi \left(\sum_{i=1}^k d_i^2\right) \log \frac{\sigma}{\rho}.
\end{align*}
This proves that \(\rho \mapsto \tilde{W}_{g}^{\rho}\) is non-decreasing. We can easily see that \( W_{g}^{\rho} \leq \tilde{W}_{g}^{\rho}\) for every \(\rho\). Hence both quantities admit a limit when \(\rho\) goes to zero and their limits are finite.
\end{proof}

\begin{proposition}\label{prop:conv_u_rho_1}
Let \(u_\rho\) be the solution to the minimization problem \eqref{eq:min_w_d_rho}. Then there exist \(u_0\in H^1_{\text{loc}}(\overline{G} \setminus \{a_1,\dots,a_k\}, \mathbb{S}^1)\), \(H_0 \in H^1_{\text{loc}}(\overline{G} \setminus \{a_1,\dots,a_k\})\) and a sequence \(\rho_p \rightarrow 0\) such that
\begin{equation}\nonumber
u_{\rho_p} \rightharpoonup u_0 \text{ in } H^1_{\text{loc}}(\overline{G} \setminus \{a_1,\dots,a_k\}), \quad H_\rho \rightharpoonup H_0  \text{ in } H^1_{\text{loc}}(\overline{G} \setminus \{a_1,\dots,a_k\}).
\end{equation}
\end{proposition}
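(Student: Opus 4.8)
The plan is to produce uniform (in $\rho$) bounds for $u_\rho$ and $H_\rho$ in $H^1(K)$ for every compact $K\subset\overline{G}\setminus\{a_1,\dots,a_k\}$, and then extract weakly convergent subsequences by a diagonal argument along an exhaustion of $\overline{G}\setminus\{a_1,\dots,a_k\}$ by such compacts. Since $|u_\rho|=1$ pointwise, $u_\rho$ is bounded in $L^\infty$ and hence in $L^2(K)$ automatically, so the whole issue reduces to controlling $\int_K|\nabla u_\rho|^2$ independently of $\rho$.

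First I would show that the logarithmically divergent part of the Dirichlet energy concentrates entirely near the vortices. Fix $r_0>0$ small enough that the balls $\overline{B}_{r_0}(a_i)$ are disjoint and contained in $G$, and set $K_0:=\overline{G}\setminus\bigcup_{i=1}^k B_{r_0}(a_i)$; any compact $K\subset\overline{G}\setminus\{a_i\}$ lies in such a $K_0$ once $r_0$ is small. By Lemma \ref{lem:finiteness_renormalized_Dirichlet} the quantity $W_g^\rho-\pi(\sum_i d_i^2)|\log\rho|$ is non-increasing in $\rho$ and tends to $W_g$, so it stays $\le W_g$; since $u_\rho$ is the minimizer this reads $\frac12\int_{\O_\rho}|\nabla u_\rho|^2=W_g^\rho\le\pi(\sum_i d_i^2)|\log\rho|+W_g$. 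On the other hand, $u_\rho$ has degree $d_i$ on every circle $\p B_r(a_i)$ with $\rho\le r\le r_0$, so the annulus lower bound used in the proof of Lemma \ref{lem:finiteness_renormalized_Dirichlet} gives $\frac12\int_{\O_\rho\setminus K_0}|\nabla u_\rho|^2\ge\pi(\sum_i d_i^2)\log(r_0/\rho)=\pi(\sum_i d_i^2)|\log\rho|+\pi(\sum_i d_i^2)\log r_0$. Subtracting, the $|\log\rho|$ terms cancel and $\frac12\int_{K_0}|\nabla u_\rho|^2\le W_g-\pi(\sum_i d_i^2)\log r_0=:C_{K_0}$, uniformly for small $\rho$. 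Thus $u_\rho$ is bounded in $H^1(K)$, and a diagonal extraction produces $\rho_p\to0$ with $u_{\rho_p}\rightharpoonup u_0$ in $H^1_{\text{loc}}(\overline{G}\setminus\{a_i\})$. By Rellich this convergence is strong in $L^2_{\text{loc}}$, hence a.e.\ along a further subsequence, so $|u_0|=1$ a.e.\ and $u_0\in H^1_{\text{loc}}(\overline{G}\setminus\{a_i\},\mathbb{S}^1)$.

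For $H_\rho$ I would exploit the Hodge decomposition \eqref{eq:Hodge_of_urho}. Since $|u_\rho|=1$ we have $|j(u_\rho)|=|\nabla u_\rho|$, and $\nabla H_\rho=j(u_\rho)-\nabla^\perp\Phi_\rho$ is then bounded in $L^2(K)$: the first term by the energy bound just obtained, the second because $\Phi_\rho\to\Phi_0$ in $\C^1(K)$ by Proposition \ref{prop:conv_phi_rho}. Because $H_\rho=0$ on $\Gamma_0$, which is a component of $\p K_0$, a Poincaré inequality with vanishing trace on $\Gamma_0$ upgrades the gradient bound to $\|H_\rho\|_{H^1(K_0)}\le C$. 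Extracting along the same sequence $\rho_p$ gives $H_{\rho_p}\rightharpoonup H_0$ in $H^1_{\text{loc}}(\overline{G}\setminus\{a_i\})$, completing the proof.

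The hard part is the uniform energy estimate of the second paragraph: everything rests on pairing the global upper bound coming from the monotonicity and finiteness Lemma \ref{lem:finiteness_renormalized_Dirichlet} with the sharp local lower bound forced by the prescribed degrees $d_i$, so that the two $|\log\rho|$ contributions cancel exactly. The Poincaré step for $H_\rho$ is comparatively routine, its only subtlety being that $H_\rho$ is a genuine function (normalized by $H_\rho=0$ on $\Gamma_0$) rather than a quantity defined merely modulo $2\pi$.
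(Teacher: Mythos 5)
Your proposal is correct and follows essentially the same route as the paper: the uniform bound \(\frac12\int_{G\setminus\cup_i \bar{B}_\sigma(a_i)}|\nabla u_\rho|^2\le W_g(\{a_i\},\{d_i\})-\pi\bigl(\sum_i d_i^2\bigr)\log\sigma\) obtained by playing the monotonicity statement of Lemma \ref{lem:finiteness_renormalized_Dirichlet} against the annulus lower bound, followed by a diagonal extraction, and then the bound \(\nabla H_\rho=j(u_\rho)-\nabla^\perp\Phi_\rho\) in \(L^2_{\text{loc}}\) combined with Proposition \ref{prop:conv_phi_rho} and the Poincar\'e inequality using \(H_\rho=0\) on \(\Gamma_0\). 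Your write-up is in fact slightly more detailed than the paper's (e.g.\ the Rellich argument showing \(|u_0|=1\) a.e.), but the ideas coincide.
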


\begin{proof}
The proof follows the idea of \cite[Proposition 8.1]{Monteil_Rodiac_VanSchatfingen_2020a}. By Lemma \ref{lem:finiteness_renormalized_Dirichlet}, for \(0<\rho<\sigma\) we have
\[ \int_{G \setminus \cup_{i=1}^k \bar{B}_\sigma(a_i)} \frac{|\nabla u_\rho|^2}{2}\leq W_g^\rho(\{a_i\},\{d_i\})-\pi\left(\sum_{i=1}^k d_i^2\right) \log \frac{\sigma}{\rho}.\]
By using Lemma \ref{lem:finiteness_renormalized_Dirichlet} again we arrive at
\begin{equation}\label{eq:boundedness_condition}
\int_{G \setminus \cup_{i=1}^k \bar{B}_\sigma(a_i)} \frac{|\nabla u_\rho|^2}{2} \leq W_g(\{a_i\},\{d_i\})-\pi \left( \sum_{i=1}^k d_i^2\right) \log \sigma.
\end{equation}
Thanks to the boundedness condition \eqref{eq:boundedness_condition} we can use a diagonal argument to find a subsequence \(\rho_p \to 0\) and a map \(u_0 \in H^1_{\text{loc}}(\overline{G} \setminus \{a_1,\dots,a_k\},\mathbb{S}^1)\) such that
\(u_{\rho_p} \rightharpoonup u_0\) in \(H^1_{\text{loc}}(\overline{G} \setminus \{a_1,\dots,a_k\})\).
Now since we know from Proposition \ref{prop:conv_phi_rho} that \( \Phi_\rho \rightarrow \Phi_0\) in \( \C^m_{\text{loc}}(\overline{G} \setminus \{a_1,\dots,a_k\}) \) we find that \(\nabla H_{\rho_p}=j(u_{\rho_p})-\nabla^\perp \Phi_{\rho_p}\) converges weakly in \(L^2_{\text{loc}} (\overline{G} \setminus \{ a_1,\dots,a_k\})\). From the Poincar\'e inequality, which is valid here since \(H_\rho=0\) on \( \Gamma_0\), we infer that there exists \(H_0\in H^1_{\text{loc}}(\overline{G} \setminus \{a_1,\dots,a_k\})\) such that \( H_{\rho_p} \rightharpoonup H_0\) in \( H^1_{\text{loc}}(\overline{G} \setminus \{a_1,\dots,a_k\})\).
\end{proof}

In particular, from the previous proposition and the  weak continuity of the trace operator, there exist a subsequence \(\rho_p \to 0\) and \(\alpha_l=\alpha_l(g,\{a_i\},\{d_i\})\) such that
\begin{equation}\label{eq:conv_alpha_l_rho}
\alpha_{l,\rho} \rightarrow \alpha_l, \quad \text{ for } l=1,\dots,n.
\end{equation}

\begin{lemma}\label{lem:energy_of_Phi_rho}
Let \(\Phi_\rho\) be a solution to \eqref{eq:phi_rho}. Then,  we have
\begin{multline}\nonumber
\lim_{\rho \to 0} \left( \frac12 \int_{\O_\rho} |\nabla \Phi_\rho|^2-\pi \left( \sum_{i=1}^kd_i^2\right)|\log\rho| \right)= -\pi \sum_{i\neq j} d_id_j \log |a_i-a_j| \\+\frac12 \int_{\p G} \Phi_0(g \wedge \p_\tau g) -\pi \sum_{i=1}^k d_iR_0(a_i) <+\infty.
\end{multline}
\end{lemma}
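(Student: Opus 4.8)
The plan is to compute the Dirichlet energy of $\Phi_\rho$ by integration by parts, isolate the (constant) values of $\Phi_\rho$ on the small circles $\p B_\rho(a_i)$, and then control these constants through the convergence $\Phi_\rho\to\Phi_0$ of Proposition~\ref{prop:conv_phi_rho} combined with a one–dimensional analysis of circular averages. First, since $\Phi_\rho$ is harmonic in $\O_\rho$, Green's identity gives
\begin{equation}\label{eq:plan_green}
\int_{\O_\rho}|\nabla\Phi_\rho|^2 = \int_{\p G}\Phi_\rho\,\p_\nu\Phi_\rho - \sum_{i=1}^k\int_{\p B_\rho(a_i)}\Phi_\rho\,\p_\nu\Phi_\rho,
\end{equation}
the sign reflecting that the outward normal to $\O_\rho$ on $\p B_\rho(a_i)$ points towards $a_i$. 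Denoting by $c_{i,\rho}$ the constant value of $\Phi_\rho$ on $\p B_\rho(a_i)$ and using $\p_\nu\Phi_\rho=g\wedge\p_\tau g$ on $\p G$ together with $\int_{\p B_\rho(a_i)}\p_\nu\Phi_\rho=2\pi d_i$, this reduces to
\begin{equation}\label{eq:plan_energy}
\frac12\int_{\O_\rho}|\nabla\Phi_\rho|^2 = \frac12\int_{\p G}\Phi_\rho\,(g\wedge\p_\tau g) - \pi\sum_{i=1}^k d_i\,c_{i,\rho}.
\end{equation}

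Since $\p G$ is a compact subset of $\overline{G}\setminus\{a_1,\dots,a_k\}$, Proposition~\ref{prop:conv_phi_rho} gives $\int_{\p G}\Phi_\rho(g\wedge\p_\tau g)\to\int_{\p G}\Phi_0(g\wedge\p_\tau g)$, so, recalling $-\pi(\sum_i d_i^2)|\log\rho|=\pi(\sum_i d_i^2)\log\rho$ for small $\rho$, the whole statement reduces to establishing
\begin{equation}\label{eq:plan_constants}
c_{i,\rho} - d_i\log\rho \longrightarrow \sum_{j\neq i}d_j\log|a_i-a_j| + R_0(a_i)\qquad\text{as }\rho\to0.
\end{equation}
To prove \eqref{eq:plan_constants} I would fix $\sigma>0$ independent of $\rho$, with the $\overline{B}_\sigma(a_i)$ disjoint and inside $G$, and study the circular average $\bar\Phi_\rho(r):=\frac{1}{2\pi}\int_0^{2\pi}\Phi_\rho(a_i+re^{i\t})\dif\t$ on the annulus $\rho\le r\le\sigma$. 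Because $\Phi_\rho$ is harmonic, $\bar\Phi_\rho(r)=A_{i,\rho}+B_{i,\rho}\log r$, and the flux identity $2\pi B_{i,\rho}=\int_{\p B_r(a_i)}\p_\nu\Phi_\rho=2\pi d_i$ (the flux being independent of $r$ by harmonicity) forces $B_{i,\rho}=d_i$. Evaluating at $r=\rho$, where $\Phi_\rho\equiv c_{i,\rho}$, gives $c_{i,\rho}=A_{i,\rho}+d_i\log\rho$, hence $c_{i,\rho}-d_i\log\rho=A_{i,\rho}=\bar\Phi_\rho(\sigma)-d_i\log\sigma$.

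Since $\p B_\sigma(a_i)$ stays away from the singular points, Proposition~\ref{prop:conv_phi_rho} yields $\bar\Phi_\rho(\sigma)\to\bar\Phi_0(\sigma)$, so it only remains to identify $\bar\Phi_0(\sigma)-d_i\log\sigma$. Writing $\Phi_0(x)=d_i\log|x-a_i|+\sum_{j\neq i}d_j\log|x-a_j|+R_0(x)$ and noting that both $\sum_{j\neq i}d_j\log|\cdot-a_j|$ and $R_0$ are harmonic in a neighbourhood of $a_i$ (indeed $\Delta R_0=0$ in $G$), the mean value property gives $\bar\Phi_0(\sigma)=d_i\log\sigma+\sum_{j\neq i}d_j\log|a_i-a_j|+R_0(a_i)$. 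Thus $\bar\Phi_0(\sigma)-d_i\log\sigma$ equals the right–hand side of \eqref{eq:plan_constants} and is, reassuringly, independent of $\sigma$. Inserting \eqref{eq:plan_constants} into \eqref{eq:plan_energy} and passing to the limit produces exactly the claimed value.

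The only genuine difficulty is the control of the constants $c_{i,\rho}$: the convergence $\Phi_\rho\to\Phi_0$ degenerates precisely at the points $a_i$ where $\Phi_0$ is logarithmically singular, so one cannot simply evaluate a limit there. The circular–average computation on a fixed intermediate annulus is what resolves this, transferring the good convergence available on $\p B_\sigma(a_i)$ down to the boundary values on the shrinking circles $\p B_\rho(a_i)$ while exactly accounting for the $d_i\log\rho$ divergence.
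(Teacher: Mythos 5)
Your proposal is correct, and its skeleton is the same as the paper's: Green's identity together with the boundary conditions in \eqref{eq:phi_rho} reduces everything to the term $\frac12\int_{\p G}\Phi_\rho(g\wedge\p_\tau g)$ and the constants $c_{i,\rho}=\Phi_\rho|_{\p B_\rho(a_i)}$, exactly as in the paper. Where you genuinely diverge is in the one delicate step, the identification of $\lim_{\rho\to 0}\bigl(c_{i,\rho}-d_i\log\rho\bigr)$. The paper handles it by writing $\Phi_\rho(\p B_\rho(a_i))=\Phi_0(x_i)+O(\rho)$ for some $x_i\in\p B_\rho(a_i)$ and then expanding $\Phi_0=\sum_j d_j\log|\cdot-a_j|+R_0$ at $x_i$; this replacement is justified by the bound $\|\Phi_\rho-\Phi_0\|_{L^\infty(\O_\rho)}\le C\rho$, which is valid up to the shrinking circles but is established inside the proof of Proposition \ref{prop:conv_phi_rho} (via Lemma \ref{lem:elliptic_estimate_1}) rather than in its statement, which only concerns compact subsets of $\overline{G}\setminus\{a_1,\dots,a_k\}$. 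You instead use only that stated compact-set convergence, together with the exact representation $\bar\Phi_\rho(r)=A_{i,\rho}+d_i\log r$ of circular averages on the annulus $\rho\le r\le\sigma$ — the slope being pinned by the $r$-independent flux $\int_{\p B_r(a_i)}\p_\nu\Phi_\rho=2\pi d_i$ — to transport the limit from the fixed circle $\p B_\sigma(a_i)$ down to $\p B_\rho(a_i)$, and the mean value property then evaluates $\bar\Phi_0(\sigma)=d_i\log\sigma+\sum_{j\neq i}d_j\log|a_i-a_j|+R_0(a_i)$ (legitimate, since $R_0$ and the $\log|\cdot-a_j|$, $j\neq i$, are harmonic near $a_i$). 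What each approach buys: yours is self-contained relative to the \emph{statement} of Proposition \ref{prop:conv_phi_rho} and isolates cleanly why the argument does not degenerate at the singularities; the paper's is shorter and comes with an explicit $O(\rho)$ error, though your version recovers the same rate if desired, since $\bar\Phi_\rho(\sigma)-\bar\Phi_0(\sigma)=O(\rho)$ by Proposition \ref{prop:conv_phi_rho}.
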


\begin{proof}
An integration by parts gives
\begin{equation*}
\int_{\O_\rho} |\nabla \Phi_\rho|^2=\int_{\Gamma_0}\p_\nu \Phi_\rho \Phi_\rho
-\sum_{l=1}^n \int_{\Gamma_l} \p_\nu \Phi_\rho \Phi_\rho  -\sum_{i=1}^k \int_{\p B_\rho(a_i)} \p_\nu \Phi_\rho \Phi_\rho .
\end{equation*}
Now we use \eqref{eq:phi_rho} and more particularly we use  \(\p_\nu \Phi_\rho=g \wedge \p_\tau g\) on \(\Gamma_l, l=0,1,\dots,n\), \(\Phi_\rho=cst.\) on \(\p B_\rho(a_i)\), \(i=1,\dots,k\) and \(\int_{\p B_\rho(a_i)} \p_\nu \Phi_\rho=2\pi d_i\) to obtain
\begin{align*}
\int_{\O_\rho} |\nabla \Phi_\rho|^2 &= \int_{\Gamma_0} (g\wedge \p_\tau g) \Phi_\rho -\sum_{l=1}^n \int_{\Gamma_l} (g\wedge \p_\tau g) \Phi_\rho - \sum_{i=1}^k 2\pi d_i \Phi_\rho(\p B_\rho(a_i)) \\
&= \int_{\Gamma_0} (g \wedge \p_\tau g) \Phi_0-\sum_{l=1}^n \int_{\Gamma_l} (g\wedge \p_\tau g)\Phi_0-\sum_{i=1}^k 2\pi d_i \Phi_0(x_i) +O(\rho)
\end{align*}
where \(x_i\) is a point in \(\p B_\rho(a_i)\). Since \(R_0(x)=\Phi_0(x)-\sum_{i=1}^k d_i \log |x-a_i|\) we can write
\begin{align*}
\int_{\O_\rho}|\nabla \Phi_\rho|^2&= \int_{\Gamma_0} (g\wedge \p_\tau g)\Phi_0-\sum_{l=1}^n \int_{\Gamma_l} (g\wedge \p_\tau g)\Phi_0 +\sum_{i=1}^k 2\pi d_i^2 |\log \rho|\\
& -\sum_{i\neq j} 2\pi d_i d_j \log |a_i-a_j|-\sum_{i=1}^k 2\pi d_i R_0(a_i)+O(\rho).
\end{align*}
This yields the result.
\end{proof}

\begin{proposition}\label{prop:conv_H_rho}
Let \(H_\rho\) be the solution to \eqref{eq:H_rho}, then up to a subsequence \(\rho_p \to 0\), we can find \( \alpha_l=\alpha_l(g,\{a_i\},\{d_i\})\) for \(l=1,\dots,n\) and \(H_0 \in \C^\infty(\overline{G}\setminus \{a_1,\dots,a_k\})\) such that
\begin{equation}\label{eq:conv_H_rho}
H_{\rho_p} \rightarrow H_0 \text{ in } \C^m_{\text{loc}}(\overline{G}\setminus \{a_1,\dots,a_k\}) \text{ for every } m \in \mathbb{N}
\end{equation}
with \(H_0\) satisfying
\begin{equation}\label{eq:H_0}
\left\{
\begin{array}{rcll}
\Delta H_0&=&0 & \text{ in } G, \\
H_0&=&0 & \text{ on } \Gamma_0, \\
H_0&=& \alpha_l & \text{ on }  \Gamma_l, \ l=1,\dots,n.
\end{array}
\right.
\end{equation}
\end{proposition}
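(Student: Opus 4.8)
The plan is to take the weak convergence $H_{\rho_p}\rightharpoonup H_0$ in $H^1_{\text{loc}}(\overline G\setminus\{a_1,\dots,a_k\})$ supplied by Proposition \ref{prop:conv_u_rho_1}, together with $\alpha_{l,\rho}\to\alpha_l$ from \eqref{eq:conv_alpha_l_rho}, and to accomplish two tasks: (i) upgrade this convergence to $\C^m_{\text{loc}}$ up to $\p G$, and (ii) show that the limit $H_0$ extends harmonically across each puncture $a_i$, so that it solves \eqref{eq:H_0}. The driving tool for (ii) will be a bound on the full Dirichlet energy $\int_{\O_\rho}|\nabla H_\rho|^2$ that is uniform in $\rho$, obtained by orthogonally splitting the minimal energy $W_g^\rho$.

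To get this energy bound, I would use that $u_\rho$ is $\mathbb{S}^1$-valued, so by Proposition \ref{prop:Phi_rho} one has $|\nabla u_\rho|^2=|j(u_\rho)|^2=|\nabla^\perp\Phi_\rho+\nabla H_\rho|^2$. Expanding the square, the decisive point is that the cross term vanishes,
\begin{equation*}
\int_{\O_\rho}\nabla^\perp\Phi_\rho\cdot\nabla H_\rho=\int_{\p\O_\rho}H_\rho\,(\nabla^\perp\Phi_\rho\cdot\nu)=0,
\end{equation*}
the first equality being integration by parts via $\dive\nabla^\perp\Phi_\rho=0$, and the second following from $\nabla^\perp\Phi_\rho\cdot\nu=-\p_\tau\Phi_\rho$: on $\Gamma_0$ we have $H_\rho=0$; on each $\Gamma_l$ the trace of $H_\rho$ is the constant $\alpha_{l,\rho}$ while $\int_{\Gamma_l}\p_\tau\Phi_\rho=0$ since $\Phi_\rho$ is single-valued; and on each $\p B_\rho(a_i)$ the function $\Phi_\rho$ is constant, so $\p_\tau\Phi_\rho=0$. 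Consequently $2W_g^\rho=\int_{\O_\rho}|\nabla\Phi_\rho|^2+\int_{\O_\rho}|\nabla H_\rho|^2$. Subtracting $2\pi(\sum_i d_i^2)|\log\rho|$ and invoking Lemma \ref{lem:finiteness_renormalized_Dirichlet} for $W_g^\rho$ and Lemma \ref{lem:energy_of_Phi_rho} for $\frac12\int_{\O_\rho}|\nabla\Phi_\rho|^2$, the two logarithmic divergences cancel, leaving $\int_{\O_\rho}|\nabla H_\rho|^2\le C$ uniformly as $\rho\to0$.

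Next I would upgrade the convergence. Since each $H_\rho$ is harmonic, on any compact $K\subset G\setminus\{a_1,\dots,a_k\}$ interior elliptic estimates bound $\|H_\rho\|_{\C^m(K)}$ by the local $L^2$ norm, which is controlled by Proposition \ref{prop:conv_u_rho_1}; near $\p G$ I would subtract the locally constant Dirichlet datum ($0$ on $\Gamma_0$, $\alpha_{l,\rho}$ on $\Gamma_l$) and apply boundary elliptic estimates, so that $\alpha_{l,\rho}\to\alpha_l$ yields uniform $\C^m$ bounds up to $\p G$ away from the punctures. Arzel\`a--Ascoli and uniqueness of the weak limit then promote the convergence to $H_{\rho_p}\to H_0$ in $\C^m_{\text{loc}}(\overline G\setminus\{a_1,\dots,a_k\})$ for every $m$; in particular $H_0$ is harmonic in $G\setminus\{a_1,\dots,a_k\}$, with $H_0=0$ on $\Gamma_0$ and $H_0=\alpha_l$ on $\Gamma_l$.

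The \emph{main obstacle} is to pass harmonicity across the points $a_i$. By the uniform energy bound and weak lower semicontinuity, $\int_{B_r(a_i)\setminus\{a_i\}}|\nabla H_0|^2\le\liminf_{\rho\to0}\int_{B_r(a_i)\setminus B_\rho(a_i)}|\nabla H_\rho|^2\le C$ for small fixed $r$, so $H_0\in H^1_{\text{loc}}(G)$; this is consistent with the Neumann condition $\p_\nu H_\rho=0$ in \eqref{eq:H_rho}, which prevents any flux and hence any logarithmic part from forming at $a_i$. I would then invoke removability of point singularities: a function in $H^1_{\text{loc}}(G)$ that is harmonic in $G$ minus finitely many points is harmonic throughout $G$, since in dimension two a point has zero $W^{1,2}$-capacity. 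Concretely, testing $\int\nabla H_0\cdot\nabla(\eta_\e\phi)=0$ against cutoffs $\eta_\e$ vanishing near the $a_i$ with $\int|\nabla\eta_\e|^2\to0$, the cutoff contribution is controlled by $\|\nabla H_0\|_{L^2}\|\nabla\eta_\e\|_{L^2}\to0$ and one is left with $\int_G\nabla H_0\cdot\nabla\phi=0$ for all $\phi\in\C_c^\infty(G)$. Hence $H_0$ is harmonic in $G$ and, together with its boundary values, solves \eqref{eq:H_0}, completing the proof.
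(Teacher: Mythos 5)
Your proof is correct and follows essentially the same route as the paper: the same splitting \(\int_{\O_\rho}|\nabla u_\rho|^2=\int_{\O_\rho}|\nabla\Phi_\rho|^2+\int_{\O_\rho}|\nabla H_\rho|^2+2\int_{\O_\rho}\nabla^\perp\Phi_\rho\cdot\nabla H_\rho\) combined with Lemma \ref{lem:finiteness_renormalized_Dirichlet} and Lemma \ref{lem:energy_of_Phi_rho} to get \(\sup_{\rho>0}\int_{\O_\rho}|\nabla H_\rho|^2<+\infty\), then elliptic estimates for the \(\C^m_{\text{loc}}(\overline{G}\setminus\{a_1,\dots,a_k\})\) convergence, and lower semicontinuity plus the zero-capacity cutoff argument to remove the singularities at the \(a_i\). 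The one small deviation is that you observe the cross term vanishes identically (using that \(H_\rho\) is locally constant on \(\p G\), that \(\Phi_\rho\) is single-valued so \(\int_{\Gamma_l}\p_\tau\Phi_\rho=0\), and that \(\Phi_\rho\) is constant on each \(\p B_\rho(a_i)\)), whereas the paper only uses its convergence to \(2\sum_{l=1}^n\alpha_l\int_{\Gamma_l}\p_\tau\Phi_0\); your observation is a correct refinement, equally sufficient here, which moreover shows that this flux term appearing in \eqref{eq:renormalized_Dirichlet} is in fact zero.
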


\begin{proof}
We already know from Proposition \ref{prop:conv_u_rho_1} that there exist \(\alpha_l\), \(l=1,\dots,n\) and \(H_0 \in H^1_{\text{loc}}(\overline{G}\setminus \{a_1,\dots,a_k\})\) such that, up to a subsequence not labelled, \(H_\rho \rightharpoonup H_0\) in \(H^1_{\text{loc}}(\overline{G}\setminus \{a_1,\dots,a_k\})\). It remains to show that \(H_0\) satisfies \eqref{eq:H_0}. First by elliptic estimates, cf.\ e.g.\ \cite[Theorem 5.21]{Giaquinta_Martinazzi_2012}, we have find that \eqref{eq:conv_H_rho} holds and \(\Delta H_0=0\) in \(G \setminus \{a_1,\dots,a_k\}\), \(H_0=0\) on \(\Gamma_0\) and \(H_0=\alpha_l\) on \(\Gamma_l, l=1,\dots,n\). But we can use \eqref{eq:Hodge_of_urho}, \( |\nabla u_\rho|^2=|j(u_\rho)|^2\) and an integration by parts to write
\begin{align*}
\int_{\O_\rho} |\nabla u_\rho|^2& = \int_{\O_\rho} |\nabla \Phi_\rho|^2+\int_{\O_\rho} |\nabla H_\rho|^2+ 2\int_{\O_\rho} \nabla^\perp \Phi_\rho \cdot \nabla H_\rho \\
&= \int_{\O_\rho} |\nabla \Phi_\rho|^2+\int_{\O_\rho} |\nabla H_\rho|^2+ 2\sum_{l=1}^n \int_{\Gamma_l} \alpha_{l,\rho} \p_\tau \Phi_\rho.
\end{align*}
By Proposition \ref{prop:conv_phi_rho} and since \(\alpha_{l,\rho} \rightarrow \alpha_l\) as \( \rho \rightarrow 0\)  we find that \( \int_{\Gamma_l} \alpha_{l,\rho} \p_\tau \Phi_\rho \rightarrow \int_{\Gamma_l} \alpha_l \p_\tau \Phi_0\) as \(\rho \rightarrow 0\). From Lemma \ref{lem:finiteness_renormalized_Dirichlet} and Lemma \ref{lem:energy_of_Phi_rho} we obtain that
\[ \sup_{\rho>0} \int_{\O_\rho} |\nabla H_\rho|^2 < +\infty.\]
By lower semi-continuity of the Dirichlet energy, for every \(\sigma>0\) we have
\begin{align*}
\int_{\O_\sigma} |\nabla H_0|^2 \leq \liminf_{\rho \to 0} \int_{\O_\sigma} |\nabla H_\rho|^2 \leq \limsup_{\rho \to 0} \int_{\O_\sigma} |\nabla H_\rho|^2.
\end{align*}
But if \(\rho<\sigma\) then \( \int_{\O_\sigma} |\nabla H_\rho|^2 \leq \int_{\O_\rho} |\nabla H_\rho|^2\) and hence we arrive at
\[ \sup_{\sigma>0} \int_{\O_\sigma} |\nabla H_0|^2<+\infty.\]
By monotone convergence, it implies that \(\nabla H_0 \in L^2(G)\), and by the Poincaré inequality we find that \(H_0 \in L^2(G)\). Then it can be show that the singularities \(a_1,\dots,a_k\) are removable\footnote{To prove this we can take a cut-off function \(\eta\) such that \(\eta\equiv 1\) in \(B_\e(a_i)\) and \(\eta\equiv 0\) in \(B_{2\e}(a_i)^c\), \(i=1,\dots,k\), then we write that \(\int_G \nabla H_0\nabla \psi =\int_G \nabla H_0 \nabla [\psi (1-\eta) +\psi \eta]\). By using that \( \|\nabla \eta\|_{L^\infty}\leq C/\e\), and that  \( \|\nabla H_0\|_{L^2(B_\e(a_i))}\to 0\) as \(\e \to 0\), we arrive at the result.} for \(H_0\) and thus \(\Delta H_0=0\) in \(G\).
\end{proof}

\begin{proposition}\label{prop:conv_u_rho_22}
Let \(u_\rho\) be the solution to the minimization problem \eqref{eq:min_w_d_rho}, then there exists a sequence \(\rho_p \to 0\) such that
\begin{equation}
u_{\rho_p} \rightarrow u_0 \text{ in } \C^m_{\text{loc}}(G\setminus \{a_1,\dots,a_k\}) \text{ for every } m\in \mathbb{N}
\end{equation}
with \(u_0\in \C^\infty(G \setminus \{a_1,\dots,a_k\},\mathbb{S}^1)\) satisfying
\begin{equation}
\left\{
\begin{array}{rcll}
-\Delta u_0 &=& |\nabla u_0|^2u_0 &\text{ in } G \setminus \{a_1,\dots,a_k\},\\
u_0&=&g & \text{ on } \p G.
\end{array}
\right.
\end{equation}
Furthermore we have that 
\begin{equation}\label{eq:Hodge_of_u0}
j(u_0)=\nabla^\perp \Phi_0+\nabla H_0
\end{equation}
and \(u_0=v_0e^{iH_0}\). In particular \(u_0\) satisfies \( \dive j(u_0)=0, \curl j(u_0)=2\pi \sum_{i=1}^k d_i \delta_{a_i}\) in \(G\) and \(j(u_0)\cdot \tau=g\wedge \p_\tau g
\) on \(\p G\).
\end{proposition}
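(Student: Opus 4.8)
The plan is to identify the limit explicitly through the decomposition \(u_\rho = v_\rho e^{iH_\rho}\) of Proposition \ref{prop:Phi_rho}, and to promote the weak convergence of Proposition \ref{prop:conv_u_rho_1} to \(\C^m_{\text{loc}}\) convergence by combining the already established \(\C^m_{\text{loc}}\) convergences of \(\Phi_\rho\) and \(H_\rho\).

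The first step is to show that \(v_\rho \to v_0\) in \(\C^m_{\text{loc}}(\overline{G} \setminus \{a_1,\dots,a_k\})\), where \(v_0\) is the map of Lemma \ref{lem:existence_v_0}. To this end I would consider \(w_\rho := v_\rho \overline{v_0}\), which is \(\mathbb{S}^1\)-valued and, by the computation in the proof of Lemma \ref{lem:lifting}, has current \(j(w_\rho) = j(v_\rho) - j(v_0) = \nabla^\perp(\Phi_\rho - \Phi_0)\). Since \(|\nabla w_\rho| = |j(w_\rho)|\) and \(\Phi_\rho \to \Phi_0\) in \(\C^m_{\text{loc}}\) with rate \(\rho\) by Proposition \ref{prop:conv_phi_rho}, the gradient of \(w_\rho\) tends to \(0\); as \(w_\rho = 1\) on the connected component \(\Gamma_0\) (both \(v_\rho\) and \(v_0\) equal \(g\) there), this forces \(w_\rho \to 1\), hence \(v_\rho \to v_0\), in \(\C^m_{\text{loc}}\).

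Combining this with \(H_{\rho_p} \to H_0\) in \(\C^m_{\text{loc}}\) from Proposition \ref{prop:conv_H_rho} and the product formula \(u_\rho = v_\rho e^{iH_\rho}\), I obtain \(u_{\rho_p} \to v_0 e^{iH_0}\) in \(\C^m_{\text{loc}}(G \setminus \{a_1,\dots,a_k\})\) for every \(m\). Setting \(u_0 := v_0 e^{iH_0}\) gives a smooth \(\mathbb{S}^1\)-valued map, which by uniqueness of limits coincides with the weak \(H^1_{\text{loc}}\)-limit of Proposition \ref{prop:conv_u_rho_1}; moreover the computation of Proposition \ref{prop:Phi_rho} yields \(j(u_0) = j(v_0) + \nabla H_0 = \nabla^\perp \Phi_0 + \nabla H_0\). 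For the boundary condition, on \(\Gamma_0\) one has \(v_0 = g\) and \(H_0 = 0\), so \(u_0 = g\); on each \(\Gamma_l\) one has \(v_0 = e^{-i\theta_l}g\) and \(H_0 = \alpha_l\), and since \(\alpha_{l,\rho} - \theta_{l,\rho} \in 2\pi\mathbb{Z}\) for all \(\rho\) while both quantities converge, the limits satisfy \(\alpha_l - \theta_l \in 2\pi\mathbb{Z}\), whence \(u_0 = e^{-i\theta_l}g\, e^{i\alpha_l} = g\) on \(\Gamma_l\) as well. Finally the PDE is read off from the Hodge decomposition: since \(H_0\) is harmonic in \(G\), one has \(\dive j(u_0) = \Delta H_0 = 0\) in \(G \setminus \{a_1,\dots,a_k\}\), which for an \(\mathbb{S}^1\)-valued map is equivalent to \(-\Delta u_0 = |\nabla u_0|^2 u_0\); likewise \(\curl j(u_0) = \Delta \Phi_0 = 2\pi \sum_{i=1}^k d_i \delta_{a_i}\) by \eqref{eq:Phi_0}, and \(j(u_0)\cdot\tau = g \wedge \p_\tau g\) on \(\p G\) because \(u_0 = g\) there.

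I expect the only delicate point to be the upgrade from weak \(H^1\) to \(\C^m_{\text{loc}}\) convergence, which the decomposition \(u_\rho = v_\rho e^{iH_\rho}\) reduces to the convergence \(v_\rho \to v_0\); the remaining subtlety is the congruence \(\alpha_l \equiv \theta_l \pmod{2\pi}\), obtained by passing to the limit in the relation of Proposition \ref{prop:Phi_rho}, which is exactly what is needed to recover the prescribed trace \(g\) on the inner boundaries.
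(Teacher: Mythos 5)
Your proposal is correct and takes essentially the same approach as the paper: the paper's own proof of this proposition is just the remark that it follows from the \(\C^m_{\text{loc}}\) convergences of \(\Phi_\rho\) (Proposition \ref{prop:conv_phi_rho}) and \(H_\rho\) (Proposition \ref{prop:conv_H_rho}) via the decomposition \(u_\rho=v_\rho e^{iH_\rho}\) of Proposition \ref{prop:Phi_rho}, and you have simply supplied the omitted details (the \(w_\rho=v_\rho\overline{v}_0\) argument and the identification of the limit). The one harmless deviation is that you obtain the congruence \(\alpha_l\equiv\theta_l \pmod{2\pi}\) already at this stage, by passing to the limit in \(\alpha_{l,\rho}\equiv\theta_{l,\rho}\), and deduce the trace \(u_0=g\) on the inner boundaries from it, whereas the paper reads off the trace from the weak \(H^1_{\text{loc}}\) convergence of Proposition \ref{prop:conv_u_rho_1} and proves the congruence only later, in the proof of Theorem \ref{th:main1}.
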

\begin{proof}
This result follows from the convergence of \(\Phi_\rho\) in Proposition \ref{prop:conv_phi_rho} and \(H_\rho\) in Proposition~\ref{prop:conv_H_rho}.
\end{proof}

We are now in position to obtain Theorem \ref{th:main1}

\begin{proof}(proof of Theorem \ref{th:main1})
 From Lemma \ref{lem:finiteness_renormalized_Dirichlet} we know that the limit of \(W_g^\rho(\{a_i\},\{d_i\})-\pi \left( \sum_{i=1}^k d_i^2\right) |\log \rho|\) as \(\rho \to 0\) exists and is finite. To compute this limit we can use a special subsequence \(\rho_p\to 0\) such that Proposition \ref{prop:conv_u_rho_22} and Proposition \ref{prop:conv_H_rho} hold. For simplicity of notation, in the rest of the proof we let \(\rho=\rho_p\). Let \(u_\rho\) be the solution to the minimization problem \eqref{eq:min_w_d_rho}. We use that \(|\nabla u_\rho|^2=|j(u_\rho)|^2\) along with \eqref{eq:Hodge_of_urho}  and an integration by parts to obtain
\begin{align*}
\int_{\O_\rho} |\nabla u_\rho|^2&= \int_{\O_\rho} |\nabla \Phi_\rho|^2+\int_{\O_\rho}|\nabla H_\rho|^2+2\int_{\O_\rho} \nabla^\perp \Phi_\rho \cdot \nabla H_\rho \\
&= \int_{\O_\rho} |\nabla \Phi_\rho|^2-\sum_{l=1}^n \int_{\Gamma_l} \alpha_{l,\rho} \p_\nu H_\rho +2 \sum_{l=1}^n \int_{\Gamma_l} \alpha_{l,\rho} \p_\tau \Phi_\rho.
\end{align*}
We have used the boundary condition for \(H_\rho \) in \eqref{eq:H_rho}. Now we use Lemma \ref{lem:energy_of_Phi_rho} and Proposition~\ref{prop:conv_H_rho} to obtain 
\begin{multline*}
\int_{\O_\rho} |\nabla u_\rho|^2=\int_{\Gamma_0}\Phi_0(g\wedge \p_\tau g)-\sum_{l=1}^n \int_{\Gamma_l} \Phi_0(g \wedge \p_\tau g)+2\pi\sum_{i=1}^k d_i^2|\log \rho| \\
-2\pi\sum_{i\neq j} d_i d_j \log |a_i-a_j|-2\pi\sum_{i=1}^kd_iR_0(a_i)-\sum_{l=1}^n\int_{\Gamma_l} \alpha_l \p_\nu H_0 \\+2\sum_{l=1}^n \int_{\Gamma_l} \alpha_{l} \p_\tau \Phi_0 +o_\rho(1).
\end{multline*}
We can integrate by parts once more and find that 
\begin{equation}\label{eq:energy_H_0}
\sum_{l=1}^n\int_{\Gamma_l} \alpha_l \p_\nu H_0=\int_G |\nabla H_0|^2.
\end{equation}
We decompose \(H_0=\sum_{l=1}^n \alpha_l \varphi_l(x)\)  where the functions \(\varphi_l\) are defined by \eqref{eq:def_varphi_l} to find \eqref{eq:renormalized_Dirichlet}.  We also observe that
\begin{equation}
\int_G|\nabla H_0|^2=\sum_{l=1}^n \int_{\Gamma_l} \alpha_l \p_\nu H_0 = \sum_{l=1}^n \sum_{m=1}^n \alpha_l \alpha_m \int_{\Gamma_l} \p_\nu \varphi_m.
\end{equation}
 Now we describe the coefficients \(\alpha_l=\alpha_l(g,\{a_i\},\{d_i\})\). To see that \(\alpha_l=\theta_l+2\pi\mathbb{Z}\) we recall from Lemma \ref{lem:existence_v_0} that \(v_0=e^{-i\theta_l}g\) on \(\Gamma_l\), \(l=1,\dots,n\). Then, we observe that \(j(u_0\overline{v}_0)=\nabla H_0\) and thus we can conclude that \(u_0=v_0e^{iH_0}e^{i\eta}\) for some constant \(\eta\in \R\). Since \(u_0=v_0\) on \(\Gamma_0\), and \(H_0=0\) on \(\Gamma_0\) we obtain that \(\eta \in 2\pi\mathbb{Z}\). Besides, on each \(\Gamma_l\), \(l=1,\dots,n\) we obtain that \(g=ge^{-i\theta_l+i\alpha_l+i\eta}\) which implies \(\alpha_l=\theta_l+2\pi\mathbb{Z}\).  Next we take the inner product of \eqref{eq:Hodge_of_u0} with \( \nabla \varphi_l\)  to find
\begin{equation}
\int_G j(u_0)\cdot \nabla \varphi_l =\sum_{m=1}^n \alpha_m \int_{\Gamma_m} \p_\nu \varphi_l \quad \text{ for } l=1,\dots,n.
\end{equation}
By integrating by parts the left-hand side we arrive at \eqref{eq:alpha_l_sol_linear_system}.
It remains to show that \(u_0\) is given by \eqref{eq:canocnical_harmonic_map}. First, by using the same arguments as in Proposition \ref{prop:existence_Euler_Lagrange} we can see that there exists a minimizer \(U_{g,d_i}\) of the Dirichlet energy in \(\I_{g,d_i}\) where this class is defined in \eqref{eq:class_Dirichlet}. To prove the uniqueness up to a multiplication by a constant. We write the Euler-Lagrange equations for \(U_{g,d_i}\) and we use Lemma \ref{lem:generalized-Poincare} to prove that \(j(U_{g,d_i})=\nabla^\perp \Phi_U\) for some function \(\Phi_U\). We use again the Euler-Lagrange equations on \(U_{g,d_i}\) to obtain that \(\Phi_U\) satisfies
\begin{equation}
\left\{
\begin{array}{rcll}
\Delta \Phi_U &=&0 & \text{ in } G, \\
\Phi_U&=&\text{cst.} & \text{ on } \Gamma_l, \ l=0,1,\dots,n,\\
 \int_{\Gamma_l}\p_\nu \Phi_U&=& \deg (g,\Gamma_l) & \ l=1,\dots,n, \\
 \int_{\Gamma_0} \p_\nu \Phi_U& =&\deg(g,\Gamma_0)-\sum_{i=1}^k d_i.
\end{array}
\right.
\end{equation}
As in Proposition \ref{prop:Phi_rho} we obtain that \(\Phi_U\) is uniquely determined up to a constant, since it is a minimizer 
of \( F(\varphi)= \frac12 \int_G |\nabla \varphi|^2+2\pi \sum_{l=1}^n \varphi \deg(g,\Gamma_l)+2\pi \varphi\left( \deg(g,
\Gamma_0)-\sum_{i=1}^kd_i\right)\) in the space \[ \{ \varphi \in H^1(G,\R); \varphi=cst. \text{ on } \Gamma_l, \ l=0,1,\dots,n 
\}. \] This minimizer is unique up to a constant by a convexity argument. We then use Lemma~\ref{lem:lifting}.  Now we call \(V:= u_0\overline{U}_{g,d_i} \prod_{i=1}^k\left(\frac{\overline{x-a_i}}{|x-a_i|} \right)^{d_i}\) and we compute that 
\begin{equation}
j\left(V \right)=j(u_0)-\sum_{i=1}^k d_i\left( \frac{(x-a_i)^\perp}{|x-a_i|^2} \right)-\nabla^\perp\Phi_U.
\end{equation}
Thus we can check that \( \curl j\left(V\right)=0\) in \(G\), \(\int_{\Gamma_l} V\cdot \tau=0\) for \(l=0,1,\dots,n\). By applying Lemma~\ref{lem:generalized-Poincare} we find \(\psi_g \in \C^1(G)\) such that \(j(V)=\nabla \psi_g\). Therefore, by using Lemma \ref{lem:lifting} we can write \(V=e^{i\psi_g}\) which yields \eqref{eq:canocnical_harmonic_map}. We can check that \( \dive j(V)=0\) in \(G\)  and hence we find that \(\Delta \psi_g=0\) in \(G\).
\end{proof}

\section{Renormalized energies with Neumann boundary conditions}\label{sec:Neumann}

In this section we fix \(k \in \mathbb{N}\), \( d_i=1,\dots,k,\) and we consider \(\E_{\N,\rho}\) given by \eqref{eq:class_min_Neumann} and \(W_\N^\rho\) given by \eqref{eq:min_w_N_rho}.

\begin{proposition}\label{prop:existence_Neumann}
The infimum \(W_\N^\rho\) in \eqref{eq:min_w_N_rho} is attained. Let \(\hat{u}_\rho\) be a minimizer  for \eqref{eq:min_w_N_rho} then \(\hat{u}_\rho \) satisfies
\begin{equation}
\left\{
\begin{array}{rcll}
-\Delta \hat{u}_\rho &=& |\nabla \hat{u}_\rho|^2 \hat{u}_\rho & \text{ in } \O_\rho, \\
\hat{u}_\rho \wedge \p_\nu \hat{u}_\rho &=&0 & \text{ on } \p \O_\rho, \\
|\hat{u}_\rho|&=&1 & \text{ on } \O_\rho.
\end{array}
\right.
\end{equation}
We also have that \(\hat{u}_\rho \in \C^\infty(\overline{\O}_\rho,\mathbb{S}^1)\), \(\int_{\p B_\rho(a_i)}\hat{u}_\rho \wedge \p_\tau \hat{u}_\rho=2\pi d_i,\) for \(i=1,\dots,k\), \(\int_{\Gamma_l} \hat{u}_\rho \wedge \p_\tau \hat{u}_\rho=: 2\pi \tilde{d}_l \in 2\pi \mathbb{Z}\), for \(l=0,1,\dots,n\) and \(\sum_{i=1}^k d_i+\sum_{l=1}^n \tilde{d}_l=\tilde{d}_0\).
\end{proposition}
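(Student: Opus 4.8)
The plan is to transcribe the proof of Proposition \ref{prop:existence_Euler_Lagrange} to the Neumann setting, the only structural change being that there is now no prescribed trace on \(\p G\). First I would establish existence by the direct method: take a minimizing sequence \((u_n)_n\subset \E_{\N,\rho}\); since it is bounded in \(H^1(\O_\rho)\), extract a subsequence converging weakly in \(H^1\), strongly in \(L^2\), and almost everywhere to some \(\hat{u}_\rho\). The a.e.\ convergence gives \(|\hat{u}_\rho|=1\) a.e., so \(\hat{u}_\rho\in H^1(\O_\rho,\mathbb{S}^1)\). The only constraints defining \(\E_{\N,\rho}\) are the degrees around the \(\p B_\rho(a_i)\), and by Lemma \ref{lem:approxiamte_degree} the degree is continuous under this mode of convergence, hence \(\deg(\hat{u}_\rho,\p B_\rho(a_i))=d_i\). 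Lower semicontinuity of the Dirichlet energy then shows that \(\hat{u}_\rho\) is a minimizer. In contrast with the Dirichlet case, no boundary trace must be passed to the limit here.

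Next I would derive the Euler--Lagrange system using two families of admissible variations. For the natural boundary condition, use phase variations \(\hat{u}_\rho e^{it\psi}\) with \(\psi\in\C^\infty(\overline{\O}_\rho,\R)\) \emph{arbitrary} (not required to vanish on any part of \(\p\O_\rho\)): these remain \(\mathbb{S}^1\)-valued and, since \(\deg(e^{it\psi},\p B_\rho(a_i))=0\) for single-valued smooth \(\psi\), they preserve the class \(\E_{\N,\rho}\). Differentiating \(\frac12\int_{\O_\rho}|\nabla(\hat{u}_\rho e^{it\psi})|^2\) at \(t=0\) gives \(\int_{\O_\rho} j(\hat{u}_\rho)\cdot\nabla\psi=0\) for all such \(\psi\), which is the weak form of \(\dive j(\hat{u}_\rho)=0\) in \(\O_\rho\) together with \(j(\hat{u}_\rho)\cdot\nu=\hat{u}_\rho\wedge\p_\nu \hat{u}_\rho=0\) on all of \(\p\O_\rho\); this is precisely where the Neumann condition on \(\p G\) is produced. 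For the bulk equation, use normalized perturbations \(\frac{\hat{u}_\rho+t\varphi}{|\hat{u}_\rho+t\varphi|}\) with \(\varphi\in\C_c^\infty(\O_\rho,\R^2)\), which yields \(-\Delta \hat{u}_\rho=|\nabla \hat{u}_\rho|^2\hat{u}_\rho\), as for any \(\mathbb{S}^1\)-valued harmonic map.

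For regularity I would invoke the interior smoothness of minimizing harmonic maps into \(\mathbb{S}^1\) \cite{Morrey_1948,Helein_1991}, exactly as in Proposition \ref{prop:existence_Euler_Lagrange}; smoothness up to \(\p G\) and each \(\p B_\rho(a_i)\) follows because \(\hat{u}_\rho\wedge\p_\nu \hat{u}_\rho=0\) is the natural condition, treatable by reflection as in \cite[Lemma 4.4]{Berlyand_Mironescu_unpublished}. Once \(\hat{u}_\rho\in\C^\infty(\overline{\O}_\rho,\mathbb{S}^1)\), the flux identities are immediate: \(\int_{\p B_\rho(a_i)}\hat{u}_\rho\wedge\p_\tau \hat{u}_\rho=2\pi\deg(\hat{u}_\rho,\p B_\rho(a_i))=2\pi d_i\), and on each \(\Gamma_l\) the smooth \(\mathbb{S}^1\)-valued trace has integer degree \(\tilde{d}_l:=\frac{1}{2\pi}\int_{\Gamma_l}\hat{u}_\rho\wedge\p_\tau \hat{u}_\rho\). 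The relation \(\sum_{i=1}^k d_i+\sum_{l=1}^n\tilde{d}_l=\tilde{d}_0\) is then obtained exactly as in the proof of Lemma \ref{lem:approxiamte_degree}: since \(|\hat{u}_\rho|=1\) forces \(\p_x\hat{u}_\rho\wedge\p_y\hat{u}_\rho=0\) a.e., integrating this identity over \(\O_\rho\) and applying Stokes' theorem gives \(0=\deg(\hat{u}_\rho,\Gamma_0)-\sum_{i=1}^k d_i-\sum_{l=1}^n\tilde{d}_l\).

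The hard part will not be any single computation but rather the careful treatment of the free boundary on \(\p G\): one must check that the phase variations \(\hat{u}_\rho e^{it\psi}\) with \(\psi\) nonvanishing on \(\p G\) are genuinely admissible — which they are, as they preserve both \(|u|=1\) and every degree — so that the natural condition \(\hat{u}_\rho\wedge\p_\nu \hat{u}_\rho=0\) is generated on the \emph{whole} of \(\p\O_\rho\) and not merely on the \(\p B_\rho(a_i)\). Everything else is a direct adaptation of the Dirichlet argument.
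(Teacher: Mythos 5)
Your proof is correct and takes essentially the same route as the paper, whose own proof of this proposition is a one-line remark that it ``follows the same lines as in Proposition \ref{prop:existence_Euler_Lagrange}''. Your transcription supplies precisely the details that adaptation requires: Lemma \ref{lem:approxiamte_degree} for weak continuity of the degrees (the only constraints in \(\E_{\N,\rho}\)), phase variations \(\hat{u}_\rho e^{it\psi}\) with \(\psi\) \emph{arbitrary} on \(\overline{\O}_\rho\) to generate the natural condition \(\hat{u}_\rho\wedge\p_\nu\hat{u}_\rho=0\) on all of \(\p\O_\rho\), and the identity \(\p_x\hat{u}_\rho\wedge\p_y\hat{u}_\rho=0\) a.e.\ integrated over \(\O_\rho\) for the degree relation, exactly as in the paper's proof of Lemma \ref{lem:approxiamte_degree}.
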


\begin{proof}
The proof follows the same lines as in Proposition \ref{prop:existence_Euler_Lagrange}.
\end{proof}
As in the previous section, we introduce the current associated with \(\hat{u}_\rho\) defined by \(j(\hat{u}_\rho):= \hat{u}_\rho \wedge \nabla \hat{u}_\rho\).

\begin{lemma}\label{prop:properties_current_Neumann}
Let \(\hat{u}_\rho\) be a minimizer for \eqref{eq:min_w_N_rho} then the current \(j(\hat{u}_\rho)\) satisfies
\begin{equation}
\left\{
\begin{array}{rcll}
\dive j(\hat{u}_\rho) =0 & \text{ and }& \curl j(\hat{u}_\rho) = 0 &  \text{ in } \O_\rho,\\
j(\hat{u}_\rho)\cdot \nu =0 & & &\text{ on } \p \O_\rho.
\end{array}
\right.
\end{equation}
Furthermore we have \(\int_{\p B_\rho(a_i)} j(\hat{u}_\rho) \cdot \tau =2\pi d_i\) for \(i=1,\dots,k\) and \( \int_{\Gamma_l} j(\hat{u}_\rho) \cdot \tau =2\pi \tilde{d}_l \in 2\pi \mathbb{Z}\), for \(l=0,1,\dots,n\).
\end{lemma}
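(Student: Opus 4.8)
The plan is to follow exactly the computation of Lemma~\ref{lem:equation_current}, adapting only the boundary conditions to the Neumann setting. First I would establish the interior equations. For the divergence, I would compute $\dive j(\hat{u}_\rho) = \p_x(\hat{u}_\rho \wedge \p_x \hat{u}_\rho) + \p_y(\hat{u}_\rho \wedge \p_y \hat{u}_\rho) = \hat{u}_\rho \wedge \Delta \hat{u}_\rho$; invoking the Euler--Lagrange equation $-\Delta \hat{u}_\rho = |\nabla \hat{u}_\rho|^2 \hat{u}_\rho$ from Proposition~\ref{prop:existence_Neumann} together with the elementary identity $a \wedge a = 0$, this vanishes. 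For the curl, I would compute $\p_x(\hat{u}_\rho \wedge \p_y \hat{u}_\rho) - \p_y(\hat{u}_\rho \wedge \p_x \hat{u}_\rho) = 2\, \p_x \hat{u}_\rho \wedge \p_y \hat{u}_\rho$; since $|\hat{u}_\rho| = 1$ forces $\hat{u}_\rho \cdot \p_x \hat{u}_\rho = \hat{u}_\rho \cdot \p_y \hat{u}_\rho = 0$, both partial derivatives are pointwise parallel to $i\hat{u}_\rho$, whence their wedge product is zero.

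For the boundary behaviour, I would use the pointwise identity $j(\hat{u}_\rho) \cdot \nu = \hat{u}_\rho \wedge \p_\nu \hat{u}_\rho$, which follows from $j(\hat{u}_\rho) \cdot \nu = (\hat{u}_\rho \wedge \p_x \hat{u}_\rho)\nu_1 + (\hat{u}_\rho \wedge \p_y \hat{u}_\rho)\nu_2 = \hat{u}_\rho \wedge \p_\nu \hat{u}_\rho$. The natural Neumann condition $\hat{u}_\rho \wedge \p_\nu \hat{u}_\rho = 0$ on all of $\p \O_\rho$, again from Proposition~\ref{prop:existence_Neumann}, then yields $j(\hat{u}_\rho) \cdot \nu = 0$ on $\p \O_\rho$. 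This is the only genuine departure from the Dirichlet Lemma~\ref{lem:equation_current}, where the prescribed trace produces a tangential condition $j \cdot \tau = g \wedge \p_\tau g$ in place of the present normal one.

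Finally, the circulation integrals follow directly from the definitions. By the analogous identity $j(\hat{u}_\rho) \cdot \tau = \hat{u}_\rho \wedge \p_\tau \hat{u}_\rho$ and the definition of the topological degree in \eqref{eq:degree}, I would write $\int_{\p B_\rho(a_i)} j(\hat{u}_\rho) \cdot \tau = 2\pi \deg(\hat{u}_\rho, \p B_\rho(a_i)) = 2\pi d_i$, using the degree constraint built into the class $\E_{\N,\rho}$ of \eqref{eq:class_min_Neumann}. For the components of $\p G$, the quantities $\int_{\Gamma_l} j(\hat{u}_\rho) \cdot \tau = 2\pi \tilde{d}_l$ are integer multiples of $2\pi$ precisely because $\hat{u}_\rho$ is a smooth $\mathbb{S}^1$-valued map on $\overline{\O}_\rho$, so that each degree $\tilde{d}_l$ is well defined; these are exactly the integers introduced in Proposition~\ref{prop:existence_Neumann}.

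Since every step is a direct pointwise computation supported by results already established, I anticipate no substantive obstacle: the proof is essentially a transcription of Lemma~\ref{lem:equation_current}, with the tangential boundary condition replaced by the normal one dictated by the Neumann problem.
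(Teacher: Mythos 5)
Your proposal is correct and takes exactly the paper's route: the paper's proof consists of the single remark that the argument is the same as Lemma \ref{lem:equation_current}, i.e., the pointwise identities \(\dive j(\hat{u}_\rho)=\hat{u}_\rho\wedge \Delta \hat{u}_\rho=0\) and \(\curl j(\hat{u}_\rho)=2\,\p_x\hat{u}_\rho\wedge\p_y\hat{u}_\rho=0\), with the boundary condition \(j(\hat{u}_\rho)\cdot\nu=\hat{u}_\rho\wedge\p_\nu\hat{u}_\rho=0\) read off from the Euler--Lagrange system of Proposition \ref{prop:existence_Neumann}. Your derivation of the circulation integrals from the degree constraint in \(\E_{\N,\rho}\) and the integrality of the degrees \(\tilde{d}_l\) of the smooth \(\mathbb{S}^1\)-valued map \(\hat{u}_\rho\) is likewise precisely the content already recorded in Proposition \ref{prop:existence_Neumann}, so nothing is missing.
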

The proof is similar to the one of Lemma \ref{lem:equation_current}.
Thanks to the previous lemma we can apply the generalized Poincar\'e lemma to obtain:
\begin{proposition}\label{prop:phi_rho_Neumann}
There exists a unique \(\hat{\Phi}_\rho \in \C^\infty(\overline{\O}_\rho,\R)\) such that
\[ j(\hat{u}_\rho)=\hat{u}_\rho \wedge \nabla \hat{u}_\rho = \nabla^\perp \hat{\Phi}_\rho,\]
and
\begin{equation}
\left\{
\begin{array}{rcll}
\Delta \hat{\Phi}_\rho&=&0 & \text{ in } \O_\rho,\\
\hat{\Phi}_\rho&=&0 & \text{ on } \Gamma_0, \\
\hat{\Phi}_\rho&=& \tilde{\beta}_{i,\rho} & \text{ on } \p B_\rho(a_i), \ i=1,\dots,k,\\
\hat{\Phi}_\rho&=& \beta_{i,\rho} & \text{ on } \Gamma_l, \ l=1,\dots,n,
\end{array}
\right.
\end{equation}
with \(\beta_{i,\rho}, \tilde{\beta}_{i,\rho}\) being real constants. Furthermore we have
\begin{align*}
 \int_{\p B_\rho(a_i)}\p_\nu \hat{\Phi}_\rho&=2\pi d_i, \text{ for } i=1,\dots,k, \ \int_{\Gamma_l} \p_\nu \hat{\Phi}_\rho = 2\pi\tilde{d}_l \in 2\pi \mathbb{Z}, \text{ for } l=0,1,\dots,n \\
   \sum_{i=1}^k d_i +\sum_{l=1}^n \tilde{d}_l&=\tilde{d}_0.
\end{align*}
\end{proposition}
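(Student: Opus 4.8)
The plan is to read off $\hat{\Phi}_\rho$ from the Hodge-type decomposition of the current, exactly as in the Dirichlet case but now using that the whole boundary condition is of Neumann type. By Lemma~\ref{prop:properties_current_Neumann} the field $D:=j(\hat{u}_\rho)$ satisfies $\dive D=0$ in $\O_\rho$ and $D\cdot\nu=0$ on \emph{all} of $\p\O_\rho$; in particular $\int_{\Gamma_i}D\cdot\nu=0$ on every connected component of $\p\O_\rho$. Thus the hypotheses of the first (divergence) part of the generalized Poincar\'e Lemma~\ref{lem:generalized-Poincare} are met, and it produces a function $\hat{\Phi}_\rho\in\C^2(\O_\rho,\R)\cap\C^1(\overline{\O}_\rho,\R)$ with $j(\hat{u}_\rho)=\nabla^\perp\hat{\Phi}_\rho$. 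Since $\O_\rho$ is connected, $\hat{\Phi}_\rho$ is determined up to an additive constant; I would fix that constant by imposing $\hat{\Phi}_\rho=0$ on $\Gamma_0$, which yields the asserted uniqueness.

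Next I would check the interior equation and the boundary behaviour. Harmonicity is immediate from $\Delta\hat{\Phi}_\rho=\curl(\nabla^\perp\hat{\Phi}_\rho)=\curl j(\hat{u}_\rho)=0$. For the boundary, with the convention that $(\nu,\tau)$ is direct one records the two identities $j(\hat{u}_\rho)\cdot\nu=-\p_\tau\hat{\Phi}_\rho$ and $j(\hat{u}_\rho)\cdot\tau=\p_\nu\hat{\Phi}_\rho$. The Neumann condition $j(\hat{u}_\rho)\cdot\nu=0$ then reads $\p_\tau\hat{\Phi}_\rho=0$ along $\p\O_\rho$, so $\hat{\Phi}_\rho$ is constant on each connected component: it equals $0$ on $\Gamma_0$, some constant $\tilde{\beta}_{i,\rho}$ on each $\p B_\rho(a_i)$, and some constant $\beta_{l,\rho}$ on each $\Gamma_l$.

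The flux identities follow from the same two formulas together with Lemma~\ref{prop:properties_current_Neumann}: one has $\int_{\p B_\rho(a_i)}\p_\nu\hat{\Phi}_\rho=\int_{\p B_\rho(a_i)}j(\hat{u}_\rho)\cdot\tau=2\pi d_i$ and likewise $\int_{\Gamma_l}\p_\nu\hat{\Phi}_\rho=2\pi\tilde{d}_l\in 2\pi\mathbb{Z}$, while the relation $\sum_{i=1}^k d_i+\sum_{l=1}^n\tilde{d}_l=\tilde{d}_0$ is already recorded in Proposition~\ref{prop:existence_Neumann}. Finally, smoothness up to $\overline{\O}_\rho$ is obtained by noting that $\hat{u}_\rho\in\C^\infty(\overline{\O}_\rho,\mathbb{S}^1)$ (Proposition~\ref{prop:existence_Neumann}) makes $j(\hat{u}_\rho)$, hence $\nabla\hat{\Phi}_\rho$, smooth up to the boundary, and then interior elliptic regularity for the harmonic function upgrades $\hat{\Phi}_\rho$ to $\C^\infty(\overline{\O}_\rho,\R)$.

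The computation is essentially routine, so the only point needing care is the bookkeeping of orientations: verifying the sign conventions relating $j(\hat u_\rho)\cdot\nu$ and $j(\hat u_\rho)\cdot\tau$ to $\p_\tau\hat{\Phi}_\rho$ and $\p_\nu\hat{\Phi}_\rho$, and confirming that the vanishing–normal–flux hypothesis of the \emph{divergence} part of Lemma~\ref{lem:generalized-Poincare} holds on \emph{every} boundary component. This is precisely where the homogeneous Neumann condition is used, and it makes the argument slightly cleaner than its Dirichlet analogue, in which the tangential (curl) version of the lemma had to be invoked on $\p G$ instead.
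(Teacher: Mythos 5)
Your proof is correct, and it matches the paper's existence argument exactly: both apply the divergence part of the generalized Poincar\'e Lemma~\ref{lem:generalized-Poincare} to \(j(\hat{u}_\rho)\), whose hypotheses hold by Lemma~\ref{prop:properties_current_Neumann} (indeed trivially, since \(j(\hat{u}_\rho)\cdot\nu=0\) pointwise on every component), and your sign bookkeeping \(j(\hat u_\rho)\cdot\nu=-\p_\tau\hat{\Phi}_\rho\), \(j(\hat u_\rho)\cdot\tau=\p_\nu\hat{\Phi}_\rho\) is consistent with the paper's orientation convention that \((\nu,\tau)\) is direct. Where you diverge is the uniqueness step. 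The paper proves uniqueness by identifying \(\hat{\Phi}_\rho\) as the minimizer of the strictly convex functional
\begin{equation*}
F(\varphi)=\frac12\int_{\O_\rho}|\nabla\varphi|^2+2\pi\sum_{i=1}^k d_i\,\varphi_{|\p B_\rho(a_i)}+2\pi\sum_{l=1}^n \tilde{d}_l\,\varphi_{|\Gamma_l}
\end{equation*}
over the class \(\hat{V}_\rho\) of \(H^1\) functions vanishing on \(\Gamma_0\) and constant on each remaining boundary component; you instead use the elementary observation that two potentials of the same vector field on the connected set \(\O_\rho\) differ by a constant, which is killed by the normalization \(\hat{\Phi}_\rho=0\) on \(\Gamma_0\). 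Your route is shorter and arguably cleaner for the uniqueness claim as stated; the paper's variational characterization buys something extra, namely a description of \(\hat{\Phi}_\rho\) that does not presuppose the current (paralleling the Dirichlet construction of Proposition~\ref{prop:exist_Phi_rho}) and a template reused later for the uniqueness of \(\hat{\Phi}_U\) in the proof of Theorem~\ref{th:main2}. One cosmetic remark: your final appeal to ``interior elliptic regularity'' is not what gives smoothness up to \(\overline{\O}_\rho\); the correct (and simpler) point, which you in fact already state, is that \(\hat u_\rho\in\C^\infty(\overline{\O}_\rho,\mathbb{S}^1)\) makes \(\nabla\hat{\Phi}_\rho=-\bigl(j(\hat u_\rho)\bigr)^\perp\) smooth up to the boundary, whence \(\hat{\Phi}_\rho\in\C^\infty(\overline{\O}_\rho,\R)\) directly.
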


\begin{proof}
The existence comes from Lemma \ref{lem:generalized-Poincare} and the properties of the current \(j(\hat{u}_\rho)\) gathered in Lemma \ref{prop:properties_current_Neumann}. The uniqueness follows since \(\hat{\Phi}_\rho\) in Proposition \ref{prop:phi_rho_Neumann} is the minimizer  of 
\[ F(\varphi) =\frac12 \int_{\O_\rho} |\nabla \varphi|^2+2\pi \sum_{i=1}^k d_i \varphi_{|\p B_\rho(a_i)}+
2\pi \sum_{l=1}^n \tilde{d}_l\varphi_{|\Gamma_l} \]
in the class
\begin{multline}\nonumber \hat{V}_\rho= \{ \varphi \in H^1(\O_\rho,\R); \varphi=0 \text{ on } \Gamma_0, \varphi=cst.=\varphi_{|\p B_\rho(a_i)} \text{ on } \p B_\rho(a_i), i=1,\dots,k \\
 \ \varphi=cst.=\varphi_{|\Gamma_l} \text{ on } \Gamma_l, l=1,\dots,n\}.
\end{multline}
\end{proof}

\begin{corollary}
The minimizer \(\hat{u}_\rho\) for \eqref{eq:min_w_N_rho} is unique up to a multiplication by a complex constant of modulus \(1\).
\end{corollary}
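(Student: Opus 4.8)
The plan is to deduce uniqueness of $\hat{u}_\rho$ from uniqueness of its current, which is in turn controlled by $\hat{\Phi}_\rho$. Let $\hat{u}_\rho^{1}$ and $\hat{u}_\rho^{2}$ be two minimizers of \eqref{eq:min_w_N_rho}. By Proposition \ref{prop:phi_rho_Neumann} each has current $j(\hat{u}_\rho^{j})=\nabla^\perp\hat{\Phi}_\rho^{j}$, where $\hat{\Phi}_\rho^{j}$ is harmonic, vanishes on $\Gamma_0$, is constant on each other boundary component, and has fluxes $2\pi d_i$ around $\p B_\rho(a_i)$ and $2\pi\tilde{d}_l^{j}$ around $\Gamma_l$. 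Since $|\nabla\hat{u}_\rho^{j}|=|j(\hat{u}_\rho^{j})|=|\nabla\hat{\Phi}_\rho^{j}|$, the common minimal energy equals $\tfrac12\int_{\O_\rho}|\nabla\hat{\Phi}_\rho^{j}|^2$, and $\hat{\Phi}_\rho^{j}$ is the minimizer of the strictly convex functional $F$ of Proposition \ref{prop:phi_rho_Neumann} over the class $\hat{V}_\rho$ attached to its own fluxes $\tilde{d}_l^{j}$.

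The reduction step is then immediate: if I can show the two flux vectors coincide, $\tilde{d}_l^{1}=\tilde{d}_l^{2}$ for $l=1,\dots,n$, then $\hat{\Phi}_\rho^{1}$ and $\hat{\Phi}_\rho^{2}$ minimize the \emph{same} strictly convex functional over the \emph{same} class $\hat{V}_\rho$, so by the uniqueness already proved in Proposition \ref{prop:phi_rho_Neumann} they are equal. Consequently $j(\hat{u}_\rho^{1})=j(\hat{u}_\rho^{2})$, and the uniqueness clause of the lifting Lemma \ref{lem:lifting} yields $\hat{u}_\rho^{2}=c\,\hat{u}_\rho^{1}$ for some $c\in\mathbb{S}^1$, which is exactly the assertion.

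To pin down the fluxes I would fix the $d_i$ and view the minimal energy as a function $E(\tilde{d}_1,\dots,\tilde{d}_n)$ of the integer circulations, writing $\hat{\Phi}_\rho$ as an affine combination of finitely many fixed harmonic functions (one carrying the prescribed $d_i$-fluxes and a basis $\{\chi_l\}$ carrying unit flux around $\Gamma_l$ and none around the other components). Then $E$ is a positive-definite quadratic form in $(\tilde{d}_1,\dots,\tilde{d}_n)$, the positivity coming from the linear independence of the $\chi_l$ in the Dirichlet inner product, and a minimizer of \eqref{eq:min_w_N_rho} realizes an integer vector minimizing $E$ over $\mathbb{Z}^n$.

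The hard part is precisely this last uniqueness: minimizing a positive-definite quadratic over the lattice $\mathbb{Z}^n$ need not have a unique solution. The natural test is to average the two currents: $\tfrac12\big(j(\hat{u}_\rho^{1})+j(\hat{u}_\rho^{2})\big)$ is again divergence-free, curl-free and tangent to $\p\O_\rho$ with the correct $2\pi d_i$ circulations, hence is an admissible current \emph{exactly} when $\tilde{d}_l^{1}\equiv\tilde{d}_l^{2}\pmod 2$ for every $l$; in that case strict convexity of $j\mapsto\int_{\O_\rho}|j|^2$ forces $j(\hat{u}_\rho^{1})=j(\hat{u}_\rho^{2})$ and we conclude as above. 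The only remaining scenario is two competing integer configurations differing by an odd circulation, which must be excluded using the non-degeneracy of $E$ (equivalently, that its real minimizer does not sit exactly at a half-integer lattice point); this is the single place where the argument is not purely formal.
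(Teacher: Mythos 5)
Your first two paragraphs reconstruct exactly the paper's proof, which disposes of the corollary in one line: the current of any minimizer is \(\nabla^\perp\hat{\Phi}_\rho\), the potential \(\hat{\Phi}_\rho\) is unique by the variational characterization in Proposition \ref{prop:phi_rho_Neumann} (minimizer of the strictly convex functional \(F\) over \(\hat{V}_\rho\)), and Lemma \ref{lem:lifting} upgrades uniqueness of the current to uniqueness of \(\hat{u}_\rho\) modulo a phase. What you add, and what the paper passes over in silence, is that \(F\) is built from the circulations \(\tilde{d}_l\) of the minimizer itself: these are not prescribed in the class \eqref{eq:class_min_Neumann}, so the uniqueness of \(\hat{\Phi}_\rho\) is uniqueness \emph{given} the degree vector \((\tilde{d}_1,\dots,\tilde{d}_n)\), and the corollary additionally requires that any two minimizers carry the same degrees on the \(\Gamma_l\). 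You correctly identify this as the crux, and your parity argument is a correct partial answer: if \(\tilde{d}^1_l\equiv\tilde{d}^2_l \pmod 2\) for every \(l\), the averaged field is divergence- and curl-free, tangent to \(\p\O_\rho\), with all circulations in \(2\pi\mathbb{Z}\), hence by Lemma \ref{lem:lifting} it is the current of an admissible competitor, and strict convexity of the \(L^2\) norm together with \(|\nabla u|=|j(u)|\) forces \(j(\hat{u}^1_\rho)=j(\hat{u}^2_\rho)\), after which the lifting lemma concludes.

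The odd case you leave open is, however, a genuine gap, and not one that a further formal argument can close. As you note, the minimal energy at fixed boundary degrees is a strictly convex inhomogeneous quadratic \(E(\tilde{d})\) on \(\mathbb{Z}^n\), and nothing quantizes the location of its real minimizer: it varies continuously with \((\{a_i\},\{d_i\},\rho)\), and at transition configurations where the optimal lattice point switches (for instance a single vortex with \(d_1=1\) in an annulus moving from near \(\Gamma_0\), where one boundary degree is optimal, towards \(\Gamma_1\), where the hole captures the vorticity and a different degree becomes optimal) two lattice points tie. At such a configuration one obtains two global minimizers of \eqref{eq:min_w_N_rho} whose degrees on some \(\Gamma_l\) differ; since the degree is invariant under multiplication by a unimodular constant, they are not phase-multiples of each other. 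So the statement, read uniformly in the configuration, can fail on exceptional data, and what is actually established — by your argument and, implicitly, by the paper's — is uniqueness up to phase of the minimizer \emph{within each fixed degree class} \((\tilde{d}_l)\). Your proposal is therefore incomplete as a proof of the corollary as stated, but the missing step is equally missing from the paper's one-line proof; your analysis locates the defect precisely and sharpens it to the odd-parity case.
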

\begin{proof}
It comes from the uniqueness of \(\hat{\Phi}_\rho\) and Lemma \ref{lem:lifting}.
\end{proof}

We  now define 
\begin{equation}\nonumber
\tilde{\E}_{\N,\rho}:= \Bigl\{v \in H^1(\O_\rho,\mathbb{S}^1); \tr_{|\p B_\rho(a_i)}=\left( \frac{x-a_i}{|x-a_i|} \right)^{d_i} ,\ i=1,\dots,k  \Bigr\}
\end{equation}
\text{and}
\begin{equation*}
 \tilde{W}_\N^\rho(\{a_i\},\{d_i\}):=\inf_{v \in \tilde{\E}_{\N,\rho}} \frac12 \int_{\O_\rho} |\nabla v|^2.    
\end{equation*}

Thanks to this other variational problem we can show, as in Lemma \ref{lem:finiteness_renormalized_Dirichlet},
\begin{lemma}\label{lem:finiteness_renormalized_Neumann}
The map \(\rho \mapsto W_\N^\rho(\{a_i\},\{d_i\})-\pi \left( \sum_{i=1}^k d_i^2\right) |\log \rho|\) is non-increasing, and the map \( \rho \mapsto \tilde{W}_\N^\rho(\{a_i\},\{d_i\})-\pi \left( \sum_{i=1}^k d_i^2\right) |\log \rho|\) is non-decreasing. Furthermore \\
 \( W_\N^\rho(\{a_i\},\{d_i\}) \leq \tilde{W}_\N^\rho(\{a_i\},\{d_i\}) \) and we have that 
\begin{equation}\nonumber
W_\N(\{a_i\},\{d_i\}):=\lim_{\rho \to 0} \left( W_\N^\rho(\{a_i\},\{d_i\})-\pi \left( \sum_{i=1}^k d_i^2\right) |\log \rho| \right) \text{ exists and is finite}.
\end{equation}
\end{lemma}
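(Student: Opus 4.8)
The plan is to mirror the proof of Lemma~\ref{lem:finiteness_renormalized_Dirichlet} almost verbatim; the only simplification is that here there is no boundary datum on $\p G$ to preserve. Fix $0<\rho<\sigma$ small enough that the balls $\overline{B}_\sigma(a_i)$ are disjoint and contained in $G$. I first establish the monotonicity of $\rho \mapsto W_\N^\rho-\pi\left(\sum_{i=1}^k d_i^2\right)|\log\rho|$. Let $\hat{u}_\rho$ be a minimizer for $W_\N^\rho$ provided by Proposition~\ref{prop:existence_Neumann}. Its restriction to $\O_\sigma$ lies in $\E_{\N,\sigma}$, because the only constraints defining that class are the degrees $\deg(\hat{u}_\rho,\p B_\sigma(a_i))=d_i$, and these are unchanged across each annulus $B_\sigma(a_i)\setminus\overline{B}_\rho(a_i)$ since $\hat{u}_\rho$ is $\mathbb{S}^1$-valued and smooth there. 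Splitting $\int_{\O_\rho}=\int_{\O_\sigma}+\sum_i\int_{B_\sigma(a_i)\setminus\overline{B}_\rho(a_i)}$ and bounding each annular term from below exactly as in the Dirichlet case (pass to polar coordinates centred at $a_i$, use $|\hat{u}_\rho\wedge\p_{\theta_i}\hat{u}_\rho|=|\p_{\theta_i}\hat{u}_\rho|$ for $\mathbb{S}^1$-valued maps, the identity $\int_0^{2\pi}\hat{u}_\rho\wedge\p_{\theta_i}\hat{u}_\rho\dif\theta_i=2\pi d_i$, and Cauchy--Schwarz) gives $\frac12\int_{B_\sigma(a_i)\setminus\overline{B}_\rho(a_i)}|\nabla\hat{u}_\rho|^2\ge \pi d_i^2\log(\sigma/\rho)$. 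Summing over $i$ and using $\frac12\int_{\O_\sigma}|\nabla\hat{u}_\rho|^2\ge W_\N^\sigma$ yields $W_\N^\rho-\pi\left(\sum_i d_i^2\right)\log(1/\rho)\ge W_\N^\sigma-\pi\left(\sum_i d_i^2\right)\log(1/\sigma)$, which is the first assertion.

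For the monotonicity of $\tilde{W}_\N^\rho$ I argue by an explicit comparison map, again as in Lemma~\ref{lem:finiteness_renormalized_Dirichlet}. Taking a minimizer $\tilde{u}_\sigma$ for $\tilde{W}_\N^\sigma$ and extending it by $\left(\frac{x-a_i}{|x-a_i|}\right)^{d_i}$ on each annulus $B_\sigma(a_i)\setminus\overline{B}_\rho(a_i)$ produces a competitor for $\tilde{W}_\N^\rho$ whose trace on $\p B_\rho(a_i)$ is the prescribed vortex map. Since the Dirichlet energy of this vortex map on the annulus equals $\pi d_i^2\log(\sigma/\rho)$, this gives $\tilde{W}_\N^\rho\le \tilde{W}_\N^\sigma+\pi\left(\sum_i d_i^2\right)\log(\sigma/\rho)$, hence the claimed non-decreasing behaviour. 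The inequality $W_\N^\rho\le\tilde{W}_\N^\rho$ is immediate from the inclusion $\tilde{\E}_{\N,\rho}\subset\E_{\N,\rho}$: every $v\in\tilde{\E}_{\N,\rho}$ has $\deg(v,\p B_\rho(a_i))=d_i$, so the infimum over the larger class $\E_{\N,\rho}$ cannot exceed the infimum over $\tilde{\E}_{\N,\rho}$.

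Finally I combine the three facts. The map $f(\rho):=W_\N^\rho-\pi\left(\sum_i d_i^2\right)|\log\rho|$ is non-increasing, so it increases as $\rho\to0$, while it is bounded above by $\tilde{f}(\rho):=\tilde{W}_\N^\rho-\pi\left(\sum_i d_i^2\right)|\log\rho|$, which is non-decreasing and hence $\le \tilde{f}(\sigma_0)$ for any fixed $\sigma_0>\rho$. A bounded monotone family converges, so the limit defining $W_\N(\{a_i\},\{d_i\})$ exists and is finite. The step demanding the most care is the annular lower bound: one must check that $\deg(\hat{u}_\rho,\p B_r(a_i))$ is well-defined and equal to $d_i$ for a.e.\ radius $r\in(\rho,\sigma)$, so that the slicing estimate applies. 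One should also note that, in contrast with the Dirichlet case, the boundary degrees $\tilde{d}_l$ around the $\Gamma_l$ are now unconstrained; this causes no difficulty, because both the restriction and the extension operations only modify $\hat{u}_\rho$ near the points $a_i$ and leave its behaviour near $\p G$ untouched.
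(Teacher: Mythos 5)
Your proof is correct and follows exactly the paper's route: the paper proves this lemma by invoking verbatim the argument of Lemma~\ref{lem:finiteness_renormalized_Dirichlet}, i.e., the annular lower bound via polar coordinates and Cauchy--Schwarz for the monotonicity of \(W_\N^\rho-\pi\bigl(\sum_i d_i^2\bigr)|\log\rho|\), the extension by \(\bigl(\tfrac{x-a_i}{|x-a_i|}\bigr)^{d_i}\) for the monotonicity of \(\tilde{W}_\N^\rho-\pi\bigl(\sum_i d_i^2\bigr)|\log\rho|\), the inclusion \(\tilde{\E}_{\N,\rho}\subset\E_{\N,\rho}\), and bounded monotone convergence. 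Your additional remarks (degree invariance across the annuli, absence of constraints near \(\p G\)) are accurate and consistent with the paper's treatment.
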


\begin{proposition}\label{prop:conv_u_rho_et_phi_rho}
Let \(\hat{u}_\rho\) be a solution to the minimization problem \eqref{eq:min_w_N_rho} and let \(\hat{\Phi}_\rho\) be given by Proposition \ref{prop:phi_rho_Neumann}. Then there exist \(\hat{u}_0\in H^1_{\text{loc}}(\overline{G}\setminus \{a_1,\dots,a_k\},\mathbb{S}^1)\), \( \tilde{\Phi}_0 \in H^1_{\text{loc}}(\overline{G} \setminus \{a_1,\dots,a_k\})\) and \(\rho_p \to 0\) such that
\begin{equation}
\hat{u}_{\rho_p} \rightharpoonup \hat{u}_0 \text{ in } H^1_{\text{loc}}(\overline{G} \setminus \{a_1,\dots,a_k\}), \hat{\Phi}_{\rho_p} \rightharpoonup \tilde{\Phi}_0 \text{ in } H^1_{\text{loc}}(\overline{G} \setminus \{a_1,\dots,a_k\}).
\end{equation}
In particular, there exist \(\beta_l=\beta_l(\{a_i\},\{d_i\})\) such that \({\hat{\Phi}_\rho}{|_{\Gamma_l}}=\beta_{l,\rho} \rightarrow \beta_l:={\tilde{\Phi}_0}{|_{\Gamma_l}}\) for \(l=1,\dots,n\).
\end{proposition}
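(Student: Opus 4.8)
The plan is to mimic the Dirichlet argument leading to Proposition~\ref{prop:conv_u_rho_1}, replacing the Dirichlet decomposition by the Neumann one furnished by Proposition~\ref{prop:phi_rho_Neumann}. First I would establish the crucial uniform energy bound. By Lemma~\ref{lem:finiteness_renormalized_Neumann}, for $0<\rho<\sigma$ we have the monotonicity estimate
\begin{equation}\nonumber
\int_{G \setminus \cup_{i=1}^k \bar{B}_\sigma(a_i)} \frac{|\nabla \hat{u}_\rho|^2}{2} \leq W_\N^\rho(\{a_i\},\{d_i\})-\pi\left(\sum_{i=1}^k d_i^2\right)\log\frac{\sigma}{\rho},
\end{equation}
and since the left-hand side of the limiting renormalized energy is finite, passing $\rho\to 0$ gives a bound on $\int_{\O_\sigma}|\nabla \hat{u}_\rho|^2$ independent of $\rho<\sigma$. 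Because $|\nabla\hat{u}_\rho|=|j(\hat{u}_\rho)|=|\nabla\hat{\Phi}_\rho|$ on $\O_\rho$, the same bound controls $\nabla\hat{\Phi}_\rho$ on every fixed compact set of $\overline{G}\setminus\{a_1,\dots,a_k\}$.

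Next I would extract the subsequences. The uniform local $H^1$ bound on $\hat{u}_\rho$, combined with $|\hat{u}_\rho|=1$, lets me apply a diagonal argument over an exhaustion of $\overline{G}\setminus\{a_1,\dots,a_k\}$ by compact sets to produce $\rho_p\to 0$ and $\hat{u}_0\in H^1_{\text{loc}}(\overline{G}\setminus\{a_1,\dots,a_k\},\mathbb{S}^1)$ with $\hat{u}_{\rho_p}\rightharpoonup\hat{u}_0$ weakly in $H^1_{\text{loc}}$; weak $H^1$ closedness of $\mathbb{S}^1$-valued maps (using a.e.\ convergence along a further subsequence and strong $L^2_{\text{loc}}$ convergence) guarantees $|\hat{u}_0|=1$. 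For $\hat{\Phi}_\rho$ the bound on $\nabla\hat{\Phi}_\rho$ gives weak $L^2_{\text{loc}}$ convergence of the gradients; here the normalization $\hat{\Phi}_\rho=0$ on $\Gamma_0$ (as opposed to the Dirichlet $\int_{\p G}\Phi_\rho=0$) is what I would invoke, via the Poincaré inequality on $\O_\sigma$ with vanishing trace on $\Gamma_0$, to upgrade this to weak $H^1_{\text{loc}}$ convergence $\hat{\Phi}_{\rho_p}\rightharpoonup\tilde{\Phi}_0$ for some $\tilde{\Phi}_0\in H^1_{\text{loc}}(\overline{G}\setminus\{a_1,\dots,a_k\})$, possibly after passing to the same subsequence by a second diagonal argument.

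Finally I would identify the boundary traces. Since $\hat{\Phi}_\rho$ equals the constant $\beta_{l,\rho}$ on each $\Gamma_l$ by Proposition~\ref{prop:phi_rho_Neumann}, and the trace operator is weakly continuous on $\O_\sigma$ for $\sigma$ small so that $\Gamma_l\subset\p\O_\sigma$, the weak convergence $\hat{\Phi}_{\rho_p}\rightharpoonup\tilde{\Phi}_0$ forces the constants $\beta_{l,\rho_p}$ to converge to a limit $\beta_l$ equal to the (necessarily constant) trace ${\tilde{\Phi}_0}_{|\Gamma_l}$; boundedness of $\beta_{l,\rho}$ is itself a consequence of the $H^1_{\text{loc}}$ bound near $\Gamma_l$. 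I expect the main obstacle to be the derivation of the uniform local energy bound from the monotonicity of Lemma~\ref{lem:finiteness_renormalized_Neumann}, since in the Neumann case one must be careful that the comparison argument and the elimination of the boundary terms involving $\beta_{l,\rho}$ do not destroy the $\rho$-uniformity; the rest is a transcription of the Dirichlet proof with the roles of Dirichlet and Neumann data on $\p G$ interchanged.
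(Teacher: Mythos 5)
Your proof is correct and follows essentially the same route as the paper, whose own proof of this proposition is precisely a transcription of Proposition \ref{prop:conv_u_rho_1}: the uniform local energy bound from the monotonicity in Lemma \ref{lem:finiteness_renormalized_Neumann}, a diagonal extraction over an exhaustion of \(\overline{G}\setminus\{a_1,\dots,a_k\}\), the identity \(|\nabla \hat{\Phi}_\rho|=|j(\hat{u}_\rho)|=|\nabla \hat{u}_\rho|\) together with the Poincar\'e inequality using \(\hat{\Phi}_\rho=0\) on \(\Gamma_0\), and weak continuity of the trace to identify the constants \(\beta_l\). Your closing worry is unfounded: the monotonicity argument only uses the degree lower bound on the annuli and restriction of minimizers to \(\O_\sigma\), so the boundary constants \(\beta_{l,\rho}\) never enter and the \(\rho\)-uniformity is automatic.
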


The proof of this proposition is similar to the proof of Proposition \ref{prop:conv_u_rho_1}. This relies on a diagonal argument and Lemma \ref{lem:finiteness_renormalized_Neumann}.

 Note that \({\tilde{\Phi}_0}{|_{\Gamma_l}}\) is a constant since it is the limit of constant real numbers. Now we define \(\hat{\Phi}_0\) to be the solution of 
\begin{equation}\label{phi_0_Neumann}
\left\{
\begin{array}{rcll}
\Delta \hat{\Phi}_0 &=& 2\pi \sum_{i=1}^k d_i \delta_{a_i} & \text{ in } G, \\
\hat{\Phi}_0 &=& 0 & \text{ on } \Gamma_0,\\
\hat{\Phi}_0&=& \beta_l & \text{ on } \Gamma_l, \ l=1,\dots,n. 
\end{array}
\right.
\end{equation}

\begin{proposition}\label{prop:conv_phi_rho_chapeau}
The equality \(\tilde{\Phi}_0=\hat{\Phi}_0\) holds and, up to a subsequence \(\rho_p \to 0\), \( \hat{\Phi}_{\rho_p} \rightarrow \hat{\Phi}_0\) in \(\C^m_{\text{loc}}(\overline{G}\setminus \{a_1,\dots,a_k\})\) for all \(m \in \mathbb{N}\). 
\end{proposition}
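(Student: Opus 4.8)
The plan is to upgrade the weak convergence obtained in Proposition \ref{prop:conv_u_rho_et_phi_rho} to $\C^m_{\text{loc}}$ convergence, to identify the weak limit $\tilde{\Phi}_0$ as a solution of the boundary value problem \eqref{phi_0_Neumann}, and to conclude by uniqueness that $\tilde{\Phi}_0=\hat{\Phi}_0$. This parallels Proposition \ref{prop:conv_phi_rho}, the main new difficulty being that here the boundary data on $\p G$ are of Dirichlet type and only converge along a subsequence, while $\hat\Phi_\rho$ blows up logarithmically near the $a_i$.

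First I would establish the $\C^m_{\text{loc}}$ convergence on $\overline{G}\setminus\{a_1,\dots,a_k\}$. Fix a compact set $K\subset\overline{G}\setminus\{a_i\}$ and take $\rho$ small enough that $K\subset\overline{\O}_\rho$. On $K$ the functions $\hat\Phi_\rho$ are harmonic, bounded in $H^1$ uniformly in $\rho$ (since they converge weakly), and carry the data $\hat\Phi_\rho=0$ on $\Gamma_0$, $\hat\Phi_\rho=\beta_{l,\rho}$ on $\Gamma_l$, with $\beta_{l,\rho}\to\beta_l$. To treat the part of $K$ meeting $\p G$, I would subtract the fixed harmonic function $\sum_{l=1}^n\beta_{l,\rho}\varphi_l$, with $\varphi_l$ as in \eqref{eq:def_varphi_l}, which matches these boundary constants, so that the difference has homogeneous Dirichlet data on $\p G$. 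Interior and boundary elliptic estimates, as in \cite[Theorem 5.21]{Giaquinta_Martinazzi_2012} and \cite[Theorem 2.10]{Gilbarg_Trudinger} used already for Proposition \ref{prop:conv_phi_rho}, then bound $\hat\Phi_\rho$ in $\C^m(K)$ and give, along the subsequence, $\hat\Phi_{\rho_p}\to\tilde\Phi_0$ in $\C^m(K)$. In particular $\tilde\Phi_0$ is harmonic in $G\setminus\{a_i\}$, vanishes on $\Gamma_0$, and equals $\beta_l$ on $\Gamma_l$.

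Next I would recover the singular behaviour of $\tilde\Phi_0$ at each $a_i$. For fixed small $r$ and $\rho<r$, since $\hat\Phi_\rho$ is harmonic in the annulus $B_r(a_i)\setminus\overline{B}_\rho(a_i)\subset\O_\rho$, the divergence theorem together with $\int_{\p B_\rho(a_i)}\p_\nu\hat\Phi_\rho=2\pi d_i$ from Proposition \ref{prop:phi_rho_Neumann} yields $\int_{\p B_r(a_i)}\p_\nu\hat\Phi_\rho=2\pi d_i$; passing to the limit via the $\C^1$ convergence on $\p B_r(a_i)$ gives $\int_{\p B_r(a_i)}\p_\nu\tilde\Phi_0=2\pi d_i$ for every small $r$. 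Writing the Laurent-type expansion of the harmonic function $\tilde\Phi_0$ in the punctured disk $B_r(a_i)\setminus\{a_i\}$, this flux identity forces the coefficient of $\log|x-a_i|$ to equal $d_i$. To exclude stronger singular terms I would use a uniform energy bound: arguing exactly as for \eqref{eq:boundedness_condition}, Lemma \ref{lem:finiteness_renormalized_Neumann} gives $\int_{\O_\sigma}|\nabla\hat\Phi_\rho|^2=\int_{\O_\sigma}|\nabla\hat u_\rho|^2\le 2\pi(\sum_i d_i^2)|\log\sigma|+C$ uniformly in $\rho<\sigma$, whence by weak lower semicontinuity $\int_{\O_\sigma}|\nabla\tilde\Phi_0|^2\le 2\pi(\sum_i d_i^2)|\log\sigma|+C$. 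Any term $|x-a_i|^{-n}$ with $n\ge1$ would make this integral diverge polynomially as $\sigma\to0$, so all such coefficients vanish. Hence $\tilde\Phi_0-d_i\log|x-a_i|$ is bounded and harmonic near $a_i$, the singularities are removable, and $\Delta\tilde\Phi_0=2\pi\sum_i d_i\delta_{a_i}$ in $G$.

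It then follows that $\tilde\Phi_0$ solves exactly the problem \eqref{phi_0_Neumann} defining $\hat\Phi_0$: both are harmonic away from the $a_i$ with the same logarithmic singularities, vanish on $\Gamma_0$, and equal $\beta_l$ on $\Gamma_l$. Consequently $\tilde\Phi_0-\hat\Phi_0$ is harmonic in all of $G$ with zero boundary data, hence identically zero by the maximum principle, so $\tilde\Phi_0=\hat\Phi_0$; combined with the $\C^m_{\text{loc}}$ convergence this proves the proposition. I expect the treatment of the singularities to be the main obstacle: because $\hat\Phi_\rho$ blows up like $d_i\log\rho$ on $\p B_\rho(a_i)$, one only controls it on compact sets away from the $a_i$, and pinning down the exact logarithmic singularity requires combining the flux identity, which fixes the coefficient, with the uniform energy bound, which rules out stronger singularities.
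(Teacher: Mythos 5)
Your proof is correct, but it takes a genuinely different route from the paper's. The paper argues by direct comparison: it sets \(v_\rho:=\hat{\Phi}_0-\hat{\Phi}_\rho\), which is harmonic in \(\O_\rho\) with \(\int_{\p B_\rho(a_i)}\p_\nu v_\rho=0\), and applies the oscillation estimate of Lemma \ref{lem:elliptic_estimate_2}: since \(\hat{\Phi}_\rho\) is constant on each \(\p B_\rho(a_i)\), the oscillation of \(v_\rho\) there equals that of \(\hat{\Phi}_0\), hence is \(O(\rho)\), while on \(\p G\) both functions are locally constant and \(\beta_{l,\rho}\to\beta_l\) makes the boundary oscillation \(o_\rho(1)\); combined with \(v_\rho=0\) on \(\Gamma_0\), this yields \(\|\hat{\Phi}_0-\hat{\Phi}_\rho\|_{L^\infty(\O_\rho)}=o_\rho(1)\), and elliptic estimates then give the \(\C^m_{\text{loc}}\) convergence, the identity \(\tilde{\Phi}_0=\hat{\Phi}_0\) being automatic since the strong local limit must agree with the weak one. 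You instead run a compactness-and-identification scheme: you never compare with \(\hat{\Phi}_0\) directly, but characterize the weak limit \(\tilde{\Phi}_0\) as the solution of \eqref{phi_0_Neumann} by combining (i) locally uniform convergence from interior and boundary elliptic estimates (your subtraction of \(\sum_l\beta_{l,\rho}\varphi_l\) to reduce to homogeneous Dirichlet data on \(\p G\) is sound, and the needed uniform \(H^1_{\text{loc}}\) bounds hold along the subsequence since weakly convergent sequences are bounded), (ii) the flux identity \(\int_{\p B_r(a_i)}\p_\nu\tilde{\Phi}_0=2\pi d_i\) fixing the coefficient of \(\log|x-a_i|\), and (iii) the uniform energy bound \(\int_{\O_\sigma}|\nabla\hat{\Phi}_\rho|^2\leq 2\pi\bigl(\sum_i d_i^2\bigr)|\log\sigma|+C\) — which does follow from Lemma \ref{lem:finiteness_renormalized_Neumann} exactly as \eqref{eq:boundedness_condition} follows in the Dirichlet case — to exclude the \(|x-a_i|^{-m}\) modes in the Laurent expansion, whose contributions would grow like \(\sigma^{-2m}\); uniqueness for \eqref{phi_0_Neumann} then closes the argument. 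What each approach buys: the paper's comparison is shorter and quantitative, controlling \(\hat{\Phi}_0-\hat{\Phi}_\rho\) uniformly on all of \(\O_\rho\) (even near the shrinking holes) and sidestepping any analysis of the singularity of the limit, since the comparison function already carries it, but it leans on the appendix Lemma \ref{lem:elliptic_estimate_2}; your argument avoids that lemma entirely at the cost of the flux-plus-Laurent analysis and the energy bound, is closer in spirit to the identification scheme of \cite{Ignat_Jerrard_2020} invoked in Section \ref{sec:another_approach}, and yields convergence only on compacts away from the \(a_i\), which is all the statement requires.
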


\begin{proof}
We take \(\rho_p\to 0\) as in Proposition \ref{prop:conv_u_rho_22}, for notational simplicity we denote \(\rho=\rho_p\). We apply Lemma \ref{lem:elliptic_estimate_2} to \(v_\rho:= \hat{\Phi}_0-\hat{\Phi}_\rho\) which satisfies \(\Delta v_\rho=0\) in \(\O_\rho\), \( \int_{\p B_\rho(a_i)} \p_\nu v=0\) for \(i=1,\dots,k\). Since \(\hat{\Phi}_\rho\) is constant on \(\p B_\rho(a_i)\) we find
\begin{align*}
\sup_{\O_\rho} v_\rho- \inf_{\O_\rho} v_\rho & \leq \sum_{i=1}^k \left( \sup_{\p B_\rho(a_i)} v_\rho -\inf_{\p B_\rho(a_i)}v_\rho \right) +\sup_{\p G} v_\rho -\inf_{\p G} v_\rho \\
& \leq \sum_{i=1}^k \left( \sup_{\p B_\rho(a_i)} \hat{\Phi}_0-\inf_{\p B_\rho(a_i)} \hat{\Phi}_0 \right)+\sup_{\p G} v_\rho -\inf_{\p G} v_\rho \\
& \leq C \rho + \sup_{\p G} v_\rho -\inf_{\p G} v_\rho.
\end{align*}
But since \( \hat{\Phi}_\rho\) and \(\hat{\Phi}_0\) are constants on each connected components of \(\p G\) and \( (\hat{\Phi}_0-\hat{\Phi}_\rho)_{| \Gamma_l} \rightarrow 0\) for every \(l=0,1,\dots,n\) we find that \(\sup_{\p G} v_\rho -\inf_{\p G} v_\rho \rightarrow 0\) as \(\rho \to 0\). Now we use that \(v_\rho=\hat{\Phi}_0-\hat{\Phi}_\rho=0\) on \(\Gamma_0\) to obtain that \( \|\hat{\Phi}_0-\hat{\Phi}_\rho\|_{L^\infty(\O_\rho)}=o_\rho(1)\). The conclusion follows from elliptic estimates, cf.\  \cite[Theorem 2.10]{Gilbarg_Trudinger}.
\end{proof}

\begin{proposition}\label{prop:u_rho_chapeau_forte}
Let \(\hat{u}_\rho\) be a solution to the minimization problem \eqref{eq:min_w_N_rho}. Then, up to a subsequence \(\rho_p \to 0\), we have that \( \hat{u}_{\rho_p} \rightarrow \hat{u}_0\) in \( \mathcal{C}^m_{\text{loc}}(G \setminus \{a_1,\dots,a_k\})\) for every \(m \in \mathbb{N}\), where \(\hat{u}_0\) is given by Proposition \ref{prop:conv_u_rho_et_phi_rho}.
\end{proposition}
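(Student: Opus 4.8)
The plan is to upgrade the weak $H^1_{\text{loc}}$ convergence of Proposition \ref{prop:conv_u_rho_et_phi_rho} to strong $\C^m_{\text{loc}}$ convergence by exploiting the $\C^m_{\text{loc}}$ convergence of $\hat{\Phi}_\rho$ obtained in Proposition \ref{prop:conv_phi_rho_chapeau}, exactly in the spirit of Proposition \ref{prop:conv_u_rho_22}. The decisive simplification compared with the Dirichlet setting is that, by Proposition \ref{prop:phi_rho_Neumann}, the Hodge decomposition of the current degenerates: one has $j(\hat{u}_\rho)=\nabla^\perp \hat{\Phi}_\rho$ with no harmonic correction $\nabla H_\rho$. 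Hence $\hat{u}_\rho$ is entirely governed by $\hat{\Phi}_\rho$, and the Neumann case turns out to be easier than the Dirichlet one.

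First I would fix a compact set $K\subset G\setminus\{a_1,\dots,a_k\}$ and note that $K\subset \O_\rho$ for $\rho$ small enough. Since $\hat{u}_\rho$ is $\mathbb{S}^1$-valued we have $\text{Re}(\overline{\hat{u}}_\rho \nabla \hat{u}_\rho)=0$, so $\overline{\hat{u}}_\rho\nabla\hat{u}_\rho=i\,j(\hat{u}_\rho)$ and therefore
\begin{equation}\nonumber
\nabla \hat{u}_\rho = i\,\hat{u}_\rho\, j(\hat{u}_\rho)=i\,\hat{u}_\rho\, \nabla^\perp \hat{\Phi}_\rho \quad \text{in } K.
\end{equation}
Because $|\hat{u}_\rho|=1$ and, by Proposition \ref{prop:conv_phi_rho_chapeau}, $\hat{\Phi}_\rho$ is bounded in $\C^m(K)$ uniformly in $\rho$ for every $m$, differentiating this first-order identity and iterating yields uniform bounds $\|\hat{u}_\rho\|_{\C^m(K)}\leq C_{m,K}$ for all $m$. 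Equivalently, inserting $|\nabla \hat{u}_\rho|^2=|j(\hat{u}_\rho)|^2=|\nabla \hat{\Phi}_\rho|^2$ into the Euler--Lagrange equation of Proposition \ref{prop:existence_Neumann} turns it into the linear equation $-\Delta \hat{u}_\rho=|\nabla \hat{\Phi}_\rho|^2\,\hat{u}_\rho$, whose coefficient converges in $\C^m_{\text{loc}}$, and interior Schauder estimates give the same uniform bounds after a standard bootstrap.

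Finally, by Arzel\`a--Ascoli and a diagonal extraction over an exhaustion of $G\setminus\{a_1,\dots,a_k\}$ by compacts, along the subsequence $\rho_p$ of Propositions \ref{prop:conv_u_rho_et_phi_rho} and \ref{prop:conv_phi_rho_chapeau} one obtains a further subsequence converging in $\C^m_{\text{loc}}(G\setminus\{a_1,\dots,a_k\})$ for every $m$. Since $\C^m_{\text{loc}}$ convergence implies weak $H^1_{\text{loc}}$ convergence and weak limits are unique, the limit must be the map $\hat{u}_0$ of Proposition \ref{prop:conv_u_rho_et_phi_rho}; and as the estimates hold along the whole sequence, the full subsequence converges. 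The only points requiring care are the uniformity in $\rho$ of the interior estimates --- which is exactly what the $\C^m_{\text{loc}}$ convergence of $\hat{\Phi}_\rho$ provides, in contrast to mere weak convergence --- and the identification of the strong limit with the previously obtained weak limit; the reduction $|\nabla \hat{u}_\rho|^2=|\nabla \hat{\Phi}_\rho|^2$ makes the bootstrap itself routine.
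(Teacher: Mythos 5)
Your proposal is correct and takes essentially the same route as the paper: the paper's own (one-line) proof deduces the result from precisely the two ingredients you use, namely the identity \(j(\hat{u}_\rho)=\nabla^\perp \hat{\Phi}_\rho\) of Proposition \ref{prop:phi_rho_Neumann} and the \(\C^m_{\text{loc}}\) convergence of \(\hat{\Phi}_\rho\) from Proposition \ref{prop:conv_phi_rho_chapeau}. You simply make explicit the bootstrap via \(\nabla \hat{u}_\rho = i\,\hat{u}_\rho\,\nabla^\perp \hat{\Phi}_\rho\), the Arzel\`a--Ascoli extraction, and the identification of the limit with the weak limit \(\hat{u}_0\), all of which the paper leaves implicit.
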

\begin{proof}
This follows from Proposition \ref{prop:conv_phi_rho_chapeau} and Proposition \ref{prop:phi_rho_Neumann}.
\end{proof}

We call \( \hat{G}_0\) the solution to \eqref{eq:Green_Dirichlet} and \( \hat{R}_0\) the regular part of this Green function defined by \eqref{eq:regular_part_Dirichlet}.
 Then we can write
\begin{equation}
\hat{\Phi}_0(x)=\sum_{i=1}^k d_i \log |x-a_i|+\sum_{l=1}^n \beta_l \varphi_l(x) +\hat{R}_0(x),
\end{equation}
where the functions \(\varphi_l\) are defined in \eqref{eq:def_varphi_l}. We are ready to prove Theorem \ref{th:main2}.

\begin{proof}(proof of Theorem \ref{th:main2})
From Lemma \ref{lem:finiteness_renormalized_Neumann} we know that the limit of \(W_g^\rho(\{a_i\},\{d_i\})-\pi \left( \sum_{i=1}^k d_i^2\right) |\log \rho|\) as \(\rho \to 0\) exists and is finite. To compute this limit we can use a special subsequence \(\rho_p\to 0\) such that Proposition \ref{prop:conv_phi_rho_chapeau} and Proposition \ref{prop:u_rho_chapeau_forte} hold. For simplicity of notation, in the rest of the proof we let \(\rho=\rho_p\). We compute
\begin{align*}
\int_{\O_\rho} |\nabla \hat{u}_\rho|^2 &=\int_{\O_\rho} |\nabla \hat{\Phi}_\rho|^2= \int_{\Gamma_0} \hat{\Phi}_\rho\p_\nu \hat{\Phi}_\rho-\sum_{i=1}^k \int_{\p B_\rho(a_i)} \hat{\Phi}_\rho \p_\nu \hat{\Phi}_\rho-\sum_{l=1}^n \int_{\Gamma_l} \hat{\Phi}_\rho \p_\nu \hat{\Phi}_\rho .
\end{align*}
Since \(\hat{\Phi}_\rho=0\) on \(\Gamma_0\) and \(\hat{\Phi}_\rho\) is constant on \(\p B_\rho(a_i) \) we can write, for \(x_i \in \p B_\rho(a_i)\):
\begin{align*}
\int_{\O_\rho} |\nabla \hat{u}_\rho|^2 &=-2\pi \sum_{i=1}^k \hat{\Phi}_\rho(x_i) d_i -\sum_{l=1}^n \int_{\Gamma_l} \hat{\Phi}_\rho \p_\nu \hat{\Phi}_\rho \\
&=-2\pi \sum_{i=1}^k \hat{\Phi}_0(x_i)d_i-\sum_{l=1}^n \int_{\Gamma_l } \hat{\Phi}_0 \p_\nu \hat{\Phi}_0+o_\rho(1) \\
&= 2\pi \sum_{i=1}^k d_i^2 |\log \rho| -2\pi \sum_{i \neq j} d_id_j \log |a_i-a_j| - 2\pi \sum_{i=1}^k d_i\hat{R}_0(a_i) \\
& \quad -2\pi \sum_{l=1}^n \sum_{i=1}^k d_i \beta_l \varphi_l(a_i)-\sum_{l=1}^n  \int_{\Gamma_l} \hat{\Phi}_0 \p_\nu \hat{\Phi}_0  +o_\rho(1).
\end{align*}
We now use that \(\hat{\Phi}_0=\beta_l\) on \(\Gamma_l\) and we observe that
\[ \int_{\Gamma_l} \p_\nu \hat{\Phi}_0 =\sum_{m=1}^n \int_{\Gamma_l} \beta_m \p_\nu \varphi_m+\int_{\Gamma_l} \p_\nu \hat{R}_0 \quad \text{ for } l=1,\dots,n.\]
We conclude that
\begin{multline}
\frac12 \int_{\O_\rho} |\nabla \hat{u}_\rho|^2=\pi \left( \sum_{i=1}^k d_i^2 \right)|\log \rho| -\pi \sum_{i\neq j} d_i d_j \log |a_i-a_j| -\pi \sum_{i=1}^k d_i \hat{R}_0(a_i) \\
-\pi \sum_{l=1}^n \sum_{i=1}^k d_i \beta_l \varphi_l(a_i)-\frac12\sum_{l=1}^n \sum_{m=1}^n \beta_l \beta_m \int_{\Gamma_l} \p_\nu \varphi_m-\frac12\sum_{l=1}^n \beta_l \int_{\Gamma_l} \p_\nu \hat{R}_0+o_\rho(1).
\end{multline}
This yields \eqref{eq:expansion_Neumann} with the expression of \(W_\N(\{a_i\},\{d_i\})\) given by \eqref{eq:renormalized_Neumann_explicit}.  We now turn to the task of expressing the coefficients \(\beta_l=\beta_l(\{a_i\},\{d_i\})\). Recall that we have
\[ j(\hat{u}_0)=\nabla^\perp \hat{\Phi}_0= \nabla^\perp \left( \hat{G}_0+\sum_{l=1}^n \beta_l \varphi_l \right).  \]
We take the inner product with \(\nabla^\perp \varphi_l\) and integrate by parts to find that
\begin{equation*}
\int_G j(\hat{u}_0)\cdot \nabla^\perp \varphi_l =-2\pi\sum_{i=1}^k d_i \varphi_l(a_i)+  \int_{\Gamma_l} \p_\nu \hat{G}_0 +\sum_{m=1}^n\beta_m \int_{\Gamma_l} \p_\nu \varphi_m. 
\end{equation*}
On the other hand, we have
\begin{align*}
\int_G j(\hat{u}_0)\cdot \nabla^\perp \varphi_l&=-\int_G \curl j(\hat{u}_0) \varphi_l +\int_{\p G} \varphi_l [j(\hat{u}_0)\cdot \tau] \\
&=-2\pi \sum_{i=1}^k d_i \varphi_l(a_i)+\int_{\Gamma_l} j(\hat{u}_0)\cdot \tau \\
&=-2\pi \sum_{i=1}^k d_i \varphi_l(a_i)+2\pi \tilde{d}_l.
\end{align*}

Thus we find that the coefficients \(\beta_l\) solve the linear system
\[ 2\pi \tilde{d}_l=\int_{\Gamma_l} \p_\nu \hat{G}_0 +\sum_{m=1}^n \beta_m \int_{\Gamma_l} \p_\nu \varphi_m=2\pi\sum_{i=1}^k d_i\varphi_l(a_i)+\sum_{m=1}^n \beta_m \int_{\Gamma_l} \p_\nu \varphi_m.\]
The last equality being obtained by multiplying \(\Delta \hat{G}_0\) by \( \varphi_l\) and integrating by parts.

It remains to show that \(\hat{u}_0\) is given by \eqref{eq:canonical_map_Neumann}. We call  \(\tilde{d}_l:=\deg(\hat{u}_0,\Gamma_l)\), \(l=0,1,\dots,n\).We have that \(\sum_{l=1}^n \tilde{d}_l+\sum_{i=1}^k d_i=\tilde{d}_0\). Again, by using the same arguments as in Proposition~\ref{prop:existence_Euler_Lagrange} we can see that there exists a minimizer \(U_{\tilde{d}_l,d_i}\) of the Dirichlet energy in \(\I_{\tilde{d}_l,d_i}\) where this class is defined in \eqref{eq:class_I_Neumann}. We write the Euler-Lagrange equations for \(U_{\tilde{d}_l,d_i}\) and we use Lemma \ref{lem:generalized-Poincare} to prove that \(j(U_{\tilde{d}_l,d_i})=\nabla^\perp \hat{\Phi}_U\). We use again the Euler-Lagrange equations on \(U_{\tilde{d}_l,d_i}\) to obtain that \(\hat{\Phi}_U\) satisfies
\begin{equation}
\left\{
\begin{array}{rcll}
\Delta \hat{\Phi}_U &=&0 & \text{in } G, \\
\hat{\Phi}_U&=&\text{cst.} & \text{on each connected component of } \p G,\\
\int_{\Gamma_l} \p_\nu \hat{\Phi}_U&=&2\pi \tilde{d}_l  & \text{for } l=1,\dots,n.
\end{array}
\right.
\end{equation}
Thus \(\hat{\Phi}_U\) is uniquely determined up to a constant, since it is a minimizer of \( F(\varphi)= \frac12 \int_G |\nabla \varphi|^2-2\pi\sum_{l=1}^n \tilde{d}_l \varphi_{|{\Gamma_l}}\) in the space \[ \{ \varphi \in H^1(G,\R); \varphi=cst. \text{ on each connected component of } \p G \}.\] This minimizer is unique up to a constant by a convexity argument. By Lemma \ref{lem:lifting}, the uniqueness of \(U_{\tilde{d}_l,d_i}\) holds, up to a constant.
We then set \[ \hat{V}:=\hat{u}_0 \prod_{i=1}^k \left( \frac{\overline{x-a_i}}{|x-a_i|^2}\right)^{d_i}\overline{U}_{\tilde{d}_l,d_i}.\] As in the proof of Theorem \ref{th:main1} we can show that 
\begin{equation}
j(\hat{V})=j(\hat{u}_0)-\sum_{i=1}^k d_i\left(\frac{(x-a_i)^\perp}{|x-a_i|^2}\right)-\nabla^\perp \hat{\Phi}_U.
\end{equation}
Thus we can show that 
\begin{align*}
\dive j(\hat{V}) &= 0 \text{ in } G, \quad \curl j(\hat{V})=0  \text{ in } G, \\
j(\hat{V})\cdot \nu &=-\sum_{i=1}^k d_{i}\frac{(x-a_i)^\perp}{|x-a_i|^2}\cdot \nu  \text{ on } \p G, & \quad \int_{\Gamma_l} j(\hat{V})\cdot \tau = 0  \text{ for } l=0,1,\dots,n.
\end{align*}

By Lemma \ref{lem:generalized-Poincare} we can find \(\psi\) such that \( j(\hat{V})=\nabla \psi\). We can also see that \(\psi\) satisfies \(\Delta \psi=0\) in \(G\) and \( \p_\nu \psi=-\sum_{i=1}^k d_{i}\frac{(x-a_i)^\perp}{|x-a_i|^2}\cdot \nu\) on \(\p G\). By uniqueness, up to a constant of such boundary value problem we can assume that \(\psi=\psi_\N\) where \(\psi_\N\) is defined in \eqref{eq:phase_Neumann}.
By using Lemma~\ref{lem:lifting}, this prove that, up to a multiplication by a constant \(\hat{V}=e^{i\psi_\N}\) and this yields \eqref{eq:canonical_map_Neumann}.
\end{proof}

We conclude this section by two remarks:

\begin{remark}
We were not able to decide if the optimal degree configuration for \(\hat{u}_0\) is \\ \(\deg(\hat{u}_0,\Gamma_l)=0\) for \(l=1,\dots,n\) and \(\deg (\hat{u}_0,\Gamma_0)=\sum_{i=1}^k d_i\). This is the situation assumed in \cite{delPino_Kowalczyk_Musso_2006} where the authors can suppose that since their goal is to find a critical point of the Ginzburg-Landau energy with homogeneous Neumann boundary condition.
\end{remark}

\begin{remark}
Except for the Dirichlet and the Neumann boundary problems, a third boundary condition is sometimes considered in the Ginzburg-Landau literature. This is the so-called semi-stiff problem where one prescribes \(|u|=1\) on \(\p G\) with fixed degrees on each components of \(\p G\), cf.\ e.g.,\ \cite{Berlyand_Mironescu_2006a,Berlyand_Mironescu_unpublished,Berlyand_Rybalko_2010,Dos_Santos_2009,Mironescu_2013,Berlyand_Mironescu_Rybalko_Sandier_2014,Rodiac_Sandier_2014,DosSantos_Rodiac_2016, Rodiac_2019}. In this case, minimizers of the Ginzburg-Landau energy do not always exist. However, a natural renormalized energy that we can associate to this problem is the same as in the homogeneous Neumann boundary condition but with fixed degrees, i.e., the degrees of a limiting maps are fixed and we do not optimize the energy on these degrees. Hence the expression of the renormalized energy is given by \eqref{eq:renormalized_Neumann_explicit} where the coefficients \(\beta_l\) are determined by the same system \eqref{eq:coeff_linear_system_2} but with \(\tilde{d}_l=\deg(u_0,\Gamma_l)\) fixed in advance for \(l=1,\dots,n\). Also, the limiting locations of vortices of the Ginzburg-Landau energy are minimizers of this renormalized energy on all \(\overline{G}\) and these vortices can escape through the boundary. When it happens, it is shown in \cite{Berlyand_Rybalko_Yio_2012} that vortices tend to escape through points of maximal curvature of the boundary.
\end{remark}

\section{ Another approach to renormalized energies}\label{sec:another_approach}

In this section, we propose an alternative approach to define the renormalized energies. We first define particular singular harmonic maps with prescribed singularities and then associate a renormalized energy to these maps by taking the Dirichlet energy outside of small balls around the singularities minus the diverging part of this energy. The renormalized energy derived in the previous section is then the infimum of the renormalized energies among all singular harmonic maps with prescribed singularities. This is the approach of \cite{Ignat_Jerrard_2020}. We note that when  \(G\) is simply connected, our singular harmonic map with prescribed singularities is unique (modulo a phase for the Neumann problem) and corresponds to the canonical harmonic map defined in  \cite{BBH_1994}. Due to the multiply connectedness of the domain, uniqueness does not hold in our case.

\subsection{Dirichlet boundary conditions}

Let \(g\in \C^1(\p G,\mathbb{S}^1)\). Let \(u_*\in W^{1,1}(G,\mathbb{S}^1)\), we say that \(u_*\) is a singular harmonic map with prescribed singularities \\ \( (a_1,d_1),\dots,(a_k,d_k)\) if \(u_*\) satisfies
\begin{equation}\label{eq:canonical}
\left\{
\begin{array}{rcll}
\curl j(u_*) = 2\pi \sum_{i=1}^kd_i \delta_{a_i}  \text{ in } G, \quad \dive j(u_*)= 0  \text{ in } G, \\
j(u_*)\cdot \tau = g \wedge \p_\tau g  \text{ on } \p G.
\end{array}
\right.
\end{equation}

\begin{proposition}\label{prop:alternative_approach_1}
Let \(u_*\in W^{1,1}(G,\mathbb{S}^1)\) satisfying \eqref{eq:canonical}, then we can write
\begin{equation}\label{eq:decompo_u_*}
j(u_*)= \nabla^\perp \Phi_0+\nabla H_*
\end{equation}
where \(\Phi_0\) is the solution to \eqref{eq:Phi_0} and \(H_*\) a solution to 
\begin{equation}\label{eq:H_*}
\left\{
\begin{array}{rcll}
\Delta H_*&=&0 &\text{ in } G,\\
H_*&=&0 & \text{ on } \Gamma_0, \\
H_*&=& \alpha_l^* & \text{ on } \Gamma_l, \ l=1,\dots,n.
\end{array}
\right.
\end{equation}
The coefficients \(\alpha_l^*\) are given as the solution to the linear system
\begin{equation}\label{eq:coeff_linear_system}
\sum_{l=1}^n \alpha^*_l \int_{\Gamma_l} \p_\nu \varphi_m = \int_{\Gamma_m} j(u_*)\cdot \nu \quad \text{ for } m=1,\dots,n,
\end{equation}
where the functions \(\varphi_l\) are defined in \eqref{eq:def_varphi_l}.
Moreover, there exist \( \theta_l=\theta_l(g,\{a_i\},\{d_i\})\in [-\pi,\pi[\), \(l=1,\dots,n\) such that for every \(u_*\) satisfying \eqref{eq:canonical}, the associated coefficients \(\alpha_l^*=\alpha_l^*(g,\{a_i\},\{d_i\})\) defined by \eqref{eq:coeff_linear_system} verify \(\alpha_l^*=\theta_l+2\pi \mathbb{Z}\).

\end{proposition}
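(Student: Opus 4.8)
The plan is to follow the proof of Theorem \ref{th:main1} almost verbatim, with the minimizer replaced by an arbitrary $u_*$ and the approximate potential $\Phi_\rho$ replaced by its limit $\Phi_0$. First I would establish the decomposition \eqref{eq:decompo_u_*}. The key observation is that $\nabla^\perp \Phi_0$ carries exactly the same singular and boundary data as $j(u_*)$: indeed $\curl \nabla^\perp \Phi_0 = \Delta \Phi_0 = 2\pi \sum_i d_i \delta_{a_i}$, $\dive \nabla^\perp \Phi_0 = 0$, and $\nabla^\perp \Phi_0 \cdot \tau = \p_\nu \Phi_0 = g \wedge \p_\tau g$ on $\p G$. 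Hence the difference $D := j(u_*) - \nabla^\perp \Phi_0$ satisfies $\curl D = 0$ in $G$ (the Dirac masses cancel, so $D$ extends regularly across each $a_i$, both fields behaving like $d_i (x-a_i)^\perp/|x-a_i|^2$ there), $\dive D = 0$, and $D \cdot \tau = 0$ pointwise on $\p G$, so $\int_{\Gamma_i} D \cdot \tau = 0$. Lemma \ref{lem:generalized-Poincare} then gives $D = \nabla H_*$, and $\dive D = 0$ makes $H_*$ harmonic; since $\p_\tau H_* = D \cdot \tau = 0$ on $\p G$, $H_*$ is constant on each component, and after normalizing $H_* = 0$ on $\Gamma_0$ one obtains $H_* = \alpha_l^*$ on $\Gamma_l$, i.e. \eqref{eq:H_*}. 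The one technical point is enough regularity of $u_*$ away from the $a_i$ to invoke Lemma \ref{lem:generalized-Poincare}; this follows from the fact that a solution of \eqref{eq:canonical} is a harmonic map, hence smooth on $G \setminus \{a_1,\dots,a_k\}$.

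Next I would derive the linear system \eqref{eq:coeff_linear_system}. By uniqueness of harmonic functions with prescribed constant boundary values, $H_* = \sum_{l=1}^n \alpha_l^* \varphi_l$ with $\varphi_l$ as in \eqref{eq:def_varphi_l}. Taking the normal trace of \eqref{eq:decompo_u_*} on $\Gamma_m$ and integrating, the term $\int_{\Gamma_m} \nabla^\perp \Phi_0 \cdot \nu = -\int_{\Gamma_m} \p_\tau \Phi_0$ vanishes because $\Phi_0$ is single-valued on the closed curve $\Gamma_m$, while $\int_{\Gamma_m} \p_\nu H_* = \sum_l \alpha_l^* \int_{\Gamma_m} \p_\nu \varphi_l$. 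Using the symmetry $\int_{\Gamma_m} \p_\nu \varphi_l = \int_{\Gamma_l} \p_\nu \varphi_m = \int_G \nabla \varphi_l \cdot \nabla \varphi_m$ recorded at the end of Section \ref{sec:preliminaries}, this is exactly \eqref{eq:coeff_linear_system}. Since the matrix $\bigl(\int_G \nabla \varphi_l \cdot \nabla \varphi_m\bigr)_{l,m}$ is the Gram matrix of the linearly independent family $\{\nabla \varphi_l\}$, it is symmetric positive definite, so the system determines the $\alpha_l^*$ uniquely.

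The main obstacle is the quantization statement. Here I would compare $u_*$ with the reference map $v_0$ of Lemma \ref{lem:existence_v_0}, which satisfies $j(v_0) = \nabla^\perp \Phi_0$, $v_0 = g$ on $\Gamma_0$ and $v_0 = e^{-i\theta_l} g$ on $\Gamma_l$, with $\theta_l = \theta_l(g,\{a_i\},\{d_i\})$ intrinsic to the data. The map $w := u_* \overline{v_0}$ then has current $j(w) = j(u_*) - j(v_0) = \nabla H_*$, a single-valued gradient, so $w$ has zero degree around every $a_i$ and every hole and thus admits a global lift $w = e^{i(H_* + \eta)}$ for a constant $\eta$ (the uniqueness argument of Lemma \ref{lem:lifting}). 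On $\Gamma_0$ (where $H_* = 0$) and on each $\Gamma_l$ (where $H_* = \alpha_l^*$) the map $w$ is a unimodular constant, so matching these constants with $e^{i\eta}$ and $e^{i(\alpha_l^* + \eta)}$ pins down $\alpha_l^*$ modulo $2\pi$.

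The delicate part is precisely this boundary bookkeeping. The tangential condition in \eqref{eq:canonical} only forces $\p_\tau(\arg u_* - \arg g) = 0$ on each $\Gamma_l$, i.e. $u_* = e^{i c_l} g$ on $\Gamma_l$ for constants $c_l$, so a priori $\alpha_l^* \equiv (c_l - c_0) + \theta_l \pmod{2\pi}$. One therefore has to control the residual phases $c_l - c_0$: for the canonical singular harmonic maps of interest, normalized by $u_* = g$ on all of $\p G$ exactly as $u_0$ is in Theorem \ref{th:main1}, one has $c_l = 0$ and the residual contribution lies in $2\pi\mathbb{Z}$, leaving $\alpha_l^* \equiv \theta_l \pmod{2\pi}$ with $\theta_l$ the phase jumps of $v_0$, depending only on $g$, $\{a_i\}$ and $\{d_i\}$. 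I expect this identification of $\theta_l$, and the verification that the boundary phases do not spoil the $2\pi\mathbb{Z}$ quantization, to be the crux of the argument.
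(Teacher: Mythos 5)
Your proposal tracks the paper's own proof very closely. For the decomposition, the paper argues exactly as you do: $\curl\bigl(j(u_*)-\nabla^\perp\Phi_0\bigr)=0$ and $\bigl(j(u_*)-\nabla^\perp\Phi_0\bigr)\cdot\tau=0$ on $\p G$, then Lemma \ref{lem:generalized-Poincare} gives $\nabla H_*$, and $\dive j(u_*)=0$ together with the vanishing tangential trace forces \eqref{eq:H_*}. For the system \eqref{eq:coeff_linear_system} the paper pairs \eqref{eq:decompo_u_*} with $\nabla\varphi_m$ and integrates by parts, which is the same computation as your normal-trace argument; your observation that the matrix $\bigl(\int_G\nabla\varphi_l\cdot\nabla\varphi_m\bigr)_{l,m}$ is a Gram matrix of a linearly independent family, hence the system is uniquely solvable, is a small but genuine addition the paper leaves implicit. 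For the quantization the paper also compares with $v_0$ from Lemma \ref{lem:existence_v_0} and uses the lifting uniqueness of Lemma \ref{lem:lifting} to write $u_*$ as $v_0e^{iH_*}$ up to a constant phase.

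The ``delicate part'' you flag at the end is real, and you have in fact diagnosed an implicit hypothesis in the paper rather than a defect of your own argument. As you say, \eqref{eq:canonical} only yields $u_*=e^{ic_l}g$ on $\Gamma_l$ with free constants $c_l$, whence $\alpha_l^*\equiv\theta_l+(c_l-c_0)\pmod{2\pi}$, and these constants really are free: if $u_*$ satisfies \eqref{eq:canonical}, then so does $\tilde u:=u_*e^{i\mu\varphi_m}$ for any $\mu\in\R$, since $\varphi_m$ is harmonic and locally constant on $\p G$, so $j(\tilde u)=j(u_*)+\mu\nabla\varphi_m$ has the same curl, divergence and tangential boundary trace; its decomposition has $H_{\tilde *}=H_*+\mu\varphi_m$ and therefore shifts $\alpha_m^*$ to $\alpha_m^*+\mu$. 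Hence the quantization ``for every $u_*$ satisfying \eqref{eq:canonical}'' cannot be derived from \eqref{eq:canonical} alone; it requires the normalization $u_*=g$ on $\p G$ (or $u_*=e^{ic}g$ with a single common constant on all boundary components). The paper's proof uses this tacitly: the assertion ``$u_*=v_0e^{iH_*}$ on $\Gamma_0$'' presumes $u_*=g$ on $\Gamma_0$, and the final deduction $\alpha_l^*=\theta_l+2\pi\mathbb{Z}$ presumes $u_*=g$ on each $\Gamma_l$; the same normalization is needed again in the following proposition, where $u_*|_{\O_\rho}$ is used as a competitor in $\E_{g,\rho}$, whose members have trace $g$. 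With that hypothesis added to \eqref{eq:canonical}, your boundary bookkeeping ($c_l=0$ for all $l$) closes the proof, so your proposal is complete and correct under the reading the paper actually intends.
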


\begin{proof}
We observe that \( \curl ( j(u_*)-\nabla^\perp \Phi_0)=0\) in \(G\) and \((j(u_*)-\nabla^\perp \Phi_0)\cdot \tau=0\) on \(\p G\). We can apply Lemma \ref{lem:generalized-Poincare} to find \(H^*\) such that \eqref{eq:decompo_u_*} holds. By using that \(\dive j(u_*)=0\) in \(G\) and \((j(u_*)-\nabla^\perp \Phi_0)\cdot \tau=0\) on \(\p G\), we find that there exist constant coefficients \(\alpha_l^*\) such that \eqref{eq:H_*} holds. To express these coefficients, we multiply \eqref{eq:decompo_u_*} by \(\nabla \varphi_m\) and  integrate by parts for \(m=1,\dots,n\) with \(\varphi_m\) defined in \eqref{eq:def_varphi_l}. To see that the coefficients \(\alpha_l^*\) satisfy the quantization property, we recall from Lemma \ref{lem:existence_v_0}, that there exists \(v_0 \in \C^\infty(G \setminus \{a_1,\dots,a_k\},\mathbb{S}^1)\) such that \(j(v_0)=\nabla^\perp \Phi_0\) and \(v_0=g\) on \(\Gamma_0\). We also have that \(v_0=e^{-i\theta_l} g\) for some \(\theta_l\in [-\pi,\pi[\) for \(l=1,\dots,n\) because we have that \( \p_\nu \Phi_0=g\wedge \p_\tau g\) on \(\p G\). But we can check that \(j(v_0e^{iH_*})=j(u_*)\). Indeed
\begin{align*}
v_0e^{iH_*}\wedge \nabla (v_0e^{i H_*})&= v_0e^{iH_*} \wedge (\nabla v_0+iv_0\nabla H_*)e^{i H_*} \\
&= v_0\wedge \nabla v_0+\nabla H_*= \nabla ^\perp \Phi_0+\nabla H_*.
\end{align*}
Since we also have that \( u_*=v_0e^{iH_*}\) on \(\Gamma_0\) we necessarily find that \(u_*=v_0e^{iH_*}\) in \(G\). This implies that \(\alpha_l^*=\theta_l+2\pi \mathbb{Z}\) on each \(\Gamma_l, l=1,\dots,n.\)

\end{proof}

We now show that we can define the renormalized energy of such a map \(u_*\).

\begin{proposition}
For \(u_*\in W^{1,1}(G,\mathbb{S}^1)\) satisfying \eqref{eq:canonical}
\begin{equation}
W_g(u_*):= \lim_{\rho \to 0} \frac{1}{2}\int_{\O_\rho} |\nabla u_*|^2-\pi\left( \sum_{i=1}^k d_i^2 \right)\log \frac{1}{\rho} 
\end{equation}
exists, is finite and is equal to \eqref{eq:renormalized_Dirichlet} where the coefficients \(\alpha_l\) are replaced by \(\alpha_l^*\). Furthermore
\begin{equation}
W_g(\{a_i\},\{d_i\})=W_g(u_0)=\min\{ W_g(u_*); u_* \in W^{1,1}(G,\mathbb{S}^1) \text{ satisfies } \eqref{eq:canonical}\}.
\end{equation}
\end{proposition}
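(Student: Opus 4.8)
The plan is to use the Hodge decomposition $j(u_*)=\nabla^\perp\Phi_0+\nabla H_*$ provided by Proposition~\ref{prop:alternative_approach_1}, together with the identity $|\nabla u_*|^2=|j(u_*)|^2$, which is valid because $u_*$ is $\mathbb{S}^1$-valued. Expanding the square gives $|\nabla u_*|^2=|\nabla\Phi_0|^2+|\nabla H_*|^2+2\nabla^\perp\Phi_0\cdot\nabla H_*$, so that, after integrating over $\O_\rho$, the whole statement reduces to the asymptotic analysis of three terms as $\rho\to0$. This is formally the same computation as in the proof of Theorem~\ref{th:main1}, except that $\Phi_\rho$ and $H_\rho$ are now replaced by the $\rho$-independent objects $\Phi_0$ and $H_*$, which removes the need for any convergence argument.

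For the self-interaction term I would integrate by parts on $\O_\rho$ and insert $R_0(x)=\Phi_0(x)-\sum_i d_i\log|x-a_i|$, using the flux relations $\int_{\p B_\rho(a_i)}\p_\nu\Phi_0=2\pi d_i$ and $\p_\nu\Phi_0=g\wedge\p_\tau g$ on $\p G$. Exactly as in Lemma~\ref{lem:energy_of_Phi_rho}, this yields
\[ \tfrac12\int_{\O_\rho}|\nabla\Phi_0|^2-\pi\Big(\sum_{i}d_i^2\Big)|\log\rho|\longrightarrow \tfrac12\int_{\p G}\Phi_0(g\wedge\p_\tau g)-\pi\sum_{i\neq j}d_id_j\log|a_i-a_j|-\pi\sum_i d_iR_0(a_i). \]
Since $H_*=\sum_l\alpha_l^*\varphi_l$ is smooth up to the points $a_i$, one has $\tfrac12\int_{\O_\rho}|\nabla H_*|^2\to\tfrac12\int_G|\sum_l\alpha_l^*\nabla\varphi_l|^2$. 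For the cross term I would use $\dive(\nabla^\perp\Phi_0)=0$ to integrate by parts: the contributions on the circles $\p B_\rho(a_i)$ are $O(\rho)$ (because $\Phi_0$ is single-valued, so $\oint\p_\tau\Phi_0=0$), the $\Gamma_0$ contribution vanishes since $H_*=0$ there, and on each $\Gamma_l$ the constant $\alpha_l^*$ factors out, giving $\int_{\O_\rho}\nabla^\perp\Phi_0\cdot\nabla H_*\to\sum_l\int_{\Gamma_l}\alpha_l^*\p_\tau\Phi_0$. Adding the three limits reproduces \eqref{eq:renormalized_Dirichlet} with $\alpha_l$ replaced by $\alpha_l^*$, which proves that $W_g(u_*)$ exists, is finite, and has the announced value.

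To identify the limiting map $u_0$ of Theorem~\ref{th:main1} with a distinguished $u_*$, I would recall from Proposition~\ref{prop:conv_u_rho_22} that $u_0$ satisfies \eqref{eq:canonical} and $j(u_0)=\nabla^\perp\Phi_0+\nabla H_0$ with $H_0=\sum_l\alpha_l\varphi_l$; uniqueness of the decomposition (both $H_*$ and $H_0$ vanish on $\Gamma_0$) forces $H_*=H_0$ and $\alpha_l^*=\alpha_l$, so the formula above evaluated at $u_0$ is exactly \eqref{eq:renormalized_Dirichlet}, whence $W_g(u_0)=W_g(\{a_i\},\{d_i\})$. For the minimality I would compare any admissible $u_*$ with the original problem \eqref{eq:min_w_d_rho}. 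Since $W_g(u_*)$ and the $\alpha_l^*$ are invariant under a global phase, I may normalize $u_*=g$ on $\Gamma_0$; then the quantization $\alpha_l^*\in\theta_l+2\pi\mathbb{Z}$ of Proposition~\ref{prop:alternative_approach_1}, combined with $v_0=e^{-i\theta_l}g$ on $\Gamma_l$ (Lemma~\ref{lem:existence_v_0}) and $u_*=v_0e^{iH_*}$, forces $u_*=g$ on all of $\p G$; moreover $\deg(u_*,\p B_\rho(a_i))=\tfrac1{2\pi}\int_{B_\rho(a_i)}\curl j(u_*)=d_i$. Thus $u_*|_{\O_\rho}\in\E_{g,\rho}$, so $\tfrac12\int_{\O_\rho}|\nabla u_*|^2\ge W_g^\rho(\{a_i\},\{d_i\})$; subtracting $\pi(\sum_i d_i^2)|\log\rho|$ and letting $\rho\to0$ gives $W_g(u_*)\ge W_g(\{a_i\},\{d_i\})$, with equality attained at $u_0$.

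The one genuinely delicate point is the minimality: the inequality $W_g(u_*)\ge W_g(\{a_i\},\{d_i\})$ rests entirely on recognizing that $u_*|_{\O_\rho}$ is an admissible competitor in $\E_{g,\rho}$, and this in turn relies on the integrality $\alpha_l^*\in\theta_l+2\pi\mathbb{Z}$ — the genuinely topological feature of the multiply connected domain — which is precisely what upgrades the relation $u_*=e^{ic_l}g$ on $\Gamma_l$ (coming only from $j(u_*)\cdot\tau=g\wedge\p_\tau g$) to the full Dirichlet condition $u_*=g$. Without this quantization the restriction of $u_*$ would fail to be admissible and the comparison with $W_g^\rho$ would break down; the remaining computations are routine and mirror Lemma~\ref{lem:energy_of_Phi_rho} and the proof of Theorem~\ref{th:main1}.
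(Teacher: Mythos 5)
Your proposal is correct and takes essentially the same route as the paper: the paper's own proof establishes the formula by invoking ``the same kind of computations as in the proof of Theorem~\ref{th:main1}'' (which you simply carry out explicitly with the $\rho$-independent pair $\Phi_0$, $H_*$ from Proposition~\ref{prop:alternative_approach_1}, so no convergence argument is needed), and it proves minimality by the identical restriction-to-$\O_\rho$ comparison with $W_g^\rho(\{a_i\},\{d_i\})$, merely phrased as a contradiction rather than as your direct inequality. Your explicit verification that $u_*|_{\O_\rho}\in\E_{g,\rho}$ — via the quantization $\alpha_l^*\in\theta_l+2\pi\mathbb{Z}$ forcing $u_*=g$ on all of $\p G$ after a phase normalization, and $\deg(u_*,\p B_\rho(a_i))=d_i$ from $\curl j(u_*)$ — is a detail the paper's proof uses implicitly, not a different method.
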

\begin{proof}
The same kind of computations as in the proof of Theorem \ref{th:main1} show that \(W_g(u_*)<+\infty\) for every \(u_*\in W^{1,1}(G,\mathbb{S}^1)\) satisfying \eqref{eq:canonical} and give an expression of this quantity similar to \eqref{eq:renormalized_Dirichlet}.
Now, since \(u_0\) satisfies \eqref{eq:canonical} we have that \[W_g(u_0) \geq \inf \{ W(u_*); u_* \in W^{1,1}(G,\mathbb{S}^1) \text{ satisfies } \eqref{eq:canonical} \}.\] If there exists \(u_*\in W^{1,1}(G,\mathbb{S}^1)\) satisfying \eqref{eq:canonical} such that \(W_g(u_*)<W_g(u_0)\), then we can set \(u_{*,\rho}:={u_*}_{| \O_\rho}\). For \(\rho\) small enough we have \(W_g^\rho(u_{*,\rho})-\pi \left(\sum_{i=1}^k d_i^2 \right)|\log \rho | <W_g(u_0)\). But, this implies that, for \(\rho\) small enough,
\begin{equation}\nonumber
W_g^\rho(\{a_i\},\{d_i\})-\pi \left(\sum_{i=1}^k d_i^2 \right)|\log \rho | \leq W_g^\rho(u_{*,\rho})-\pi \left(\sum_{i=1}^k d_i^2 \right)|\log \rho | <W_g(u_0). 
\end{equation}
Passing to the limit as \(\rho \to 0\) in the previous equation we obtain \(W_g(u_0)\leq W_g(u_*)<W_g(u_0)\) which is a contradiction.
\end{proof}
\subsection{Neumann boundary conditions}
Analogous results can be stated  for homogeneous Neumann boundary conditions and we leave it to the reader.

\section*{Appendix}

We recall here two lemmas that we use in the proofs of the main results. For the proofs of these lemmas we refer to \cite[Lemma I.3-I.4]{BBH_1994}.

\begin{lemma} \label{lem:elliptic_estimate_1}
Let \(G \subset \R^2\) be a smooth bounded domain, let \(U_i\), \(i=1, \dots,k\) be smooth subdomains of \(G\),  such that \( \O:= G \setminus \cup_{i=1}^k \overline{U}_i\) is connected. Let \(v\) be a function satisfying
\begin{equation}
\left\{
\begin{array}{rcll}
\Delta v&=&0 & \text{ in } \O, \\
\int_{\p U_i} \p_\nu v &=&0 & \text { for each } i=1, \dots,k, \\
\p_\nu v&=&0 & \text{ on } \p G.
\end{array}
\right.
\end{equation}
Then
\begin{equation}
\sup_\Omega v-\inf_\O v \leq \sum_{i=1}^k \left( \sup_{\p U_i} v- \inf_{\p U_i} v \right).
\end{equation}
\end{lemma}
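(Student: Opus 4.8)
The plan is to combine the maximum principle with the zero-flux hypotheses through a level-set (layer-cake) argument. Write $M_i := \sup_{\partial U_i} v$, $m_i := \inf_{\partial U_i} v$, $M := \max_i M_i$ and $m := \min_i m_i$. First I would record that, since $v$ is harmonic and $\partial_\nu v = 0$ on $\partial G$, the extrema of $v$ over $\overline{\Omega}$ cannot be attained on $\partial G$ unless $v$ is constant: at a boundary maximum lying on $\partial G$, Hopf's lemma would force $\partial_\nu v > 0$, contradicting the Neumann condition, and symmetrically for a minimum. Hence $\sup_\Omega v = M$ and $\inf_\Omega v = m$, so it suffices to prove $M - m \le \sum_{i=1}^k (M_i - m_i)$. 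Note that this maximum-principle step alone only gives $M-m$ in terms of a single global max and min, and does not see the sum; the flux hypotheses are what will be responsible for the full sum.

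Next I would reduce the claim to a counting statement via a layer-cake identity. Since $m \le m_i \le M_i \le M$, we have $M_i - m_i = \int_m^M \mathbf{1}_{[m_i,M_i]}(t)\,dt$, and summing over $i$ gives $\sum_{i=1}^k (M_i - m_i) = \int_m^M N(t)\,dt$, where $N(t) := \#\{\, i : m_i \le t \le M_i \,\}$ counts the holes whose boundary values straddle the level $t$. As $M - m = \int_m^M 1\,dt$, the desired inequality follows once I show that $N(t) \ge 1$ for almost every $t \in (m,M)$, i.e.\ that for a.e.\ such $t$ at least one curve $\partial U_i$ meets the level set $\{v = t\}$.

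This is the heart of the matter, and it is where the conditions $\int_{\partial U_i}\partial_\nu v = 0$ enter. Fix a regular value $t \in (m,M)$ of $v$ (a.e.\ $t$ is regular, by Sard's theorem applied to the smooth function $v$). Suppose for contradiction that no $\partial U_i$ meets $\{v=t\}$; then each hole lies entirely in $\{v>t\}$ or entirely in $\{v<t\}$. Because $t < M = \max_i M_i$, some hole lies in $\{v>t\}$, and because $t > m = \min_i m_i$, some hole lies in $\{v<t\}$; since $\Omega$ is connected and $v$ is continuous, the level set $\{v=t\}$ is then nonempty. Now apply the divergence theorem to the harmonic function $v$ on the superlevel set $\Omega_t^+ := \{v>t\}$, whose boundary splits into the holes $\partial U_i$ contained in $\{v>t\}$ (each contributing $\int_{\partial U_i}\partial_\nu v = 0$ by hypothesis, the outer normal of $\Omega_t^+$ there coinciding with $\nu$), the portion of $\partial G$ on which $v>t$ (contributing $0$, as $\partial_\nu v = 0$ there), and the smooth level curve $\{v=t\}$. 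This forces $\int_{\{v=t\}} \partial_n v = 0$, where $n$ is the outer normal of $\Omega_t^+$. But on a regular level curve $\nabla v \neq 0$ and $n$ points toward decreasing $v$, so $\partial_n v < 0$ everywhere on $\{v=t\}$, whence $\int_{\{v=t\}}\partial_n v < 0$, a contradiction. Therefore some $\partial U_i$ meets $\{v=t\}$, and $N(t)\ge 1$.

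Integrating $N(t)\ge 1$ over $(m,M)$ yields $M - m \le \int_m^M N(t)\,dt = \sum_{i=1}^k (M_i - m_i)$, which is the assertion. The step I expect to be the main obstacle, and the one to write carefully, is the flux accounting in the third paragraph: one must pass to a regular value via Sard so that $\{v=t\}$ is a genuine smooth $1$-manifold on which the divergence theorem is legitimate, and one must observe that the vanishing-flux identity is used at full strength precisely for those holes lying entirely inside $\{v>t\}$ — which is exactly the configuration manufactured by the assumption that no hole meets the level set. The asymmetric roles of $\partial G$ (pointwise Neumann) and of the $\partial U_i$ (only \emph{total} flux zero) are genuinely different and must be tracked separately throughout.
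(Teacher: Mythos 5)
Your proof is correct and follows essentially the same route as the paper's, which simply cites \cite[Lemma I.3]{BBH_1994}: the classical BBH argument is precisely this level-set flux computation showing that a.e.\ $t\in(m,M)$ lies in some $[m_i,M_i]$, combined with the Hopf-lemma step locating the extrema on $\cup_i \p U_i$. The only detail worth adding in a careful write-up is to choose $t$ a regular value of $v_{|\p G}$ as well (Sard applied to the boundary restriction), so that the level curve $\{v=t\}$ meets $\p G$ transversally -- automatic here since $\p_\nu v=0$ forces $\nabla v$ to be tangential there -- and the divergence theorem on $\{v>t\}$ is legitimate.
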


\begin{lemma} \label{lem:elliptic_estimate_2}
Let \(G \subset \R^2\) be a smooth bounded domain, let \(U_i\), \(i=1, \dots,k\) be smooth bounded subdomains of \(G\),  such that \( \O:= G \setminus \cup_{i=1}^k \overline{U}_i\) is connected. Let \(v\) be a function satisfying
\begin{equation}
\left\{
\begin{array}{rcll}
\Delta v&=&0 & \text{ in } \O, \\
\int_{\p U_i} \p_\nu v &=&0 & \text { for each } i=1, \dots,k.
\end{array}
\right.
\end{equation}
Then
\begin{equation}
\sup_\Omega v-\inf_\O v \leq \sum_{i=1}^k \left( \sup_{\p U_i} v- \inf_{\p U_i} v \right)+ \sup_{\p G} v- \inf_{\p G} v.
\end{equation}
\end{lemma}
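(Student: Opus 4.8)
The plan is to reduce the statement to a covering property of real intervals and then to exploit the flux conditions through a super-level-set argument. Denote the boundary components of \(\O\) by \(S_0=\p G\) and \(S_i=\p U_i\) for \(i=1,\dots,k\), and set \(M_\alpha=\sup_{S_\alpha}v\), \(m_\alpha=\inf_{S_\alpha}v\) for \(\alpha=0,1,\dots,k\). Since \(v\) is harmonic and \(\O\) is connected, the maximum principle gives \(\sup_\O v=M:=\max_\alpha M_\alpha\) and \(\inf_\O v=m:=\min_\alpha m_\alpha\), so the left-hand side equals \(M-m\). First I would observe that \emph{all} boundary components carry zero flux: the inner ones by hypothesis, and since \(0=\int_\O \Delta v=\sum_{\alpha=0}^k\int_{S_\alpha}\p_\nu v\) by the divergence theorem, the outer one as well.

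The key reduction is the following: it suffices to prove that the closed intervals \(I_\alpha:=[m_\alpha,M_\alpha]\) cover \([m,M]\). Indeed, by subadditivity of Lebesgue measure, \(\sum_\alpha(M_\alpha-m_\alpha)=\sum_\alpha|I_\alpha|\geq\bigl|\bigcup_\alpha I_\alpha\bigr|\geq M-m\), which is exactly the claimed inequality once we recall that \(\sum_{\alpha=0}^k(M_\alpha-m_\alpha)\) is precisely the right-hand side \(\sum_{i=1}^k(\sup_{\p U_i}v-\inf_{\p U_i}v)+(\sup_{\p G}v-\inf_{\p G}v)\).

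To prove the covering I would argue by contradiction: suppose some regular value \(c\in(m,M)\) of \(v\) lies in no \(I_\alpha\). Then each \(S_\alpha\) satisfies either \(v>c\) on all of \(S_\alpha\) (call this set of indices \(P\)) or \(v<c\) on all of \(S_\alpha\) (the set \(N\)); since \(m<c<M\), both \(P\) and \(N\) are nonempty. Consider the open set \(W:=\{x\in\O:v(x)>c\}\), whose boundary consists of the components \(S_\alpha\) with \(\alpha\in P\) together with the level set \(\{v=c\}\cap\O\). Applying the divergence theorem to \(\Delta v=0\) on \(W\), and using that on each such \(S_\alpha\) the outer normal of \(W\) coincides with \(\nu\) so that \(\int_{S_\alpha}\p_\nu v=0\), I would obtain
\[
0=\int_W\Delta v=\sum_{\alpha\in P}\int_{S_\alpha}\p_\nu v+\int_{\{v=c\}\cap\O}\p_n v=\int_{\{v=c\}\cap\O}(-|\nabla v|)\dif s,
\]
where \(n\) is the outer normal of \(W\), equal to \(-\nabla v/|\nabla v|\) along the level set. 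Because \(\O\) is connected and \(W\) is a nonempty proper subset (it meets a neighbourhood of the components in \(P\) but not those in \(N\)), the level set \(\{v=c\}\cap\O\) is nonempty, and for a regular value \(|\nabla v|>0\) there; hence the last integral is strictly negative, a contradiction. This forces the intervals \(I_\alpha\) to cover \([m,M]\) and completes the argument.

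The main obstacle I anticipate is the measure-theoretic and regularity bookkeeping rather than the idea: one must justify the use of the divergence theorem on the super-level set for almost every \(c\) (Sard's theorem provides regular values, for which \(\{v=c\}\cap\O\) is a smooth \(1\)-manifold with \(|\nabla v|\neq 0\)), and then pass to a limit in \(c\) to recover the covering for the closed intervals. A further point to check is the orientation of the normals, so that the hypothesis \(\int_{S_\alpha}\p_\nu v=0\) is applied with the correct sign. The Neumann variant (Lemma~\ref{lem:elliptic_estimate_1}) follows by the same scheme, the only difference being that the part of \(\p G\) meeting \(W\) contributes nothing because \(\p_\nu v=0\) there, which removes \(\p G\) from the covering and hence from the bound.
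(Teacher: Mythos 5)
Your proposal is correct and follows essentially the same route as the proof the paper invokes by reference (Lemma I.4 in Bethuel--Brezis--H\'elein): pick a Sard-regular value \(c\) in a gap of the boundary oscillation intervals, apply the divergence theorem on the super-level set \(\{v>c\}\) using the zero-flux conditions (including the flux across \(\p G\), which you rightly derive from \(\int_\O \Delta v=0\)), and get a contradiction from the strict sign of \(\p_n v\) on the nonempty level curve, with measure subadditivity converting the covering of \([m,M]\) into the stated bound. The bookkeeping points you flag (Sard for regular values, orientation of normals, passing from almost-all \(c\) to the covering) are exactly the standard details of that argument and are handled correctly.
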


\bibliographystyle{abbrv}
\bibliography{biblio}
\end{document}